\documentclass[11pt]{article}
\usepackage{comment}
\usepackage{amsthm,amstext,amssymb,amsmath,graphicx,amsfonts,caption,subcaption,multirow,epigraph,mathrsfs}
\usepackage[all]{xy}
\usepackage[utf8]{inputenc}
\usepackage{booktabs}
\usepackage{tikz}
\usetikzlibrary{matrix}
\usetikzlibrary{arrows,topaths}
\usepackage{placeins}
\usepackage{mathrsfs}
\usepackage{multicol}
\usepackage{natbib}
\usepackage[pagebackref=false,colorlinks,linkcolor=blue,citecolor=blue]{hyperref}
\usepackage{tikz-cd}
\usepackage{xcolor,soul}
\setlength\epigraphwidth{.5\textwidth}

\theoremstyle{plain}
\newtheorem{theorem}{Theorem}[section]
\newtheorem{lemma}[theorem]{Lemma}
\newtheorem{proposition}[theorem]{Proposition}

\theoremstyle{definition}
\newtheorem{definition}[theorem]{Definition}
\newtheorem{corollary}[theorem]{Corollary}
\newtheorem{example}[theorem]{Example}

\theoremstyle{remark}
\newtheorem{remark}{\sc Remark}
\makeatother
\makeatletter
\def\namedlabel#1#2{\begingroup
   \def\@currentlabel{#2}%
   \label{#1}\endgroup}
\makeatother
\date{}
\title{\bf  \'{E}tal\'{e} spaces of residuated lattices}\vspace{.25 in}
\author{ \vspace{.25 in} {\bf Saeed Rasouli$^{1*}$} and {\bf Seyed Naser Hosseini$^2$} and {\bf Amin Dehghani$^3$}\\
$^1,^3$Department of Mathematics, Persian Gulf University, Bushehr, Iran\\
$^2$Department of mathematics, Shahid Bahonar University, Kerman,Iran\\
{\tt $^1$srasouli@pgu.ac.ir }\\
{\tt $^2$nhoseini@uk.ac.ir }\\
{\tt $^3$dehghany.amin@hotmail.com}\\}


\def\rarm#1#2#3{$\xymatrix{#1 : #2 \ar@{ >->}[r] & #3}$}
\def\inc#1#2#3{$\xymatrix{#1: #2 \ar@{^(->}[r] & #3}$}

\def\rarmt#1#2#3{$\xymatrix{#1 \ar@{ >->}[r]^{#2} & #3}$}

\def\rars#1#2#3{$\xymatrix{#1 \ar@{|->}[r]^{#2} & #3}$}
%
\def\triangle#1#2#3#4#5#6{%
$\xymatrix{& #1\ar@{ >->}[d]^{#5} \\ #2\ar[ur]^{#4} \ar[r]_{#6}& #3}$
}%
\def\square#1#2#3#4#5#6#7#8#9{%
$\xymatrix{#1 \ar[rr]^{#5}\ar[d]_{#6} \ar@{}[rd]|{\hs{16}#9} && #2 \ar[d]^{#7} \crcr #3 \ar[rr]_{#8} && #4}$}
\def\,{\hskip-1mm ,}
\def\.{\hskip-1mm .}

\def\dblp#1{(\hspace{-0.7mm}( #1 )\hspace{-0.7mm})}

 \begin{document}
 \maketitle
 \begin{abstract}
This paper explores the concept of \'{e}tal\'{e} spaces associated with residuated lattices. Notions of bundles and \'{e}tal\'{e}s of residuated lattices over a given topological space are introduced and investigated. For a topological space $\mathscr{B}$, we establish that the category of \'{e}tal\'{e}s of residuated lattices over $\mathscr{B}$ with morphisms of \'{e}tal\'{e}s of residuated lattices is coreflective in the category of bundles of residuated lattices over $\mathscr{B}$ along with morphisms of bundles of residuated lattices. We provide a method for transferring an \'{e}tal\'{e} of residuated lattices over a topological space to another, utilizing a continuous map. Finally, we define a contravariant functor, called the section functor, from the category of \'{e}tal\'{e}s of residuated lattices with inverse morphisms to the category of residuated lattices.
\footnote{2020 Mathematics Subject Classification: 06F99,55N30  \\
{\it Key words and phrases}: sheaf of residuated lattices; \'{e}tal\'{e} space of residuated lattice; bundle of residuated lattice.\\
$^*$: Corresponding author}
\end{abstract}
\section{Introduction}

In the realm of information processing, particularly inferences derived from certain information, classical logic serves as a foundational system. However, to address the challenges posed by uncertainty and fuzzy information, diverse non-classical logic systems have been extensively proposed and researched \citep{turunen1999mathematics,galatos2007residuated,hajek2013metamathematics}. These non-classical logics have proven indispensable in computer science, offering formal and effective tools for managing uncertain and fuzzy data.

Several logical algebras have been introduced as semantical frameworks to support non-classical logic systems. Examples of such algebras include BCK-algebras, residuated lattices, divisible residuated lattices, MTL-algebras, Girard monoids, BL-algebras, G\"{o}del algebras, MV-algebras, and Heyting algebras. Residuated lattices, in particular, are fundamental and crucial algebraic structures that play an essential role in the theory of fuzzy logic. In the context of residuated lattices, there is an additional binary operation $\to$ (often called the \textit{residual} or \textit{implication}) that relates to the lattice structure. This operation in a residuated lattice aligns with the idea of implication in fuzzy logic, where the degree of truth of an implication is related to the degree of membership of the antecedent and consequent. Indeed, the connection between residuated lattices and fuzzy sets is based on their ability to represent and manage uncertainty and reasoning.

Residuated lattices, on the other hand, are not merely important from a logical standpoint; they also hold intrigue from an algebraic perspective, boasting noteworthy properties. Their versatility and foundational nature make them pivotal in reasoning about uncertain information, contributing significantly to both the logical and algebraic aspects of fuzzy logic theory.

This paper delves into the exploration of \'{e}tal\'{e} spaces associated with residuated lattices. Historically, the foundational developments leading to the introduction of \'{e}tal\'{e} spaces can be traced back to the seminal contributions of the S\'{e}minaire Henri Cartan (1948-1964). In collaboration with eminent mathematicians such as Jean-Pierre Serre and Jean Leray, Henri Cartan played a pivotal role in pioneering innovative concepts and techniques in topology and algebraic geometry throughout this seminar. To be precise, it can be said that the notion of an \textit{``\'{e}tal\'{e} space" or ``Espace \'{e}tal\'{e}}" emanating from the realm of algebraic geometry, was born by Henri Cartan, commonly ascribed to his collaborative efforts with Michel Lazard, exactly in Expos\'{e} 14 of this seminar, under the term ``faisceau" \citep{cartan1950faisceaux}. According to \citet[p. 7]{Gray1979}, Lazzard's faisceau on a regular topological space $X$ is defined by a map $p$ from a set $F$ to $X$, satisfying two conditions:
\begin{enumerate}
  \item for each $x\in X$, $p^{-1}(x)=F_{x}$, is a $K$-module;
  \item $F$ has a (non-separated) topology such that the algebraic operations of $F$ are continuous, and $p$ is a local homeomorphism.
\end{enumerate}

Roger Godement, a notable participant in Henri Cartan's S\'{e}minaire, is credited with introducing the term ``Espace \'{e}tal\'{e}" instead of ``faisceau" in his influential work \citep{godement1958topologie}. His elegant contribution encompasses a comprehensive survey of \'{e}tal\'{e} spaces. Godement's exposition played a pivotal role in establishing the formalization and examination of étalé spaces, exerting a substantial influence on the evolution of algebraic geometry.

The notion of \'{e}tal\'{e} spaces has been extensively studied during past decades, see e.g., \citep{grothendieck1960elements,Tennison1976,Bredon1997,maclane2012sheaves,brown2020sheaf}, and \citep{rosiak2022sheaf}. For more historical notes about \'{e}tal\'{e} spaces, the interested reader can be referred to \citep{Gray1979,fasanelli1981creation,miller2000leray}.

Over the years, \'{e}tal\'{e} spaces have evolved into a fundamental component of modern algebraic geometry, playing a pivotal role in the formulation of schemes—an essential concept in Grothendieck's reformulation of algebraic geometry. Presently, the theory of \'{e}tal\'{e} spaces remains a dynamic and fertile area of research, establishing crucial connections between algebraic geometry and topology, and catalyzing advancements across diverse mathematical disciplines.

The influence of \'{e}tal\'{e} spaces has extended beyond their original domain in algebraic geometry, reaching into various branches of mathematics, including number theory and representation theory. Furthermore, the integration of \'{e}tal\'{e} spaces into Fuzzy Sheaf Theory has proven to be particularly fruitful, offering valuable insights into challenges related to spatial reasoning, knowledge representation, and fuzzy image analysis.

Inspired by the foundational notion of \'{e}tal\'{e} spaces, numerous authors have proposed analogous concepts under different names for various structures throughout the years. Examples include sheaves of lattices \citep{brezuleanu1969duale}, sheaves of rings, $R$-modules, and abelian groups \citep{hofmann1972representations}, sheaf spaces of universal algebras \citep{davey1973sheaf}, sheaves of chains over Boolean spaces \citep{cignoli1978lattice}, sheaves of normal lattices \citep{georgescu1987isomorphic}, sheaves of MV-algebras \citep{filipoiu1995compact}, sheaves of BL-algebras \citep{di2003compact}, sheaves of Banach spaces \citep{hofmann2006sheaf}, among others. This diversity of applications reflects the enduring impact and versatility of the \'{e}tal\'{e} spaces concept across various mathematical structures and frameworks.

The above discussion motivates us to introduce and investigate the notion of \'{e}tal\'{e} spaces of residuated lattices in a similar fashion to \'{e}tal\'{e} spaces of rings and modules. This motivation is to find a common pattern for some facts about \'{e}tal\'{e} spaces of rings and \'{e}tal\'{e} spaces of residuated lattices which occur in a much similar way in both cases of rings and of residuated lattices. Also, as known, \'{e}tal\'{e} spaces arise from the theory of sheaves, a foundational framework that has significantly advanced our understanding of cohomology and the topology of algebraic varieties. While the majority of the aforementioned explorations delve into \'{e}tal\'{e} spaces through the lens of sheaves, our approach takes a different approach by avoiding their use.

This paper is organized into five sections as follows: In Sec. \ref{sec2}, we revisit essential definitions, properties, and results related to residuated lattices, laying the groundwork for subsequent discussions. Sec. \ref{sec3} delves into the concept of bundles and \'{e}tal\'{e}s of residuated lattices over a given topological space, recalling specific definitions of \'{e}tal\'{e}s of sets that will be utilized in our exploration. For a topological space $\mathscr{B}$, we establish that the category of \'{e}tal\'{e}s of residuated lattices over $\mathscr{B}$ with morphisms of \'{e}tal\'{e}s of residuated lattices, denoted as $\textbf{RL-Etale}(\mathscr{B})$, forms a full subcategory of bundles of residuated lattices over $\mathscr{B}$ along with morphisms of bundles of residuated lattices, denoted as $\textbf{RL-Bundle}(\mathscr{B})$ (Corollary \ref{rletafulsubcatrlbund}). Sect. \ref{sec4} investigates the interrelation between bundles and \'{e}tal\'{e}s of residuated lattices. As a main result, it sharpens the previous result and demonstrates that given a topological space $\mathscr{B}$, $\textbf{RL-Etale}(\mathscr{B})$ is coreflective in $\textbf{RL-Bundle}(\mathscr{B})$ (Theorem \ref{rletacorefrlbund}). Sect. \ref{sec5} provides a method for transferring an \'{e}tal\'{e} from one topological space to another, utilizing a continuous map. At the end of this section, we define a contravariant functor, called the section functor, from the category of \'{e}tal\'{e}s of residuated lattices with inverse morphisms to the category of residuated lattices (Theorem \ref{maintheoremofart}).

\section{Residuated lattices}\label{sec2}
%
%

In this section, we revisit essential definitions, properties, and results related to residuated lattices, providing a foundation for subsequent discussions.

An algebra $\mathfrak{A} = (A; \vee, \wedge, \odot, \rightarrow, 0, 1)$ is termed a residuated lattice if $\ell(\mathfrak{A}) = (A; \vee, \wedge, 0, 1)$ is a bounded lattice, $(A; \odot, 1)$ is a commutative monoid, and $(\odot, \rightarrow)$ forms an adjoint pair. A residuated lattice $\mathfrak{A}$ is considered non-degenerate if $0 \neq 1$. For a residuated lattice $\mathfrak{A}$ and $a \in A$, we define $\neg a := a \rightarrow 0$, and $a^n := a \odot \ldots \odot a$ ($n$ times) for every integer $n$. The class of residuated lattices, denoted as \textbf{RL}, is equational and, therefore, a variety. For an in-depth exploration of residuated lattices, interested readers are directed to \cite{galatos2007residuated}.

\begin{example}\label{exa4}
Consider the lattice $A_4 = \{0, a, b, 1\}$ with the Hasse diagram shown in Figure \ref{figa4}. Routine calculations reveal that $\mathfrak{A}_4 = (A_4; \vee, \wedge, \odot, \rightarrow, 0, 1)$ is a residuated lattice. The commutative operation $\odot$ is defined by Table \ref{taba4}, and the operation $\rightarrow$ is articulated by $x \rightarrow y = \bigvee \{z \in A_4 \mid x \odot z \leq y\}$, for any $x, y \in A_4$.
\FloatBarrier
\begin{table}[ht]
\begin{minipage}[b]{0.56\linewidth}
\centering
\begin{tabular}{ccccccc}
\hline
$\odot$ & 0 & a & b & 1 \\ \hline
0       & 0 & 0 & 0 & 0 \\
        & a & a & 0 & a \\
        &   & b & b & b \\
        &   &   & 1 & 1 \\ \hline
\end{tabular}
\caption{Cayley table for ``$\odot$" of $\mathfrak{A}_4$}
\label{taba4}
\end{minipage}\hfill
\begin{minipage}[b]{0.6\linewidth}
\centering
  \begin{tikzpicture}[>=stealth',semithick,auto]
    \tikzstyle{subj} = [circle, minimum width=6pt, fill, inner sep=0pt]
    \tikzstyle{obj}  = [circle, minimum width=6pt, draw, inner sep=0pt]

    \tikzstyle{every label}=[font=\bfseries]

    \node[subj,  label=below:0] (0) at (0,0) {};
    \node[subj,  label=below:a] (a) at (-1,1) {};
    \node[subj,  label=below:b] (b) at (1,1) {};
    \node[subj,  label=right:1] (1) at (0,2) {};

    \path[-]   (0)    edge                node{}      (a);
    \path[-]   (0)    edge                node{}      (b);
    \path[-]   (a)    edge                node{}      (1);
    \path[-]   (b)    edge                node{}      (1);
\end{tikzpicture}
\captionof{figure}{Hasse diagram of $\mathfrak{A}_{4}$}
\label{figa4}
\end{minipage}
\end{table}
\FloatBarrier
\end{example}
\begin{example}\label{exa6}
Consider the lattice $A_6=\{0,a,b,c,d,1\}$ with the Hasse diagram shown in Figure \ref{figa6}. Routine calculations reveal that $\mathfrak{A}_6=(A_6;\vee,\wedge,\odot,\rightarrow,0,1)$ is a residuated lattice. The commutative operation $\odot$ is defined by Table \ref{taba6}, and the operation $\rightarrow$ is articulated by $x \rightarrow y = \bigvee \{z \in A_6 \mid x \odot z \leq y\}$, for any $x, y \in A_6$.
\FloatBarrier
\begin{table}[ht]
\begin{minipage}[b]{0.56\linewidth}
\centering
\begin{tabular}{ccccccc}
\hline
$\odot$ & 0 & a & b & c & d & 1 \\ \hline
0       & 0 & 0 & 0 & 0 & 0 & 0 \\
        & a & a & a & 0 & a & a \\
        &   & b & a & 0 & a & b \\
        &   &   & c & c & c & c \\
        &   &   &   & d & d & d \\
        &   &   &   &   & 1 & 1 \\ \hline
\end{tabular}
\caption{Cayley table for ``$\odot$" of $\mathfrak{A}_6$}
\label{taba6}
\end{minipage}\hfill
\begin{minipage}[b]{0.6\linewidth}
\centering
  \begin{tikzpicture}[>=stealth',semithick,auto]
    \tikzstyle{subj} = [circle, minimum width=6pt, fill, inner sep=0pt]
    \tikzstyle{obj}  = [circle, minimum width=6pt, draw, inner sep=0pt]

    \tikzstyle{every label}=[font=\bfseries]

    \node[subj,  label=below:0] (0) at (0,0) {};
    \node[subj,  label=below:c] (c) at (-1,1) {};
    \node[subj,  label=below:a] (a) at (1,.5) {};
    \node[subj,  label=below right:b] (b) at (1,1.5) {};
    \node[subj,  label=below:d] (d) at (0,2) {};
    \node[subj,  label=below right:1] (1) at (0,3) {};

    \path[-]   (0)    edge                node{}      (a);
    \path[-]   (a)    edge                node{}      (b);
    \path[-]   (0)    edge                node{}      (c);
    \path[-]   (c)    edge                node{}      (d);
    \path[-]   (b)    edge                node{}      (d);
    \path[-]   (d)    edge                node{}      (1);
\end{tikzpicture}
\captionof{figure}{Hasse diagram of $\mathfrak{A}_{6}$}
\label{figa6}
\end{minipage}
\end{table}
\FloatBarrier
\end{example}
\begin{example}\label{exa8}
Consider the lattice $A_8=\{0,a,b,c,d,e,f,1\}$ with the Hasse diagram shown in Figure \ref{figa8}. Routine calculations reveal that $\mathfrak{A}_8=(A_8;\vee,\wedge,\odot,\rightarrow,0,1)$ is a residuated lattice. The commutative operation $\odot$ is defined by Table \ref{taba8}, and the operation $\rightarrow$ is articulated by $x \rightarrow y = \bigvee \{z \in A_8 \mid x \odot z \leq y\}$, for any $x, y \in A_8$.
\FloatBarrier
\begin{table}[ht]
\begin{minipage}[b]{0.56\linewidth}
\centering
\begin{tabular}{ccccccccc}
\hline
$\odot$ & 0 & a & b & c & d & e & f & 1 \\ \hline
0       & 0 & 0 & 0 & 0 & 0 & 0 & 0 & 0 \\
        & a & 0 & a & a & a & a & a & a \\
        &   & b & 0 & 0 & 0 & 0 & b & b \\
        &   &   & c & c & a & c & a & c \\
        &   &   &   & d & a & a & d & d \\
        &   &   &   &   & e & c & d & e \\
        &   &   &   &   &   & f & f & f \\
        &   &   &   &   &   &   & 1 & 1 \\ \hline
\end{tabular}
\caption{Cayley table for ``$\odot$" of $\mathfrak{A}_8$}
\label{taba8}
\end{minipage}\hfill
\begin{minipage}[b]{0.6\linewidth}
\centering
  \begin{tikzpicture}[>=stealth',semithick,auto]
    \tikzstyle{subj} = [circle, minimum width=6pt, fill, inner sep=0pt]
    \tikzstyle{obj}  = [circle, minimum width=6pt, draw, inner sep=0pt]

    \tikzstyle{every label}=[font=\bfseries]

    \node[subj,  label=below:0] (0) at (0,0) {};
    \node[subj,  label=below:a] (a) at (-1,1) {};
    \node[subj,  label=below:b] (b) at (1,1) {};
    \node[subj,  label=below:c] (c) at (-2,2) {};
    \node[subj,  label=below:d] (d) at (0,2) {};
    \node[subj,  label=below:e] (e) at (-1,3) {};
    \node[subj,  label=below:f] (f) at (1,3) {};
    \node[subj,  label=below:1] (1) at (0,4) {};

    \path[-]   (0)    edge                node{}      (a);
    \path[-]   (0)    edge                node{}      (b);
    \path[-]   (b)    edge                node{}      (d);
    \path[-]   (d)    edge                node{}      (f);
    \path[-]   (f)    edge                node{}      (1);
    \path[-]   (a)    edge                node{}      (d);
    \path[-]   (a)    edge                node{}      (c);
    \path[-]   (c)    edge                node{}      (e);
    \path[-]   (d)    edge                node{}      (e);
    \path[-]   (e)    edge                node{}      (1);
\end{tikzpicture}
\captionof{figure}{Hasse diagram of $\mathfrak{A}_{8}$}
\label{figa8}
\end{minipage}
\end{table}
\FloatBarrier
\end{example}
\begin{example}\label{lukas}
Let $n$ be a fixed natural number. By \citet[p. 11, Example 2]{turunen1999mathematics} follows that $\L=([0,1];\max,\min,\odot,\rightarrow,0,1)$ is a residuated lattice, called \textit{generalized {\L}ukasiewicz structure}, in which the commutative operation $``\odot"$ is given by $x\odot y=(\max\{0,x^n+y^n-1\})^{\frac{1}{n}}$, and the operation $``\rightarrow"$ is given by $x\rightarrow y=\min\{1,(1-x^n+y^{n})^{\frac{1}{n}}\}$, for all $x,y\in [0,1]$.
\end{example}

Let $\mathfrak{A}$ be a residuated lattice. A non-void subset $F$ of $A$ is called a \textit{filter} of $\mathfrak{A}$ provided that $x,y\in F$ implies $x\odot y\in F$, and $x\vee y\in F$, for any $x\in F$ and $y\in A$. The set of filters of $\mathfrak{A}$ is denoted by $\mathscr{F}(\mathfrak{A})$. A filter $F$ of $\mathfrak{A}$ is called \textit{proper} if $F\neq A$. Clearly, $F$ is proper iff $0\notin F$. For any subset $X$ of $A$, the \textit{filter of $\mathfrak{A}$ generated by $X$} is denoted by $\mathscr{F}(X)$. For each $x\in A$, the filter generated by $\{x\}$ is denoted by $\mathscr{F}(x)$ and said to be \textit{principal}. The set of principal filters is denoted by $\mathscr{PF}(\mathfrak{A})$. Following \citet[\S 5.7]{gratzer2011lattice}, a join-complete lattice $\mathfrak{A}$, is called a \textit{frame} if it satisfies the join infinite distributive law (JID), i.e., for any $a\in A$ and $S\subseteq A$, $a\wedge \bigvee S=\bigvee \{a\wedge s\mid s\in S\}$.  According to \cite{galatos2007residuated}, $(\mathscr{F}(\mathfrak{A});\cap,\veebar,\textbf{1},A)$ is a frame in which $\veebar \mathcal{F}=\mathscr{F}(\cup \mathcal{F})$, for any $\mathcal{F}\subseteq \mathscr{F}(\mathfrak{A})$. With any filter $F$ of a residuated lattice $\mathfrak{A}$, we can associate a binary relation $\equiv_{F}$ on $A$ as follows; $x\equiv_{F} y$ if and only if $x\rightarrow y,y\rightarrow x\in F$, for any $x,y\in A$. The binary relation $\equiv_{F}$ is a congruence on $\mathfrak{A}$, and called \textit{the congruence induced by $F$ on $\mathfrak{A}$}. As usual, the set of all congruences on $\mathfrak{A}$ shall be denoted by $Con(\mathfrak{A})$. It is well-known that $\phi:\mathscr{F}(\mathfrak{A})\longrightarrow Con(\mathfrak{A})$, defined by $\phi(F)=\equiv_F$, is a lattice isomorphism, and this implies that $\mathbf{RL}$ is an ideal determined variety. For a filter $F$ of a residuated lattice $\mathfrak{A}$, the quotient set $A/\equiv_{F}$ with the natural operations becomes a residuated lattice which is denoted by $\mathfrak{A}/F$. Also, for any $a\in A$, the equivalence classes $a/\equiv_{F}$ is denoted by $a/F$.

\begin{example}\label{filterexa}
Consider the residuated lattice $\mathfrak{A}_4$ from Example \ref{exa4}, the residuated lattice $\mathfrak{A}_6$ from Example \ref{exa6}, and the residuated lattice $\mathfrak{A}_8$ from Example \ref{exa8}. The sets of their filters are presented in Table \ref{tafiex}.
\begin{table}[h]
\centering
\begin{tabular}{ccl}
\hline
                 & \multicolumn{2}{c}{Filters}                                       \\ \hline
$\mathfrak{A}_4$ & \multicolumn{2}{c}{$F_{1}=\{1\},F_{2}=\{a,1\},F_{3}=\{b,1\},F_{4}=A_4$} \\
$\mathfrak{A}_6$ & \multicolumn{2}{c}{$F_{1}=\{1\},F_{2}=\{a,b,d,1\},F_{3}=\{c,d,1\},F_{4}=\{d,1\},F_{5}=A_6$} \\
$\mathfrak{A}_8$ & \multicolumn{2}{c}{$F_{1}=\{1\},F_{2}=\{a,c,d,e,f,1\},F_{3}=\{c,e,1\},F_{4}=\{f,1\},F_{5}=A_8$} \\ \hline
\end{tabular}
\caption{The sets of filters of $\mathfrak{A}_4$, $\mathfrak{A}_6$, and $\mathfrak{A}_8$}
\label{tafiex}
\end{table}
\end{example}

Let $\mathfrak{A}$ and $\mathfrak{B}$ be residuated lattices. A map $f:A\longrightarrow B$ is called a  \textit{morphism of residuated lattices}, in symbols $f:\mathfrak{A}\longrightarrow \mathfrak{B}$, if it preserves the fundamental operations. If $f:\mathfrak{A}\longrightarrow \mathfrak{B}$ is a morphism of residuated lattices, we put $coker(f)=f^{\leftarrow}(1)$. One can see that $f$ is a injective if and only if $coker(f)=\{1\}$.
\begin{example}\label{mora6a4}
  Consider the residuated lattice $\mathfrak{A}_4$ from Example \ref{exa4}, the residuated lattice $\mathfrak{A}_6$ from Example \ref{exa6}. One can see that the map $f:A_{6}\longrightarrow A_{4}$, given by $f(0)=0$, $f(a)=f(b)=a$, $f(c)=b$ and $f(d)=f(1)=1$ is a morphism of residuated lattices.
\end{example}

A proper filter in a residuated lattice $\mathfrak{A}$ is termed \textit{maximal} if it is a maximal element within the set of all proper filters of $\mathfrak{A}$. The collection of maximal filters of $\mathfrak{A}$ is denoted by $Max(\mathfrak{A})$. Additionally, a proper filter $\mathfrak{p}$ in $\mathfrak{A}$ is \textit{prime} if $x\vee y\in \mathfrak{p}$ implies $x\in \mathfrak{p}$ or $y\in \mathfrak{p}$, for any $x, y \in A$. The set of prime filters of $\mathfrak{A}$ is denoted by $Spec(\mathfrak{A})$. Given the distributive lattice property of $\mathscr{F}(\mathfrak{A})$, it follows that $Max(\mathfrak{A})\subseteq Spec(\mathfrak{A})$. Zorn's lemma establishes that any proper filter is contained within a maximal filter and, consequently, within a prime filter. Minimal prime filters, identified as minimal elements within the set of prime filters, form the set $Min(\mathfrak{A})$. For a detailed exploration of prime filters in a residuated lattice, refer to \cite{rasouli2019going}.
\begin{example}\label{maxminex}
Consider the residuated lattice $\mathfrak{A}_4$ from Example \ref{exa4}, the residuated lattice $\mathfrak{A}_6$ from Example \ref{exa6}, and the residuated lattice $\mathfrak{A}_8$ from Example \ref{exa8}. The sets of their maximal, prime, and minimal prime filters are presented in Table \ref{prfiltab}.
\begin{table}[h]
\centering
\begin{tabular}{cccc}
\hline
                 & \multicolumn{3}{c}{Prime filters}      \\ \hline
                 & Maximal filters &          & Minimal prime filters      \\
$\mathfrak{A}_4$ &  $F_{2},F_{3}$                 &  &$F_{2},F_{3}$\\
$\mathfrak{A}_6$ &  $F_2,F_3$                 &  & $F_{1}$\\
$\mathfrak{A}_8$ &  $F_{2}$                          &         & $F_{3},F_{4}$ \\ \hline
\end{tabular}
\caption{The sets of maximal, prime, and minimal prime filters of $\mathfrak{A}_4$, $\mathfrak{A}_6$, and $\mathfrak{A}_8$}
\label{prfiltab}
\end{table}
\end{example}

Let $\mathfrak{A}$ be a residuated lattice, and $\Pi$ a collection of prime filters of $\mathfrak{A}$. For a subset $\pi$ of $\Pi$, we define $k(\pi)=\bigcap \pi$. Additionally, for a subset $X$ of $A$, we set $h_{\Pi}(X)=\{P\in \Pi \mid X\subseteq P\}$ and $d_{\Pi}(X)=\Pi\setminus h_{\Pi}(X)$.

The collection $\Pi$ can be topologized by considering the collection $\{h_{\Pi}(x)\mid x\in A\}$ as a closed (or open) basis. This topological structure is referred to as \textit{the (dual) hull-kernel topology} on $\Pi$ and is denoted by $\Pi_{h(d)}$. Moreover, the topology generated by $\tau_{h}\cup \tau_{d}$ on $\text{Spec}(\mathfrak{A})$ is known as \textit{the patch topology} and is denoted by $\tau_{p}$.

For a detailed discussion of the (dual) hull-kernel and patch topologies on a residuated lattice, we refer to \cite{rasouli2021hull}.

To clarify, we provide some examples of topological spaces associated with residuated lattices.
\begin{example}\label{spectrumofex}
Consider the residuated lattice $\mathfrak{A}_4$ from Example \ref{exa4}, the residuated lattice $\mathfrak{A}_6$ from Example \ref{exa6}, and the residuated lattice $\mathfrak{A}_8$ from Example \ref{exa8}. Table \ref{prfiltab} presents some topological spaces related to these structures.
\begin{table}[h]
\centering
\begin{tabular}{ccl}
\hline
                           & \multicolumn{2}{c}{Topological spaces}      \\ \hline
$Spec_{h}(\mathfrak{A}_4)$ & \multicolumn{2}{c} {$\{\emptyset,\{F_2\},\{F_3\},\{F_2,F_3\}\}$}\\
$Max_{d}(\mathfrak{A}_6)$ & \multicolumn{2}{c} {$\{\emptyset,\{F_{2}\},\{F_{3}\},\{F_2,F_3\}\}$}\\
$Min_{p}(\mathfrak{A}_8)$ & \multicolumn{2}{c} {$\{\emptyset,\{F_{3}\},\{F_{4}\},\{F_{3},F_{4}\},\{F_2,F_3,F_{4}\}\}$} \\ \hline
\end{tabular}
\caption{Some topological spaces associated with $\mathfrak{A}_4$, $\mathfrak{A}_6$, and $\mathfrak{A}_8$}
\label{prfiltab}
\end{table}
\end{example}

\section{\'{E}tal\'{e} spaces of residuated lattices over a topological space}\label{sec3}

This section introduces and investigates the notion of \'{e}tal\'{e}s of residuated lattices. We recall specific definitions of \'{e}tal\'{e}s of sets that will be used in our exploration. For the sake of completeness, we prove some properties of \'{e}tal\'{e}s of sets.
\begin{definition}\citep[\S 4.4]{engelking1989general}\label{localhomeodef}
A map $f:\mathscr{T}\longrightarrow \mathscr{B}$ is called a \textit{local homeomorphism} provided that for every point in $\mathscr{T}$ there exists a neighbourhood $U$ of this point such that the restriction $f|_{U}$ is a homeomorphism onto an open set.
\end{definition}
\begin{lemma}\label{lochomeocont}
A local homeomorphism is continuous.
\end{lemma}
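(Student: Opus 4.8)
The plan is to unwind the definition of a local homeomorphism and reduce continuity of the whole map to the local data supplied by the definition. Recall that $f:\mathscr{T}\longrightarrow\mathscr{B}$ is a local homeomorphism when each point $t\in\mathscr{T}$ admits an open neighbourhood $U$ such that $f|_{U}:U\longrightarrow f(U)$ is a homeomorphism onto the open set $f(U)$. In particular $f|_{U}$ is continuous as a map into $f(U)$, and since $f(U)$ is open in $\mathscr{B}$, composing with the inclusion $f(U)\hookrightarrow\mathscr{B}$ shows $f|_{U}$ is continuous as a map into $\mathscr{B}$. So the restriction of $f$ to each member of an open cover of $\mathscr{T}$ is continuous.

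The key step is then the standard \emph{local-to-global} principle for continuity: a map is continuous iff it is continuous when restricted to each set of an open cover of the domain. Concretely, I would take an arbitrary open set $V\subseteq\mathscr{B}$ and show $f^{\leftarrow}(V)$ is open in $\mathscr{T}$. Writing $\mathscr{T}=\bigcup_{t} U_{t}$ where each $U_{t}$ is a neighbourhood as above, I have
\[
f^{\leftarrow}(V)=\bigcup_{t}\bigl(f^{\leftarrow}(V)\cap U_{t}\bigr)=\bigcup_{t}(f|_{U_{t}})^{\leftarrow}(V).
\]
Since $f|_{U_{t}}$ is continuous (as established above) and $U_{t}$ is open in $\mathscr{T}$, each set $(f|_{U_{t}})^{\leftarrow}(V)$ is open in the subspace $U_{t}$, hence open in $\mathscr{T}$. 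A union of open sets is open, so $f^{\leftarrow}(V)$ is open, proving $f$ continuous.

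Alternatively, and perhaps more cleanly, I would argue pointwise: continuity of $f$ is equivalent to continuity at every point $t\in\mathscr{T}$, and continuity at $t$ is a purely local property. Given $t$, choose the neighbourhood $U$ from the definition; for any open $W\subseteq\mathscr{B}$ containing $f(t)$, the set $(f|_{U})^{\leftarrow}(W)$ is an open neighbourhood of $t$ in $U$ (hence in $\mathscr{T}$) mapping into $W$, which is exactly continuity at $t$. Since $t$ was arbitrary, $f$ is continuous everywhere.

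I do not anticipate a genuine obstacle here, as the statement is essentially a repackaging of the fact that continuity is a local condition; the only care needed is the bookkeeping that a homeomorphism onto an \emph{open} subset is still continuous as a map into the full codomain, which is where the openness clause in Definition \ref{localhomeodef} is used.
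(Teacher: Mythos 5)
Your proof is correct and follows essentially the same route as the paper: the paper also fixes an open $V\subseteq\mathscr{B}$ and, for each $t\in f^{\leftarrow}(V)$, uses the neighbourhood $U$ from Definition \ref{localhomeodef} to exhibit $(f|_{U})^{\leftarrow}(V\cap f(U))$ as an open neighbourhood of $t$ inside $f^{\leftarrow}(V)$, which is exactly your local-to-global observation in pointwise form. Your explicit remark that openness of $f(U)$ is what makes the restriction continuous into the full codomain is a correct and worthwhile piece of bookkeeping that the paper leaves implicit.
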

\begin{proof}
Let $f:\mathscr{T}\longrightarrow \mathscr{B}$ be a local homeomorphism and $V$ an open subset of $\mathscr{B}$. Consider $t\in f^{\leftarrow}(V)$. There exists a neighborhood $U$ of $t$ such that $f|_{U}$ is a homeomorphism. Evidently, $(f|_{U})^{\leftarrow}(V\cap f(U))$ is a neighborhood of $t$ in $f^{\leftarrow}(V)$.
\end{proof}

\begin{lemma}\label{lochomeoope}
A local homeomorphism is open.
\end{lemma}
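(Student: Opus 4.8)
The plan is to show that a local homeomorphism $f:\mathscr{T}\longrightarrow \mathscr{B}$ is an open map, that is, for every open subset $W$ of $\mathscr{T}$, the image $f(W)$ is open in $\mathscr{B}$. Since openness is a local property of the image, it suffices to exhibit, for each point $t\in W$, an open neighbourhood of $f(t)$ contained in $f(W)$; the union of such neighbourhoods over all $t\in W$ will then equal $f(W)$ and witness that it is open.

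First I would fix an open set $W\subseteq\mathscr{T}$ and a point $t\in W$. By Definition \ref{localhomeodef}, there is a neighbourhood $U$ of $t$ such that $f|_{U}:U\longrightarrow f(U)$ is a homeomorphism onto an open subset $f(U)$ of $\mathscr{B}$. The key observation is that $W\cap U$ is an open subset of $\mathscr{T}$, hence an open subset of $U$ in the subspace topology. Because $f|_{U}$ is a homeomorphism onto the open set $f(U)$, it carries the open subset $W\cap U$ of $U$ to an open subset $(f|_{U})(W\cap U)=f(W\cap U)$ of $f(U)$. Since $f(U)$ is itself open in $\mathscr{B}$, an open subset of $f(U)$ is open in $\mathscr{B}$, so $f(W\cap U)$ is open in $\mathscr{B}$.

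Next I would note that $f(W\cap U)$ is a neighbourhood of $f(t)$: indeed $t\in W\cap U$, so $f(t)\in f(W\cap U)$, and moreover $f(W\cap U)\subseteq f(W)$ because $W\cap U\subseteq W$. Thus every point $f(t)$ of $f(W)$ has an open neighbourhood contained in $f(W)$, which establishes that $f(W)=\bigcup_{t\in W} f(W\cap U_{t})$ is open, completing the argument.

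The argument is short and the only point requiring care is the interplay between the subspace topology on $U$ and the ambient topology on $\mathscr{B}$: one must use both that $f|_{U}$ is an \emph{open} map onto $f(U)$ (which follows from it being a homeomorphism) and that $f(U)$ is open in $\mathscr{B}$, so that ``open in $f(U)$'' upgrades to ``open in $\mathscr{B}$.'' If either hypothesis were dropped the conclusion would fail, so I expect this bookkeeping with the two topologies to be the main (though minor) obstacle.
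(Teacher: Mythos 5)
Your proof is correct and follows essentially the same route as the paper's: pick a point of $W$, restrict to a neighbourhood $U$ on which $f$ is a homeomorphism onto an open set, and observe that $f(W\cap U)$ is an open neighbourhood of the image point inside $f(W)$. In fact your write-up is more careful than the paper's terse version, which picks $t\in f^{\leftarrow}(b)$ without noting it must lie in $W$ and leaves the ``open in $f(U)$ implies open in $\mathscr{B}$'' step implicit; you handle both points explicitly.
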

\begin{proof}
Let $f:\mathscr{T}\longrightarrow \mathscr{B}$ be a local homeomorphism. Assume that $W$ is an open subset of $\mathscr{T}$. Consider $b\in f(W)$. Pick $t\in f^{\leftarrow}(b)$. There exists a neighborhood $U$ of $t$ such that $f|_{U}$ is a homeomorphism. Evidently, $f|_{W}(U\cap W)$ is a neighborhood of $y$ in $f(U)$.
\end{proof}

Recall that if the domain of a map $f:\mathscr{T}\longrightarrow Y$ is a topological space, then the function $f$ is said to be \textit{locally injective} provided that for every point in
$\mathscr{T}$ there exists a neighbourhood $U$ of this point such that the restriction $f|_{U}$ is injective.
\begin{lemma}\label{lochomeolocinj}
A local homeomorphism is locally injective.
\end{lemma}
\begin{proof}
There is nothing to prove.
\end{proof}
\begin{theorem}\label{lhcol}
A map is a local homeomorphism if and only if it is continuous, open, and locally injective. In particular, a bijective local homeomorphism is a homeomorphism.
\end{theorem}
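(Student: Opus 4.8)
The plan is to split the biconditional into its two implications and then obtain the final clause as an easy consequence. The forward implication---that a local homeomorphism is continuous, open, and locally injective---requires no new work, since it is precisely the conjunction of Lemmas \ref{lochomeocont}, \ref{lochomeoope}, and \ref{lochomeolocinj}.

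For the reverse implication, suppose $f:\mathscr{T}\longrightarrow\mathscr{B}$ is continuous, open, and locally injective, and fix $t\in\mathscr{T}$. First I would invoke local injectivity to obtain a neighbourhood of $t$ on which $f$ is injective, and then shrink it to an \emph{open} neighbourhood $U\ni t$ by passing to its interior, observing that injectivity is inherited by the smaller set. The goal is to show that $f|_{U}:U\longrightarrow f(U)$ is a homeomorphism onto the open set $f(U)$. Here $f(U)$ is open because $f$ is an open map and $U$ is open; $f|_{U}$ is continuous as a restriction of $f$; and $f|_{U}$ is a bijection onto its image by injectivity. It remains to see that $f|_{U}$ is open as a map onto $f(U)$: any $W\subseteq U$ that is open in the subspace $U$ is already open in $\mathscr{T}$ because $U$ is open, whence $f(W)$ is open in $\mathscr{B}$ and therefore open in the subspace $f(U)$.

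The only step beyond bookkeeping is the elementary principle that a continuous open bijection is a homeomorphism: openness of $f|_{U}$ is exactly continuity of its set-theoretic inverse, so $f|_{U}$ is a homeomorphism onto the open set $f(U)$. Since $t$ was arbitrary, $f$ is a local homeomorphism, completing the reverse implication.

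For the concluding statement, let $f$ be a bijective local homeomorphism. By the forward implication it is continuous and open, and as a bijection on all of $\mathscr{T}$ it is a continuous open bijection; the same principle invoked above then shows that $f$ is a homeomorphism. I expect the main obstacle to be nothing deep but rather the care needed in replacing an arbitrary injectivity neighbourhood by an open one and in verifying that restriction to the open set $U$ preserves openness of the map onto $f(U)$; these two observations are exactly what allow the `continuous open bijection is a homeomorphism' fact to finish the argument.
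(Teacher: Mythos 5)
Your proof is correct and takes essentially the same route as the paper: the forward implication is obtained, exactly as in the paper, by combining Lemmas \ref{lochomeocont}, \ref{lochomeoope}, and \ref{lochomeolocinj}. The paper dismisses the converse and the final clause as an ``immediate consequence'' without writing them out; your verification---shrinking the injectivity neighbourhood to an open one, noting $f(U)$ is open, and invoking the fact that a continuous open bijection is a homeomorphism---is precisely the standard argument the paper leaves implicit, so there is nothing to correct.
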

\begin{proof}
It is an immediate consequence of Lemmas \ref{lochomeocont}, \ref{lochomeoope} and \ref{lochomeolocinj}.
\end{proof}

\begin{proposition}\label{sheafbasepro}
Let $f:\mathscr{T}\longrightarrow \mathscr{B}$ be a local homeomorphism. The family $\{V\mid V~\textrm{is~open~in}~\mathscr{T}, f|_{V}~\textrm{is~a~homeomorphism}\}$ is a basis for the topology of $\mathscr{T}$.
\end{proposition}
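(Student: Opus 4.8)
The plan is to verify the standard basis criterion: a family of open subsets of $\mathscr{T}$ is a basis precisely when every open subset of $\mathscr{T}$ is a union of members of the family, equivalently, when for every open $W\subseteq \mathscr{T}$ and every $t\in W$ there is a member $V$ of the family with $t\in V\subseteq W$. Since each set in the family $\{V\mid V~\text{open in}~\mathscr{T},~f|_{V}~\text{a homeomorphism}\}$ is open by its very definition, it remains only to produce, for each such pair $(W,t)$, a suitable member $V$.

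First I would invoke the definition of a local homeomorphism (Definition \ref{localhomeodef}) at the point $t$: there is an open neighbourhood $U$ of $t$ such that $f|_{U}$ is a homeomorphism onto the open set $f(U)$. Then I would set $V:=U\cap W$, which is an open neighbourhood of $t$ contained in $W$; the only thing left to check is that $V$ belongs to the family, i.e. that $f|_{V}$ is a homeomorphism onto an open set.

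The key step is this last verification. Since $V$ is an open subset of $U$ and $f|_{U}\colon U\to f(U)$ is a homeomorphism, the further restriction $f|_{V}=(f|_{U})|_{V}$ is a homeomorphism onto its image $f(V)$, because the restriction of a homeomorphism to an open subset of its domain is again a homeomorphism onto the image. To see that $f(V)$ is open, I would appeal to Lemma \ref{lochomeoope}, which gives that $f$ is an open map, so that $f(V)$ is open in $\mathscr{B}$; alternatively, $f(V)$ is open in $f(U)$ since $f|_{U}$ is open, and $f(U)$ is open in $\mathscr{B}$. Either way $V$ lies in the family.

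Finally, having found for each $t\in W$ a member $V_{t}$ with $t\in V_{t}\subseteq W$, I would conclude that $W=\bigcup_{t\in W}V_{t}$, so every open set is a union of members of the family, which establishes that it is a basis for the topology of $\mathscr{T}$. I expect the main (though mild) obstacle to be pinning down that the restriction $f|_{V}$ really is a homeomorphism onto an \emph{open} subset of $\mathscr{B}$ — in particular the openness of the image $f(V)$ — which is exactly the place where Lemma \ref{lochomeoope} is used.
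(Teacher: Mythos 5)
Your proposal is correct and follows essentially the same route as the paper's proof: intersect the given open set with a neighbourhood on which $f$ restricts to a homeomorphism and observe that the restriction to the intersection is again a homeomorphism onto an open set. The paper compresses the final verification into ``it is easy to check,'' whereas you spell it out (using the openness of $f$ from Lemma \ref{lochomeoope}), which is exactly the detail being elided.
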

\begin{proof}
Let $U$ be an open set in $\mathscr{T}$. Consider $x\in U$. There exists a neighborhood $W$ of $x$ such that $f|_{W}$ is a homeomorphism. Let $V=U\cap W$. It is easy to check that $f|_{V}$ is a homeomorphism and $x\in V\subseteq U$.
\end{proof}

\begin{lemma}\label{lochomeoprop}
\begin{enumerate}
\item [$(1)$ \namedlabel{lochomeoprop1}{$(1)$}] The restriction of a local homeomorphism to a topological subspace is a local homeomorphism;
\item [$(2)$ \namedlabel{lochomeoprop2}{$(2)$}] The composition of two local homeomorphisms is a local homeomorphism.
\end{enumerate}
\end{lemma}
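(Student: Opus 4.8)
The plan is to deduce both statements from the characterisation of local homeomorphisms in Theorem~\ref{lhcol}: a map is a local homeomorphism precisely when it is continuous, open, and locally injective. In each case it then suffices to verify these three properties separately, which decouples the topological bookkeeping and lets me reuse Lemmas~\ref{lochomeocont}, \ref{lochomeoope}, and~\ref{lochomeolocinj} rather than re-argue from Definition~\ref{localhomeodef}.

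For part~\ref{lochomeoprop1}, let $f\colon\mathscr{T}\longrightarrow\mathscr{B}$ be a local homeomorphism and let $S$ be the subspace in question; I would check the three properties for $f|_{S}\colon S\longrightarrow\mathscr{B}$. Continuity is immediate, as a restriction of a continuous map to a subspace is continuous. Local injectivity is equally routine: for $t\in S$ choose, via Lemma~\ref{lochomeolocinj}, a neighbourhood $U$ of $t$ in $\mathscr{T}$ on which $f$ is injective; then $U\cap S$ is a neighbourhood of $t$ in $S$ on which $f|_{S}$ is injective. The substantive point is openness, and this is where the subspace must be taken to be open: if $W$ is open in $S$ and $S$ is open in $\mathscr{T}$, then $W$ is open in $\mathscr{T}$, so $f(W)$ is open in $\mathscr{B}$ by Lemma~\ref{lochomeoope}. (For a non-open subspace the ``onto an open set'' requirement of Definition~\ref{localhomeodef} can fail, since an isolated point of $S$ need not map to an open subset of $\mathscr{B}$, so I would state and prove the claim for open subspaces.) Theorem~\ref{lhcol} then yields that $f|_{S}$ is a local homeomorphism.

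For part~\ref{lochomeoprop2}, let $f\colon\mathscr{T}\longrightarrow\mathscr{B}$ and $g\colon\mathscr{B}\longrightarrow\mathscr{C}$ be local homeomorphisms; I would again verify the three properties, now for $g\circ f$. Continuity and openness are formal: a composite of continuous maps is continuous and a composite of open maps is open, so Lemmas~\ref{lochomeocont} and~\ref{lochomeoope} applied to $f$ and $g$ give both at once. The one step needing care is local injectivity of the composite. Fixing $t\in\mathscr{T}$, I would pick a neighbourhood $U$ of $t$ on which $f$ is injective and a neighbourhood $V$ of $f(t)$ on which $g$ is injective, and set $U'=U\cap f^{\leftarrow}(V)$, which is a neighbourhood of $t$ by continuity of $f$. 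On $U'$ the map $f$ is injective and $f(U')\subseteq V$, so $g$ is injective on $f(U')$; hence $g\circ f$ is injective on $U'$. Theorem~\ref{lhcol} then delivers the conclusion.

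I expect the only genuine obstacle to be the openness/local-injectivity interplay just described. In part~\ref{lochomeoprop1} one must notice that inheriting the image-open condition forces the subspace to be open, and in part~\ref{lochomeoprop2} one must intersect the two injectivity neighbourhoods correctly through $U\cap f^{\leftarrow}(V)$ so that both restrictions are injective simultaneously. Everything else is a direct transcription of the definitions together with the three cited lemmas and Theorem~\ref{lhcol}.
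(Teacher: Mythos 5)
Your proposal is correct, and there is little in the paper to compare it against: the paper's proof of Lemma~\ref{lochomeoprop} is the single sentence ``It is evident.'' Your route through Theorem~\ref{lhcol} (continuous $+$ open $+$ locally injective) is sound, and the intersection $U'=U\cap f^{\leftarrow}(V)$ is exactly the one nontrivial step in part~\ref{lochomeoprop2}; a direct argument from Definition~\ref{localhomeodef} (shrink $U$ to $U\cap f^{\leftarrow}(V\cap f(U))$ and check that $g(f(U)\cap V)$ is open) is equally short, so nothing is lost by factoring through the characterisation. One caveat if you want the argument self-contained: the paper's proof of Theorem~\ref{lhcol} cites only Lemmas~\ref{lochomeocont}, \ref{lochomeoope} and~\ref{lochomeolocinj}, which establish only the ``only if'' direction, whereas you invoke the ``if'' direction; it is true, but you should supply its two-line proof (an open neighbourhood on which $f$ is injective is, by openness and continuity of $f$, mapped homeomorphically onto an open set).

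Your most substantive point is the restriction of part~\ref{lochomeoprop1} to open subspaces, and you are right to insist on it: under Definition~\ref{localhomeodef}, which demands a homeomorphism onto an \emph{open} subset of the codomain, the statement is false for arbitrary subspaces. The identity of $\mathbb{R}$ restricted to the subspace $\{0\}$ already fails, since the only neighbourhood of $0$ in $\{0\}$ maps onto $\{0\}$, which is not open in $\mathbb{R}$; restricting the codomain to the image does not repair this in general (take two disjoint copies of $\mathbb{R}$ mapping identically to $\mathbb{R}$ and let $S$ be one full copy together with a single point of the other). So the paper's item~(1), declared evident, is incorrect as literally stated, and your amended version is the correct one. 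Fortunately the lemma does not appear to be invoked elsewhere — for instance Proposition~\ref{pb of es and em} handles the pullback étalé directly rather than by restriction — so the flaw does not propagate into the rest of the paper.
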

\begin{proof}
It is evident.
\end{proof}

Let $\mathcal{C}$ be a category and $c$ an object of $\mathcal{C}$. Recall that the \textit{slice category} (an especial type of comma category \cite[\S 6 of Chapter II ]{lane1971categories}) of $\mathcal{C}$ over $c$, denoted by $\mathcal{C}/c$, is the category whose objects are all pairs $(a,f)$, where $a$ is an object of $\mathcal{C}$ and $f$ is a morphism from $a$ to $c$, and whose morphisms from $(a,f)$ to $(b,g)$ given by morphisms $h$ of $\mathcal{C}$ such that $gh=f$ (See Fig. \ref{fig:slicec}). If $\mathcal{C}/c$ is a slice category, $c$ is known as the \textit{base space} of the category $\mathcal{C}/c$.
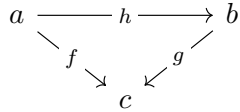
\begin{figure}[h!]
\centering
\begin{tikzcd}
a \arrow[rd, "f" description] \arrow[rr, "h" description] & & b \arrow[ld, "g" description] \\
& c &
\end{tikzcd}
\caption{Morphism $h$ of the slice category $\mathcal{C}/c$} \label{fig:slicec}
\end{figure}

Let $\textbf{Top}$ ($\textbf{Top}_{\ell}$) denote the category of topological spaces with continuous maps (local homeomorphisms) as morphisms. Given a topological space $\mathscr{B}$, the elements of the category $\mathbf{Bundle}(\mathscr{B})=\textbf{Top}/\mathscr{B}$ are referred to as \textit{bundles} over $\mathscr{B}$ (see \citet[\S 2.4]{maclane2012sheaves}), while the elements of the category $\mathbf{Etale}(\mathscr{B})=\textbf{Top}_{\ell}/\mathscr{B}$ are known as \textit{\'{e}tal\'{e}s} over $\mathscr{B}$. A morphism between bundles (\'{e}tal\'{e}s) over $\mathscr{B}$ is denoted as a \textit{morphism of bundles (\'{e}tal\'{e}s)} over $\mathscr{B}$.

If $(\mathscr{T},\pi)$ is an \'{e}tal\'{e} over $\mathscr{B}$, $\mathscr{T}$ is known as the \textit{total space}, and $\pi$ as the \textit{\'{e}tal\'{e} projection}. For a given $b\in \mathscr{B}$, the inverse image $\pi^{\leftarrow}(b)$ is referred to as the \textit{stalk of $\mathscr{T}$ at $b$} and denoted by $\mathscr{T}_{b}$. In the sequel, a bundle (\'{e}tal\'{e}) $(\mathscr{T},\pi)$ over $\mathscr{B}$ shall be denoted by $\mathscr{T}\overset{\pi}{\downarrow}\mathscr{B}$ and simply called a bundle (an \'{e}tal\'{e}). A morphism of bundles (\'{e}tal\'{e}s) $h:(\mathscr{T}_{1},\pi_1)\to (\mathscr{T}_{2},\pi_2)$ over $\mathscr{B}$ shall be denoted by $h:\mathscr{T}_{1}\overset{\pi_1}{\downarrow}\mathscr{B}\to \mathscr{T}_{2}\overset{\pi_2}{\downarrow}\mathscr{B}$ and simply called a morphism of bundles (\'{e}tal\'{e}s).

The following proposition, which is restated the lemma 3.5 of \cite[\S 2]{Tennison1976} with more detailed proof, establishes that the morphisms of bundles and \'{e}tal\'{e}s over a topological space are the same.
\begin{proposition}\label{conopnloc}
Let $\mathscr{T}\overset{\pi}{\downarrow} \mathscr{B}$ and $\mathscr{S}\overset{\phi}{\downarrow} \mathscr{B}$ be two \'{e}tal\'{e}s. If $h:\mathscr{T}\longrightarrow \mathscr{S}$ is a map such that $\phi h=\pi$, the following are equivalent:
\begin{enumerate}
\item [$(1)$ \namedlabel{conopnloc1}{$(1)$}] $h$ is a local homeomorphism;
\item [$(2)$ \namedlabel{conopnloc2}{$(2)$}] $h$ is open;
\item [$(3)$ \namedlabel{conopnloc3}{$(3)$}] $h$ is continuous.
\end{enumerate}
\end{proposition}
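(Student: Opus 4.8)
The plan is to prove the two substantive implications $(2)\Rightarrow(1)$ and $(3)\Rightarrow(1)$ by hand, and to obtain $(1)\Rightarrow(2)$ and $(1)\Rightarrow(3)$ for free from the earlier lemmas; together these close all three equivalences. Indeed $(1)\Rightarrow(3)$ is exactly Lemma \ref{lochomeocont} and $(1)\Rightarrow(2)$ is exactly Lemma \ref{lochomeoope}, so the whole content of the proposition lies in recovering $(1)$ from either $(2)$ or $(3)$.

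The first observation I would record, and which drives both remaining directions, is that the hypothesis $\phi h=\pi$ forces $h$ to be locally injective with no assumption on $h$ whatsoever. Given $t\in\mathscr{T}$, the local homeomorphism $\pi$ supplies an open neighbourhood $U$ of $t$ with $\pi|_{U}$ a homeomorphism onto an open set (Proposition \ref{sheafbasepro}); since $\phi h|_{U}=\pi|_{U}$ is injective, $h|_{U}$ must be injective too. Thus, by Theorem \ref{lhcol}, in each of $(2)$ and $(3)$ it suffices to upgrade the hypothesis to ``continuous, open, and locally injective,'' and local injectivity is already in hand.

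For $(3)\Rightarrow(1)$ I would argue pointwise. Fix $t$, choose $U\ni t$ with $\pi|_{U}$ a homeomorphism onto an open set, and choose an open $V\ni h(t)$ on which $\phi|_{V}$ is a homeomorphism onto an open set. Continuity of $h$ makes $W:=U\cap h^{\leftarrow}(V)$ an open neighbourhood of $t$, and on $W$ the relation $\phi h=\pi$ lets me write $h|_{W}=(\phi|_{V})^{-1}\circ\pi|_{W}$; since $\pi(x)=\phi(h(x))\in\phi(V)$ for $x\in W$, this really is a composite of the homeomorphism $\pi|_{W}$ with a restriction of the homeomorphism $(\phi|_{V})^{-1}$, hence a homeomorphism onto the open set $h(W)$. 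For $(2)\Rightarrow(1)$ I would reuse the same neighbourhood $U$, but now invoke openness of $h$ to see that $h(U)$ is open; the local-injectivity computation then shows $\phi|_{h(U)}$ is injective, so by Lemma \ref{lochomeoprop} and Theorem \ref{lhcol} it is a homeomorphism of $h(U)$ onto $\phi(h(U))=\pi(U)$. Consequently $h|_{U}=(\phi|_{h(U)})^{-1}\circ\pi|_{U}$ is a composite of homeomorphisms, and $h$ is again a local homeomorphism.

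The main obstacle is precisely the direction $(2)\Rightarrow(1)$: openness of $h$ gives no direct control over \emph{preimages}, so one cannot simply cut down to a neighbourhood mapping into $V$ as in the continuous case. The device that resolves this is to invert on the image side rather than the source side: recognising that $h(U)$ is open (this is the only place $(2)$ is used) and that $\phi$ restricted to $h(U)$ is an injective local homeomorphism, hence a homeomorphism, one can glue $\pi|_{U}$ and $(\phi|_{h(U)})^{-1}$ into $h|_{U}$ via the identity $\pi(U)=\phi(h(U))$. I expect verifying that $\phi|_{h(U)}$ is injective and that the two base images coincide to be the only points that need genuine care.
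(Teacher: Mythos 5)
Your proposal is correct, but it organizes the equivalence differently from the paper. The paper proves the cycle \ref{conopnloc1}$\Rightarrow$\ref{conopnloc2}$\Rightarrow$\ref{conopnloc3}$\Rightarrow$\ref{conopnloc1}, so its substantive work is the implication \ref{conopnloc2}$\Rightarrow$\ref{conopnloc3}: continuity is extracted from openness by an ad hoc argument that, given a basic open $V$ with $\phi|_{V}$ a homeomorphism and $x\in h^{\leftarrow}(V)$, manufactures the open set $W=h(U)\cap \phi|_{V}^{\leftarrow}(\pi(U))$ (openness of $h$ entering only through $h(U)$) and then checks that $\pi|_{U}^{\leftarrow}(\phi|_{V}(W))$ is a neighbourhood of $x$ contained in $h^{\leftarrow}(V)$. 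You instead take \ref{conopnloc1} as the hub, proving \ref{conopnloc2}$\Rightarrow$\ref{conopnloc1} and \ref{conopnloc3}$\Rightarrow$\ref{conopnloc1} directly; since \ref{conopnloc1}$\Rightarrow$\ref{conopnloc2} and \ref{conopnloc1}$\Rightarrow$\ref{conopnloc3} are Lemmas \ref{lochomeoope} and \ref{lochomeocont}, this closes all equivalences just as well. Your \ref{conopnloc3}$\Rightarrow$\ref{conopnloc1} is essentially the paper's argument, merely repackaged as the factorization $h|_{W}=(\phi|_{V})^{-1}\circ \pi|_{W}$ instead of separate injectivity and openness checks. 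Your \ref{conopnloc2}$\Rightarrow$\ref{conopnloc1} is genuinely different and arguably cleaner: inverting on the image side via $h|_{U}=(\phi|_{h(U)})^{-1}\circ \pi|_{U}$ makes the single use of openness (that $h(U)$ is open) completely transparent, whereas in the paper's route it is buried in the definition of $W$; and your isolated observation that $\phi h=\pi$ alone forces local injectivity of $h$, with no hypothesis on $h$, is a structural point the paper re-derives implicitly inside each implication. Two details deserve explicit mention in a write-up. First, Theorem \ref{lhcol} concerns \emph{bijective} local homeomorphisms, so before invoking it you should corestrict $\phi|_{h(U)}$ to its image $\phi(h(U))=\pi(U)$, which is open (by Lemma \ref{lochomeoope}, or because it equals $\pi(U)$ with $\pi|_{U}$ a homeomorphism onto an open set); corestriction of a local homeomorphism to an open set containing the image is again a local homeomorphism, so this is routine but should be said. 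Second, your appeal to Lemma \ref{lochomeoprop}\ref{lochomeoprop1} is safe precisely because $h(U)$ is open: for a non-open subspace the restriction of a local homeomorphism need not be one in the sense of Definition \ref{localhomeodef}, which requires images to be open, so the openness of $h$ is doing quiet work at that step too.
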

\begin{proof}
\item [\ref{conopnloc1}$\Rightarrow$\ref{conopnloc2}:] It is evident by Lemma \ref{lochomeoope}.
\item [\ref{conopnloc2}$\Rightarrow$\ref{conopnloc3}:] Applying Proposition \ref{sheafbasepro}, let $V$ be a basic open set in $\mathscr{S}$ such that $\phi|_{V}$ is a homeomorphism. Let $x\in h^{\leftarrow}(V)$. So $\pi(x)\in \phi(V)$. One an find a neighborhood $U$ of $x$ such that $\pi|_{U}$ is a homeomorphism and $\pi(U)\subseteq \phi(V)$. Let $W=h(U)\cap \phi|^{\leftarrow}_{V}(\pi(U))$. Consider $t\in \pi|^{\leftarrow}_{U}(\phi|_{V}(W))$. There exist $w\in W$ and $u\in U$ such that $\pi(u)=\phi(h(u))=\phi(w)=\pi(t)$, and this implies that $u=t$. This states that $t\in h^{\leftarrow}(V)$.
\item [\ref{conopnloc3}$\Rightarrow$\ref{conopnloc1}:] Let $t\in \mathscr{T}$. There exist neighborhoods $U$ and $V$, respectively, of $t$ and $h(t)$ such that $\pi|_{U}$ and $\phi|_{V}$ are homeomorphisms. By continuity of $h$, $W=U\cap h^{\leftarrow}(V)$ is a neighborhood of $t$. Let $h(t_{1})=h(t_{2})$, for some $t_{1},t_{2}\in W$. So we have $\pi(t_{1})=\phi(h(t_{1}))=\phi(h(t_{2}))=\pi(t_{2})$ and this states that $t_{1}=t_{2}$, since $\pi|_{U}$ is injective. Let $O$ be an open subset in $W$. So we have $h(O)=(\phi|_{V})^{\leftarrow}(\phi(h(O)))=(\phi|_{V})^{\leftarrow}(\pi(h(O)))$ and this shows that $h(O)$ is open.
\end{proof}

\begin{corollary}\label{etafulsubbun}
  Given a topological space $\mathscr{B}$, $\mathbf{Etale}(\mathscr{B})$ is a full subcategory of $\mathbf{Bundle}(\mathscr{B})$.
\end{corollary}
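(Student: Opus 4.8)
The plan is to verify the two conditions that make a subcategory full: first that $\mathbf{Etale}(\mathscr{B})$ is a subcategory of $\mathbf{Bundle}(\mathscr{B})$ at all (every \etale{} object is a bundle object, and every \etale{} morphism is a bundle morphism), and second that the inclusion is full (every bundle morphism whose source and target happen to be \etale{}s is already an \etale{} morphism). The key observation I would stress at the outset is that almost all of the work has already been carried out in Proposition \ref{conopnloc}; this corollary is meant to be an immediate packaging of that result into categorical language, so I do not expect a genuine obstacle, only the need to match up definitions carefully.

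For the object and subcategory part, I would argue as follows. An object of $\mathbf{Etale}(\mathscr{B})=\textbf{Top}_{\ell}/\mathscr{B}$ is a pair $(\mathscr{T},\pi)$ with $\pi:\mathscr{T}\to\mathscr{B}$ a local homeomorphism; by Lemma \ref{lochomeocont} such a $\pi$ is continuous, hence $(\mathscr{T},\pi)$ is an object of $\mathbf{Bundle}(\mathscr{B})=\textbf{Top}/\mathscr{B}$. Likewise, a morphism $h:(\mathscr{T}_1,\pi_1)\to(\mathscr{T}_2,\pi_2)$ in $\mathbf{Etale}(\mathscr{B})$ is a local homeomorphism with $\pi_2 h=\pi_1$, and again by Lemma \ref{lochomeocont} it is a continuous map with $\pi_2 h=\pi_1$, i.e.\ a morphism in $\mathbf{Bundle}(\mathscr{B})$. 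Since identities and composites are computed the same way in both slice categories (composition of local homeomorphisms is a local homeomorphism by Lemma \ref{lochomeoprop}), this shows $\mathbf{Etale}(\mathscr{B})$ is a subcategory.

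The heart of the statement is fullness, and here I would invoke Proposition \ref{conopnloc} directly. Let $(\mathscr{T}_1,\pi_1)$ and $(\mathscr{T}_2,\pi_2)$ be \etale{}s and let $h:(\mathscr{T}_1,\pi_1)\to(\mathscr{T}_2,\pi_2)$ be any morphism of bundles between them, that is, a continuous map with $\pi_2 h=\pi_1$. Applying Proposition \ref{conopnloc} with $\phi=\pi_2$, the implication \ref{conopnloc3}$\Rightarrow$\ref{conopnloc1} shows that $h$ is a local homeomorphism, hence a morphism of \etale{}s. Therefore the hom-set $\mathbf{Bundle}(\mathscr{B})\big((\mathscr{T}_1,\pi_1),(\mathscr{T}_2,\pi_2)\big)$ coincides with $\mathbf{Etale}(\mathscr{B})\big((\mathscr{T}_1,\pi_1),(\mathscr{T}_2,\pi_2)\big)$, which is exactly fullness. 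The only point requiring care is bookkeeping of the slice-category conventions, namely that a morphism in $\textbf{Top}_{\ell}/\mathscr{B}$ is required to be a local homeomorphism while one in $\textbf{Top}/\mathscr{B}$ is only required to be continuous; once this is spelled out, the corollary is an immediate consequence of Proposition \ref{conopnloc}.
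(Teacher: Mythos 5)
Your proof is correct and matches the paper's intent exactly: the paper disposes of this corollary with the single line ``Evident by Proposition \ref{conopnloc},'' and your argument is precisely the unpacking of that remark --- the subcategory inclusion via Lemma \ref{lochomeocont} (and Lemma \ref{lochomeoprop} for composites), and fullness via the implication \ref{conopnloc3}$\Rightarrow$\ref{conopnloc1} of Proposition \ref{conopnloc}. Your explicit bookkeeping of the slice-category conventions (morphisms in $\textbf{Top}_{\ell}/\mathscr{B}$ required to be local homeomorphisms versus merely continuous in $\textbf{Top}/\mathscr{B}$) is a welcome clarification of what the paper leaves implicit.
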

\begin{proof}
  Evident by Proposition \ref{conopnloc}.
\end{proof}

\begin{definition}
Let $\mathscr{T}\overset{\pi}{\downarrow} \mathscr{B}$ be an \'{e}tal\'{e}. A continuous map $\sigma:X\longrightarrow \mathscr{T}$, where $X$ is a subset of $\mathscr{B}$, is referred to as a \textit{section of $\mathscr{T}$ over $X$} if $\pi\sigma=1$. The set of all sections of $\mathscr{T}$ over $X$ is denoted by $\Gamma(X,\mathscr{T})$. Elements of $\Gamma(\mathscr{B},\mathscr{T})$ are termed \textit{global sections} of $\mathscr{T}$.
\end{definition}

\begin{lemma}\label{shepropo}
Let $\mathscr{T}\overset{\pi}{\downarrow} \mathscr{B}$ be an \'{e}tal\'{e}, and $t\in \mathscr{T}$. There exist a neighborhood $U$ of $\pi(t)$ and a section $\sigma$ of $\mathscr{T}$ over $U$ such that $\sigma\pi(t)=t$.
\end{lemma}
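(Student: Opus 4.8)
The plan is to read off the section directly from the definition of an \'{e}tal\'{e}, by exhibiting it as a local inverse of the projection $\pi$. The whole argument is a straightforward unwinding of Definition \ref{localhomeodef}, so I expect no genuine obstacle; the only point requiring a moment's care is the continuity of $\sigma$ when it is viewed as a map into the whole of $\mathscr{T}$ rather than merely onto its image.

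First I would apply the local-homeomorphism property of $\pi$ to the given point $t\in\mathscr{T}$. By Definition \ref{localhomeodef} there is an open neighborhood $W$ of $t$ in $\mathscr{T}$ such that $\pi|_{W}$ is a homeomorphism onto an open subset of $\mathscr{B}$. I would then set $U:=\pi(W)$. This $U$ is open in $\mathscr{B}$ (either because it is the homeomorphic image singled out in the definition, or by Lemma \ref{lochomeoope}), and it is a neighborhood of $\pi(t)$ since $t\in W$ forces $\pi(t)\in\pi(W)=U$.

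Next I would define $\sigma:=(\pi|_{W})^{-1}:U\longrightarrow W\hookrightarrow\mathscr{T}$, that is, the inverse of the homeomorphism $\pi|_{W}$ followed by the inclusion of the subspace $W$ into $\mathscr{T}$. Continuity of $\sigma$ then follows at once: $(\pi|_{W})^{-1}$ is continuous as the inverse of a homeomorphism, and the inclusion $W\hookrightarrow\mathscr{T}$ is continuous, so their composite is continuous into $\mathscr{T}$. To see that $\sigma$ is a section over $U$, I would compute $\pi\sigma=(\pi|_{W})(\pi|_{W})^{-1}=1_{U}$, which is exactly the section condition $\pi\sigma=1$. Finally, since $t\in W$ and $\pi|_{W}$ is a bijection on $W$, I would conclude $\sigma\pi(t)=(\pi|_{W})^{-1}\bigl(\pi|_{W}(t)\bigr)=t$, giving the required section passing through $t$. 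This completes the construction, and as noted the argument uses nothing beyond the defining property of a local homeomorphism together with the openness recorded in Lemma \ref{lochomeoope}.
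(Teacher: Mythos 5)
Your proof is correct and follows the same route as the paper's own argument: take a neighborhood $W$ of $t$ on which $\pi|_{W}$ is a homeomorphism, set $U=\pi(W)$, and define $\sigma=(\pi|_{W})^{\leftarrow}$. You merely spell out the continuity of $\sigma$ and the verifications $\pi\sigma=1_{U}$ and $\sigma\pi(t)=t$, which the paper leaves implicit.
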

\begin{proof}
There exists a neighborhood $V$ of $t$ such that $\pi|_{V}$ is a homeomorphism. Take $U=\pi(V)$ and define $\sigma:U\longrightarrow \mathscr{T}$ by $\sigma(x)=(\pi|_{V})^{\leftarrow}(x)$, for all $x\in U$.
\end{proof}

\begin{lemma}\label{1shepropo}
Let $\mathscr{T}\overset{\pi}{\downarrow} \mathscr{B}$ be an \'{e}tal\'{e}, $\sigma$ a section of $\mathscr{T}$ over an open subset of $\mathscr{B}$, $x$ an element of the domain $\sigma$, and $V$ a neighborhood of $\sigma(x)$ such that $\pi|_{V}$ is a homeomorphism. There exists a neighborhood $U$ of $x$ such that $\sigma(U)\subseteq V$ and $\sigma|_{U} = (\pi|_{V} )^{\leftarrow}|_{U}$.
\end{lemma}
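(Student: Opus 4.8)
The plan is to produce the required neighborhood explicitly as the intersection of the pullback of $V$ along $\sigma$ with the image $\pi(V)$, and then to verify both asserted properties by exploiting the two facts available: that $\sigma$ is a continuous right inverse of $\pi$ (so $\pi\sigma$ is the identity on the domain of $\sigma$) and that $\pi|_{V}$ is injective.

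First I would set $U=\sigma^{\leftarrow}(V)\cap \pi(V)$. The set $\sigma^{\leftarrow}(V)$ is open because $\sigma$ is continuous and $V$ is open, while $\pi(V)$ is open by Lemma \ref{lochomeoope}, since $\pi$ is a local homeomorphism and hence open; thus $U$ is open. To see that $x\in U$, note that $\sigma(x)\in V$ gives $x\in \sigma^{\leftarrow}(V)$, and the section identity $\pi\sigma=1$ yields $x=\pi(\sigma(x))\in \pi(V)$. Hence $U$ is a genuine neighborhood of $x$ contained both in the domain of $\sigma$ and in $\pi(V)$, which is precisely the domain of $(\pi|_{V})^{\leftarrow}$, so both sides of the claimed equality are defined on $U$.

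The inclusion $\sigma(U)\subseteq V$ is then immediate, since $U\subseteq \sigma^{\leftarrow}(V)$. For the equality of maps, I would fix $y\in U$ and compare the two points $\sigma(y)$ and $(\pi|_{V})^{\leftarrow}(y)$ of $V$: both lie in $V$, the former because $y\in \sigma^{\leftarrow}(V)$ and the latter by definition of the inverse, and both are sent by $\pi$ to $y$ — for $\sigma(y)$ because $\pi\sigma=1$, and for $(\pi|_{V})^{\leftarrow}(y)$ by construction. Since $\pi|_{V}$ is a homeomorphism, in particular injective, the two points coincide, giving $\sigma|_{U}=(\pi|_{V})^{\leftarrow}|_{U}$.

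There is no serious obstacle here; the argument is a direct bookkeeping of definitions. The only point requiring care is to build the neighborhood small enough that $(\pi|_{V})^{\leftarrow}$ is actually defined on it — that is, to include the factor $\pi(V)$ and to verify $x\in \pi(V)$ through $\pi\sigma=1$ — and then to invoke injectivity of $\pi|_{V}$ rather than attempting a more computational comparison of the two maps.
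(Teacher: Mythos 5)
Your proof is correct and takes essentially the same route as the paper, which simply sets $U=\sigma^{\leftarrow}(V)$ and verifies the equality by the computation $(\pi|_{V})^{\leftarrow}|_{U}(y)=(\pi|_{V})^{\leftarrow}(\pi\sigma(y))=\sigma(y)$, using $\pi\sigma=1$ and $\sigma(y)\in V$. The only difference is that your extra factor $\pi(V)$ is redundant: since $\pi\sigma=1$, any $y\in\sigma^{\leftarrow}(V)$ satisfies $y=\pi(\sigma(y))\in\pi(V)$, so $\sigma^{\leftarrow}(V)\subseteq\pi(V)$ automatically and $(\pi|_{V})^{\leftarrow}$ is already defined on all of $\sigma^{\leftarrow}(V)$.
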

\begin{proof}
By hypothesis $U=\sigma^{\leftarrow}(V)$ is a neighborhood of $x$ in $\mathscr{B}$. Obviously, $\sigma(U)\subseteq V$. Also, we have $(\pi|_{V} )^{\leftarrow}|_{U}(x)=(\pi|_{V} )^{\leftarrow}|_{U}(\pi\sigma(x))=(\pi|_{V} )^{\leftarrow}(\pi\sigma(x))=\sigma(x)$, for any $x\in U$.
\end{proof}

Let's note that if $\mathscr{T}\overset{\pi}{\downarrow} \mathscr{B}$ is an \'{e}tal\'{e}, $\sigma:X\longrightarrow \mathscr{T}$ is a section with $\xymatrix{X\ar@{^(->}[r]^i & \mathscr B}$ and $\xymatrix{Z\ar@{^(->}[r]^k & X}$ is a subspace of $X$, then $\sigma k:Z\longrightarrow \mathscr{T}$ is a section. We sometimes write $\sigma|_Z$ for the composition $\sigma k$.

If $X, Y \subseteq \mathscr{B}$, and $\sigma \in \Gamma(X,\mathscr{T})$ and $\tau \in \Gamma(Y,\mathscr{T})$, the \textit{equalizer} of $\sigma|_{X\cap Y}$ and $\tau|_{X\cap Y}$ in the category $\textbf{Top}$ is denoted, for simplicity, by $\textsc{Eq}(\sigma,\tau)$. This equalizer is defined as the set $\{b \in X\cap Y \mid \sigma(b) = \tau(b)\}$ equipped with the subspace topology. By a slight abuse of language, we refer to it as the equalizer of $\sigma$ and $\tau$.
\begin{proposition}\label{sheprop}
Let $\mathscr{T}\overset{\pi}{\downarrow} \mathscr{B}$ be an \'{e}tal\'{e}. The following hold:
\begin{enumerate}
\item [$(1)$ \namedlabel{sheprop1}{$(1)$}] If $U$ and $V$ are open sets in $\mathscr{B}$, $\sigma\in \Gamma(U,\mathscr{T})$ and $\tau\in \Gamma(V,\mathscr{T})$, then $\textsc{Eq}(\sigma,\tau)$ is open;
\item [$(2)$ \namedlabel{sheprop2}{$(2)$}] if $\mathscr{T}$ is Hausdorff, $U$ and $V$ are open sets in $\mathscr{B}$, $\sigma\in \Gamma(U,\mathscr{T})$ and $\tau\in \Gamma(V,\mathscr{T})$, then $\textsc{Eq}(\sigma,\tau)$ is clopen;
\item [$(3)$ \namedlabel{sheprop3}{$(3)$}] Each section of $\mathscr{T}$, defined over an open set in $\mathscr{B}$, is open.
\end{enumerate}
\end{proposition}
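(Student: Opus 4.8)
The plan is to treat part \ref{sheprop1} as the crux and then obtain parts \ref{sheprop2} and \ref{sheprop3} from the same local description of sections furnished by Lemma \ref{1shepropo}. The organizing principle throughout is that a section of an \'{e}tal\'{e} is \emph{locally rigid}: near any point of its domain it must coincide with the inverse homeomorphism $(\pi|_{W})^{\leftarrow}$ attached to a suitable neighborhood $W$ of its value in $\mathscr{T}$. This single fact, already isolated in Lemma \ref{1shepropo}, is what powers all three statements; Proposition \ref{sheafbasepro} guarantees that such $W$ may always be chosen open.

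For part \ref{sheprop1}, I would fix a point $b\in\textsc{Eq}(\sigma,\tau)$ and set $t=\sigma(b)=\tau(b)$. Since $\pi$ is a local homeomorphism, choose an open neighborhood $W$ of $t$ on which $\pi|_{W}$ is a homeomorphism. Applying Lemma \ref{1shepropo} first to $\sigma$ and then to $\tau$ produces neighborhoods $U_{1}$ and $U_{2}$ of $b$ with $\sigma|_{U_{1}}=(\pi|_{W})^{\leftarrow}|_{U_{1}}$ and $\tau|_{U_{2}}=(\pi|_{W})^{\leftarrow}|_{U_{2}}$. On the neighborhood $U_{1}\cap U_{2}$ of $b$ both sections equal the single map $(\pi|_{W})^{\leftarrow}$, so $U_{1}\cap U_{2}\subseteq\textsc{Eq}(\sigma,\tau)$. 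Thus every point of the equalizer is interior to it, whence $\textsc{Eq}(\sigma,\tau)$ is open in $U\cap V$, and equivalently in $\mathscr{B}$ since $U\cap V$ is open.

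For part \ref{sheprop2}, openness is already given by part \ref{sheprop1}, so it remains to show $\textsc{Eq}(\sigma,\tau)$ is closed in $U\cap V$. Here I would use the Hausdorff hypothesis in the standard way: take $b\in(U\cap V)\setminus\textsc{Eq}(\sigma,\tau)$, so that $\sigma(b)\neq\tau(b)$, and separate these two points by disjoint open sets $O_{1}\ni\sigma(b)$ and $O_{2}\ni\tau(b)$ in $\mathscr{T}$. Continuity of $\sigma$ and $\tau$ makes $\sigma^{\leftarrow}(O_{1})\cap\tau^{\leftarrow}(O_{2})$ a neighborhood of $b$ on which $\sigma$ and $\tau$ can never agree, hence disjoint from the equalizer. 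This shows the complement is open, so $\textsc{Eq}(\sigma,\tau)$ is closed in $U\cap V$; combined with part \ref{sheprop1} it is clopen there.

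For part \ref{sheprop3}, let $\sigma\colon U\to\mathscr{T}$ be a section over an open set $U$ and let $O\subseteq U$ be open; I must show $\sigma(O)$ is open in $\mathscr{T}$. For each $b\in O$ choose an open neighborhood $W$ of $\sigma(b)$ with $\pi|_{W}$ a homeomorphism and invoke Lemma \ref{1shepropo} to obtain a neighborhood $U_{b}$ of $b$ with $\sigma(U_{b})\subseteq W$ and $\sigma|_{U_{b}}=(\pi|_{W})^{\leftarrow}|_{U_{b}}$. Then $\sigma(O\cap U_{b})=(\pi|_{W})^{\leftarrow}(O\cap U_{b})$, and since $(\pi|_{W})^{\leftarrow}$ is a homeomorphism from the open set $\pi(W)$ onto the open set $W$, the image of the open set $O\cap U_{b}$ is open in $W$ and hence in $\mathscr{T}$. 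Writing $\sigma(O)=\bigcup_{b\in O}\sigma(O\cap U_{b})$ exhibits it as a union of open sets, so it is open, and $\sigma$ is therefore an open map. The main point requiring care is the bookkeeping of domains in this last step: one must verify that $O\cap U_{b}$ really lies in the domain $\pi(W)$ of $(\pi|_{W})^{\leftarrow}$, which follows from $U_{b}=\pi\sigma(U_{b})\subseteq\pi(W)$ using $\pi\sigma=1$. I expect this domain-matching to be the only genuinely delicate point; the remainder is a direct unwinding of the local rigidity of sections.
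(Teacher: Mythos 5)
Your proposal is correct and follows essentially the same route as the paper: parts \ref{sheprop1} and \ref{sheprop3} rest on the local rigidity of sections from Lemma \ref{1shepropo} (the paper's proof of \ref{sheprop1} is just a terser version of your argument, leaving implicit that both sections agree with $(\pi|_{W})^{\leftarrow}$ on the common neighborhood), and part \ref{sheprop3} differs only in that you explicitly treat an arbitrary open $O\subseteq U$ where the paper handles $\sigma(U)$ itself, which suffices since restrictions of sections to open sets are again sections. For part \ref{sheprop2} the paper simply cites the standard fact that equalizers of continuous maps into a Hausdorff space are closed (\citet[Theorem 13.13]{willard2012general}), whereas you inline its routine separation proof; the content is identical.
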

\begin{proof}
\item [\ref{sheprop1}:] Let $b\in \textsc{Eq}(\sigma,\tau)$. Assume that $W$ is a neighborhood of $\sigma(b)$ such that $\pi|_{W}$ is a homeomorphism. Applying Lemma \ref{1shepropo}, there exist neighborhoods $U_{0}$ and $V_{0}$ of $b$ such that $\sigma(U_{0}),\tau(V_{0})\subseteq W$. One can see that $U_{0}\cap V_{0}$ is a neighborhood of $b$ contained in $\textsc{Eq}(\sigma,\tau)$.
\item [\ref{sheprop2}:] It follows by \ref{sheprop1} and \citet[Theorem 13.13]{willard2012general}.
\item [\ref{sheprop3}:] Let $U$ be an open set in $\mathscr{B}$ and $\sigma\in \Gamma(U,\mathscr{T})$. Consider $t\in \sigma(U)$. So $t=\sigma(b)$, for some $b\in U$. Let $W$ be a neighborhood $t$ such that $\pi|_{W}$ is a homeomorphism. By Lemma \ref{1shepropo}, there exists a neighborhood $V$ of $b$ such that $\sigma(V)\subseteq W$ and $\sigma|_{V} = (\pi|_{W} )^{\leftarrow}|_{V}$. So $\sigma(V)=(\pi|_{W} )^{\leftarrow}(V)$ is a neighborhood of $t$ contained in $\sigma(U)$.
\end{proof}

\begin{proposition}\label{1sheafbasepro}
Let $\mathscr{T}\overset{\pi}{\downarrow} \mathscr{B}$ be an \'{e}tal\'{e}. The family
$$\{\sigma(U)\mid U~\textrm{is~open~in}~\mathscr{B}, \sigma\in \Gamma(U,\mathscr{T})\}$$
is a basis for the topology of $\mathscr{T}$.
\end{proposition}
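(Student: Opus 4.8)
The plan is to verify the two defining conditions for a family of subsets to be a basis: first, that every member of the family is open in $\mathscr{T}$; and second, that every open set of $\mathscr{T}$ is a union of members of the family, equivalently that for each open $O\subseteq\mathscr{T}$ and each point $t\in O$ there is a member of the family containing $t$ and contained in $O$. The first condition is essentially free, and all the real content of the statement lies in the second.

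For openness, I would simply invoke Proposition \ref{sheprop}\ref{sheprop3}: every section $\sigma\in\Gamma(U,\mathscr{T})$ over an open set $U\subseteq\mathscr{B}$ has open image $\sigma(U)$. Hence each set $\sigma(U)$ appearing in the family is open, and the family is indeed a collection of open sets.

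For the covering condition, fix an open set $O\subseteq\mathscr{T}$ and a point $t\in O$; the goal is to produce an open $U\subseteq\mathscr{B}$ and a section $\sigma\in\Gamma(U,\mathscr{T})$ with $t\in\sigma(U)\subseteq O$. Since $\pi$ is a local homeomorphism, I would first choose a neighborhood $V$ of $t$ on which $\pi|_{V}$ is a homeomorphism onto an open set, and then \emph{shrink} it by replacing $V$ with $V\cap O$, so that the trivializing neighborhood lies inside $O$. Setting $U=\pi(V\cap O)$, which is open because a local homeomorphism is open (Lemma \ref{lochomeoope}), and defining $\sigma=(\pi|_{V\cap O})^{\leftarrow}\colon U\longrightarrow\mathscr{T}$ exactly as in the construction of Lemma \ref{shepropo}, one gets a section over the open set $U$ whose image is $\sigma(U)=V\cap O$, so that $t=\sigma(\pi(t))\in\sigma(U)=V\cap O\subseteq O$. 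Combining this with the openness just established yields that the family is a basis.

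The one step meriting care is the shrinking of $V$ to $V\cap O$: one must confirm that $\pi$ still restricts to a homeomorphism onto an open set on this smaller neighborhood. This follows because $V\cap O$ is open in $\mathscr{T}$, the restriction of a local homeomorphism to a subspace is again a local homeomorphism (Lemma \ref{lochomeoprop}\ref{lochomeoprop1}), and $\pi|_{V\cap O}$ is injective (being a restriction of the injective $\pi|_{V}$), so by Theorem \ref{lhcol} it is a homeomorphism onto its image, which is open by Lemma \ref{lochomeoope}. Everything else is a direct application of Lemma \ref{shepropo} together with Proposition \ref{sheprop}\ref{sheprop3}, so I do not anticipate any genuine obstacle beyond bookkeeping.
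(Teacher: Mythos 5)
Your proof is correct and takes essentially the same route as the paper: the paper's proof also realizes each trivializing open set $V$ as the image $\sigma(U)$ of the section $\sigma=(\pi|_{V})^{\leftarrow}$ over $U=\pi(V)$ and then reduces to Proposition \ref{sheafbasepro}, whose shrinking argument ($V\mapsto V\cap O$) you simply re-derive inline instead of citing. If anything, your explicit appeal to Proposition \ref{sheprop}\ref{sheprop3} for the openness of the sets $\sigma(U)$ makes fully precise a step the paper leaves tacit.
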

\begin{proof}
\sloppy{Let $V$ be an open set in $\mathscr{T}$ such that $\pi|_{V}$ is a homeomorphism. Take $U=\pi(V)$ and define $\sigma:U\longrightarrow \mathscr{T}$ by $\sigma(x)=(\pi|_{V})^{\leftarrow}(x)$, for all $x\in U$. It is evident that $\sigma\in \Gamma(U,\mathscr{T})$ and $V=\sigma(U)$. So $\{V\mid V~\textrm{is~Open~in}~\mathscr{T}, \pi|_{V}~\textrm{is~a~homeomorphism}\}\subseteq \{\sigma(U)\mid U~\textrm{is~Open~in}~\mathscr{B}, \sigma\in \Gamma(U,\mathscr{T})\}$ and so the result holds due to Proposition \ref{sheafbasepro}.}
\end{proof}

Let $X$ be a set, $\{Y_i\}_{i\in I}$ be a collection of topological spaces, and $\{f_i:Y_{i}\longrightarrow X\}_{i\in I}$ be a collection of maps. The \textit{final topology} induced by $\{f_i\}_{i\in I}$ on $X$ is the finest topology on $X$ which makes $f_i:Y_{i}\longrightarrow X$ continuous for all $i\in I$. It is obvious that a subset $U$ of $X$ is open in the final topology if and only if $f_{i}^{\leftarrow}(U)$ is open in $Y_{i}$, for each $i\in I$.
\begin{proposition}\label{finalshetopo}
Let $\mathscr{T}\overset{\pi}{\downarrow} \mathscr{B}$ be an \'{e}tal\'{e}. The following hold:
\begin{enumerate}
\item [$(1)$ \namedlabel{finalshetopo1}{$(1)$}] The topology of $\mathscr{T}$ coincides with the final topology induced by the set of sections of $\mathscr{T}$;
\item [$(2)$ \namedlabel{finalshetopo2}{$(2)$}] the topology induced on each stalk of $\mathscr{T}$ is the discrete topology.
\end{enumerate}
\end{proposition}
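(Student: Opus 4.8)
The plan is to handle the two assertions separately, using only that $\pi$ is a local homeomorphism together with the results already assembled: the existence of a local section through every point (Lemma~\ref{shepropo}), the openness of sections defined over open sets (part \ref{sheprop3} of Proposition~\ref{sheprop}), and the fact that the images $\sigma(U)$ form a basis (Proposition~\ref{1sheafbasepro}).

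For \ref{finalshetopo1}, write $\tau$ for the given topology on $\mathscr{T}$ and $\tau_{f}$ for the final topology induced by the family of all sections $\sigma\in\Gamma(U,\mathscr{T})$, with $U$ open in $\mathscr{B}$. One inclusion is immediate: every section is continuous for $\tau$ by definition, and $\tau_{f}$ is by construction the finest topology rendering all sections continuous, so $\tau\subseteq\tau_{f}$. The content is the reverse inclusion. I would take a $\tau_{f}$-open set $W$ and an arbitrary point $t\in W$. By Lemma~\ref{shepropo} there is a section $\sigma$ over a neighborhood $U$ of $\pi(t)$ with $\sigma(\pi(t))=t$; since $W$ is $\tau_{f}$-open, $\sigma^{\leftarrow}(W)$ is open in $U$ and contains $\pi(t)$. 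Applying part \ref{sheprop3} of Proposition~\ref{sheprop}, $\sigma$ is an open map, so $\sigma(\sigma^{\leftarrow}(W))$ is $\tau$-open; it contains $t=\sigma(\pi(t))$ and lies inside $W$. Hence $W$ is a $\tau$-neighborhood of each of its points, so $W\in\tau$ and $\tau_{f}\subseteq\tau$, giving equality.

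For \ref{finalshetopo2}, fix $b\in\mathscr{B}$ and $t\in\mathscr{T}_{b}=\pi^{\leftarrow}(b)$. Since $\pi$ is a local homeomorphism, choose a neighborhood $V$ of $t$ on which $\pi|_{V}$ is a homeomorphism, hence injective. Then $V\cap\mathscr{T}_{b}$ is open in the subspace topology of the stalk, and every point of it is sent by $\pi$ to $b=\pi(t)$; injectivity of $\pi|_{V}$ then forces $V\cap\mathscr{T}_{b}=\{t\}$. Thus each singleton of $\mathscr{T}_{b}$ is relatively open, so the stalk carries the discrete topology.

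The only genuinely delicate point is the reverse inclusion in \ref{finalshetopo1}, and this is exactly where the \'{e}tal\'{e} hypothesis is used: it combines the fact that every point of $\mathscr{T}$ lies in the image of some section (Lemma~\ref{shepropo}) with the fact that sections over open sets are open maps (part \ref{sheprop3} of Proposition~\ref{sheprop}). Everything else is a direct unwinding of the definition of the final topology and of a stalk.
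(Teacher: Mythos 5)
Your proposal is correct and follows essentially the paper's own argument: for \ref{finalshetopo1} it is the same proof verbatim in structure (the easy inclusion from continuity of sections, and the reverse inclusion by taking $t$ in a final-open $W$, producing a section through $t$ via Lemma~\ref{shepropo}, and observing that $\sigma(\sigma^{\leftarrow}(W))$ is a $\tau$-open neighborhood of $t$ inside $W$ by Proposition~\ref{sheprop}\ref{sheprop3}). For \ref{finalshetopo2} you invoke the local-homeomorphism property directly (a neighborhood $V$ of $t$ with $\pi|_{V}$ injective forces $V\cap\mathscr{T}_{b}=\{t\}$) where the paper instead intersects the stalk with the image $\sigma(U)$ of a section through the point, but these are the same idea --- an open set meeting the stalk in exactly one point --- and your version is, if anything, marginally more direct.
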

\begin{proof}
\item [\ref{finalshetopo1}:] Let $V$ be a subset of $\mathscr{T}$. If $V$ is open in the topology of $\mathscr{T}$, for every
section $\sigma$ of $\mathscr{T}$, $\sigma^{\leftarrow}(V)$ is an open subset of the domain $\sigma$. Thus $V$ is open in the final topology. Conversely, let $V$ be open in the final topology. Let $x\in V$. Applying Lemma \ref{shepropo}, there exists a section $\sigma$ such that $\sigma\pi(x)=x$. So by Proposition \ref{sheprop}\ref{sheprop3}, $\sigma\sigma^{\leftarrow}(V)$ is a neighborhood of $x$ contained in $V$. Thus $V$ is open in the topology of $\mathscr{T}$.
\item [\ref{finalshetopo2}:] Let $x\in \mathscr{B}$. By Lemma \ref{shepropo}, it follows that for any $y\in \mathscr{T}_{x}$ there exists a section $\sigma$ such that $\sigma\pi(y)=y$. One can see that $\sigma(U)\cap \mathscr{T}_{x}=\{y\}$.
\end{proof}
 \begin{definition} Let $f:X\to Y$ be a function.
 \begin{itemize}
 \item[\bf a)]
The \textit{kernel pair of $f$}, denoted as $\kappa_{f}$, is given by the following pullback:
\begin{center}
$\xymatrix{\kappa_{f} \ar[d]_{p_1} \ar[r]^{p_2} & X \ar[d]^f \\ X \ar[r]_f & Y}$
\end{center}
\item[\bf b)]
A map $\rho:\kappa_{f} \to X$ is said to be \textit{$f$-proper} provided that the following square commutes:
\begin{center}
$\xymatrix{\kappa_{f} \ar[d]_{p_1} \ar[r]^{\rho} & X \ar[d]^f \\ X \ar[r]_f & Y}$
\end{center}
\end{itemize}
\end{definition}

 \begin{proposition}\label{lemofkerpa}
Let $f:X\to Y$ be a function. With $X_{y}=f^{\leftarrow}(y)$ being the stalk at $y\in Y$, there is a one to one correspondence between,
\begin{itemize}
\item[$(1)$ \namedlabel{lemofkerpa1}{$(1)$}]
the collections of binary operations $\{\overset{f}{\diamond}_y:X_y\times X_y \to X_y\mid y\in Y\}$ and $f$-proper maps $\overset{f}{\diamond} : \kappa_{f} \to X$.
\item[$(2)$ \namedlabel{lemofkerpa2}{$(2)$}]
the collections of nullary operations $\{\overset{f}{\diamond}_y:\{y\} \to X_y\mid y\in Y\}$ and global sections $\overset{f}{\diamond} :Y \to X$.
\end{itemize}
\end{proposition}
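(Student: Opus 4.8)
The plan is to exploit the fact that, as a set, the kernel pair decomposes fibrewise. An element $(x,x')\in\kappa_f$ satisfies $f(x)=f(x')$, so its common image determines a unique $y\in Y$ for which $(x,x')\in X_y\times X_y$; conversely every pair in $X_y\times X_y$ lies in $\kappa_f$. Thus there is a canonical set-theoretic identification $\kappa_f=\bigsqcup_{y\in Y}\bigl(X_y\times X_y\bigr)$, under which the projection $p_1$ sends $(x,x')\in X_y\times X_y$ to $x$. The whole proposition then follows by transporting operations across this identification, the only genuine content being that $f$-properness corresponds exactly to each operation being closed on its stalk.

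For part \ref{lemofkerpa1}, I would define the two assignments and check they are mutually inverse. Given a collection $\{\overset{f}{\diamond}_y\}$, set $\overset{f}{\diamond}(x,x'):=x\,\overset{f}{\diamond}_y\,x'$, where $y=f(x)=f(x')$; this is well defined by the identification above. To see it is $f$-proper, note that $\overset{f}{\diamond}_y$ takes values in $X_y$, so $f\bigl(\overset{f}{\diamond}(x,x')\bigr)=y=f(x)=f\bigl(p_1(x,x')\bigr)$, which is precisely the commuting square defining $f$-properness. Conversely, given an $f$-proper map $\overset{f}{\diamond}:\kappa_f\to X$, define $\overset{f}{\diamond}_y$ as the restriction $\overset{f}{\diamond}|_{X_y\times X_y}$. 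Here the key point is closure: $f$-properness gives $f\bigl(\overset{f}{\diamond}(x,x')\bigr)=f(x)=y$ for $(x,x')\in X_y\times X_y$, so the value lies in $X_y$ and $\overset{f}{\diamond}_y$ is genuinely an operation $X_y\times X_y\to X_y$. Unwinding the definitions then shows these two passages are inverse to one another.

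For part \ref{lemofkerpa2}, the argument is the same one degree lower: a nullary operation $\overset{f}{\diamond}_y:\{y\}\to X_y$ is just a distinguished element $\overset{f}{\diamond}_y(y)\in X_y$, and a collection of these is exactly a choice of a point in each stalk. Given such a collection I would define $\overset{f}{\diamond}:Y\to X$ by $\overset{f}{\diamond}(y):=\overset{f}{\diamond}_y(y)$; since $\overset{f}{\diamond}_y(y)\in X_y=f^{\leftarrow}(y)$ we get $f\overset{f}{\diamond}=1_Y$, so $\overset{f}{\diamond}$ is a global section. The inverse assignment sends a section $\overset{f}{\diamond}$ to the family $y\mapsto\overset{f}{\diamond}(y)$, which lands in $X_y$ precisely because $f\overset{f}{\diamond}=1_Y$.

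I do not expect a serious obstacle: the proof is essentially bookkeeping around the identification $\kappa_f=\bigsqcup_y X_y\times X_y$. The one place that requires care — and the conceptual heart of the statement — is verifying that the $f$-properness condition $f\overset{f}{\diamond}=f p_1$ is equivalent to fibrewise closure of the operations; everything else is a routine check that the two assignments compose to the identity in both directions.
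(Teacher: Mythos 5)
Your proof is correct and follows essentially the same route as the paper: both define $\overset{f}{\diamond}(x_1,x_2)=\overset{f}{\diamond}_{f(x_1)}(x_1,x_2)$ in one direction, restrict an $f$-proper map to each stalk in the other, and observe that $f$-properness is exactly fibrewise closure. Your explicit identification $\kappa_f=\bigsqcup_{y\in Y}(X_y\times X_y)$ and your spelled-out treatment of part $(2)$ (which the paper dismisses as obvious) are just slightly more detailed bookkeeping around the identical argument.
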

\begin{proof}
\item [\ref{lemofkerpa1}:] Given $\{\overset{f}{\diamond}_y:X_y\times X_y \to X_y\mid y\in Y\}$, define $\overset{f}{\diamond}: \kappa_{f} \to X$ by $\overset{f}{\diamond}(x_1,x_2)=\overset{f}{\diamond}_{f(x_{1})}(x_{1},x_{2})$. It follows easily that $\overset{f}{\diamond}$ is $f$-proper. Conversely, given an $f$-proper map $\overset{f}{\diamond}:\kappa_{f} \to X$, define $\overset{f}{\diamond}_y(x_{1},x_{2})=\overset{f}{\diamond}(x_{1},x_{2})$. The commutativity of the square shows that $\overset{f}{\diamond}(x_{1},x_{2})\in X_y$. It can be easily verified that the correspondence is indeed one to one.
 \item [\ref{lemofkerpa2}:] Obvious.
\end{proof}
\begin{definition}
Let $f:X\to Y$ be a map. Given $f$-proper functions $\overset{f}{\vee},\overset{f}{\wedge},\overset{f}{\odot},\overset{f}{\to}:\kappa_{f}\to X$ and global sections $\overset{f}{0}, \overset{f}{1}:Y\to X$, we say
$(\kappa_{f};\overset{f}{\vee},\overset{f}{\wedge},\overset{f}{\odot},\overset{f}{\to},\overset{f}{0}, \overset{f}{1})$ is a \textit{stalk residuated lattice}, provided that $(X_y,\overset{f}{\vee}_y,\overset{f}{\wedge}_y,\overset{f}{\odot}_y,\overset{f}{\to}_y,\overset{f}{0}_y,\overset{f}{1}_y)$
is a residuated lattice, for any $y\in Y$.
\end{definition}

Next, we introduce the notion of bundle and \'{e}tal\'{e} of residuated lattices, inspired by the one obtained for universal algebras \citep{davey1973sheaf}.
 \begin{definition}\label{bund-etal of RL}
 A bundle (an \'{e}tal\'{e}) $\mathscr{T}\overset{\pi}{\downarrow}\mathscr{B}$ is said to be a bundle (an \'{e}tal\'{e}) of residuated lattices provided that the following assertions hold:
\begin{itemize}
\item[$(1)$ \namedlabel{etalresidef1}{$(1)$}] there are $\pi$-proper functions $\overset{\pi}{\vee},\overset{\pi}{\wedge},\overset{\pi}{\odot},\overset{\pi}{\to}$ and global sections $\overset{\pi}{0}, \overset{\pi}{1}$ making $\kappa_{\pi}$ a stalk residuated lattice;
 \item[$(2)$ \namedlabel{etalresidef2}{$(2)$}] the functions $\overset{\pi}{\vee},\overset{\pi}{\wedge},\overset{\pi}{\odot},\overset{\pi}{\to},\overset{\pi}{0}$ and $\overset{\pi}{1}$ are continuous.
\end{itemize}
\end{definition}
\begin{remark}
  The existence of the global section $\overset{f}{0}$ (or $\overset{f}{1}$) makes a bundle (an \'{e}tal\'{e}) of residuated lattices a \textit{retraction} in $\textbf{Top}$.
\end{remark}

To clarify the discussion, in the following, we will give some examples of \'{e}tal\'{e} of residuated lattices over a topological space.
\begin{example}\label{etspecha4}
Consider the residuated lattice $\mathfrak{A}_4$ from Example \ref{exa4} and the topological space $Spec_{h}(\mathfrak{A}_4)$ from Example \ref{spectrumofex}. Let $\mathscr{T}=\mathfrak{A}_{2}\coprod \mathfrak{A}_{2}=\{0_{1},1_{1},0_{2},1_{2}\}$ equipped with the discrete topology. Define $\pi:\mathscr{T}\to Spec(\mathfrak{A}_{4})$ by $\pi(0_{1})=\pi(1_{1})=F_{2}$ and $\pi(0_{2})=\pi(1_{2})=F_{3}$. One can see that $\mathscr{T}\overset{\pi}{\downarrow}Spec(\mathfrak{A}_{4})$ is an \'{e}tal\'{e} of residuated lattices.

\end{example}
\begin{example}\label{etmaxda6}
Consider the residuated lattice $\mathfrak{A}_6$ from Example \ref{exa6} and the topological space $Max_{d}(\mathfrak{A}_6)$ from Example \ref{spectrumofex}. Let $\mathscr{T}=\mathfrak{A}_{2}\coprod \mathfrak{A}_{3}=\{0_{1},1_{1},0_{2},a_{2},1_{2}\}$ equipped with the discrete topology. Define $\pi:\mathscr{T}\to Spec(\mathfrak{A}_{4})$ by $\pi(0_{1})=\pi(1_{1})=F_{2}$ and $\pi(0_{2})=\pi(a_{2})=\pi(1_{2})=F_{3}$. One can see that $\mathscr{T}\overset{\pi}{\downarrow}Max_{d}(\mathfrak{A}_6)$ is an \'{e}tal\'{e} of residuated lattices.
\end{example}
\begin{example}\label{etminpa8}
Consider the residuated lattice $\mathfrak{A}_8$ from Example \ref{exa8} and the topological space $Min_{p}(\mathfrak{A}_8)$ from Example \ref{spectrumofex}. Let $\mathscr{T}=\mathfrak{A}_{3}\coprod \mathfrak{A}_{4}=\{0_{1},a_{1},1_{1},0_{2},a_{2},b_{2},1_{2}\}$ equipped with the discrete topology. Define $\pi:\mathscr{T}\to Spec(\mathfrak{A}_{4})$ by $\pi(0_{1})=\pi(1_{1})=F_{2}$ and $\pi(0_{2})=\pi(a_{2})=\pi(1_{2})=F_{3}$. One can see that $\mathscr{T}\overset{\pi}{\downarrow}Min_{p}(\mathfrak{A}_8)$ is an \'{e}tal\'{e} of residuated lattices.
\end{example}

 \begin{proposition}\label{bunetalresipro}
Let $\mathscr{T}\overset{\pi}{\downarrow}\mathscr{B}$ be a bundle such that $\kappa_{f}$ is a stalk residuated lattice. Consider the following statements:
\begin{itemize}
\item [$(1)$ \namedlabel{bunetalresipro1}{$(1)$}] $\mathscr{T}\overset{\pi}{\downarrow}\mathscr{B}$ is a bundle of residuated lattices.
\item [$(2)$ \namedlabel{bunetalresipro2}{$(2)$}] $\Gamma(X,\mathscr{T})$ forms a residuated lattice under pointwise operations, for any subset $X$ of $\mathscr{B}$.
\item [$(3)$ \namedlabel{bunetalresipro3}{$(3)$}] $\Gamma(U,\mathscr{T})$ forms a residuated lattice under pointwise operations, for any open subset $U$ of $B$.
\end{itemize}
We have \ref{bunetalresipro1}$\Rightarrow$\ref{bunetalresipro2}$\Rightarrow$\ref{bunetalresipro3} and if $\mathscr{T}\overset{\pi}{\downarrow}\mathscr{B}$ is an \'{e}tal\'{e} of residuated lattices, then \ref{bunetalresipro3}$\Rightarrow$\ref{bunetalresipro1} as well.
\end{proposition}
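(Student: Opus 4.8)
The plan is to view $\Gamma(X,\mathscr{T})$ as a subset of the direct product $\prod_{b\in X}\mathscr{T}_b$ of the stalk residuated lattices, the inclusion sending a section $\sigma$ to the family $(\sigma(b))_{b\in X}$. Under this identification the pointwise operations on sections are precisely the restrictions of the product operations, so—because $\textbf{RL}$ is a variety (equational)—the residuated lattice axioms hold automatically in $\Gamma(X,\mathscr{T})$ as soon as this set is nonempty and closed under all operations. Thus the real content of all three implications is the interplay between \emph{continuity} of the global operations $\overset{\pi}{\vee},\overset{\pi}{\wedge},\overset{\pi}{\odot},\overset{\pi}{\to},\overset{\pi}{0},\overset{\pi}{1}$ and \emph{closure} of section sets under the pointwise operations, and I would organize the proof around that single theme.

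For \ref{bunetalresipro1}$\Rightarrow$\ref{bunetalresipro2} I would fix a subset $X\subseteq\mathscr{B}$ and $\sigma,\tau\in\Gamma(X,\mathscr{T})$, and form $\langle\sigma,\tau\rangle:X\to\kappa_\pi$, $b\mapsto(\sigma(b),\tau(b))$; this is well defined since $\pi\sigma=\pi\tau=1_X$ and continuous as a map into the subspace $\kappa_\pi\subseteq\mathscr{T}\times\mathscr{T}$. For each binary $\diamond$ the pointwise operation is $\overset{\pi}{\diamond}\circ\langle\sigma,\tau\rangle$, continuous by \ref{bunetalresipro1}, and $\pi$-properness gives $\pi\overset{\pi}{\diamond}=\pi p_1$, whence $\pi(\sigma\diamond\tau)=1_X$, so the result is again a section; the constants $\overset{\pi}{0}|_X,\overset{\pi}{1}|_X$ lie in $\Gamma(X,\mathscr{T})$ because $\overset{\pi}{0},\overset{\pi}{1}$ are continuous. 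Hence $\Gamma(X,\mathscr{T})$ is a nonempty subalgebra of the product and so a residuated lattice. The implication \ref{bunetalresipro2}$\Rightarrow$\ref{bunetalresipro3} is immediate, every open set being a subset.

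The substantive direction is \ref{bunetalresipro3}$\Rightarrow$\ref{bunetalresipro1}, where I would use the étalé hypothesis. Continuity of $\overset{\pi}{0},\overset{\pi}{1}$ comes from \ref{bunetalresipro3} with $U=\mathscr{B}$: the pointwise $0$ and $1$ of the residuated lattice $\Gamma(\mathscr{B},\mathscr{T})$ are exactly $\overset{\pi}{0},\overset{\pi}{1}$, and their membership in $\Gamma(\mathscr{B},\mathscr{T})$ is continuity. For the binary operations I would argue locally. Given $(t_1,t_2)\in\kappa_\pi$ with $\pi(t_1)=\pi(t_2)=b$, Lemma \ref{shepropo} supplies sections through $t_1$ and $t_2$; after restricting to a common open neighborhood $U$ of $b$ I obtain $\sigma,\tau\in\Gamma(U,\mathscr{T})$ with $\sigma(b)=t_1$, $\tau(b)=t_2$. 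By Proposition \ref{sheprop}\ref{sheprop3} the images $\sigma(U),\tau(U)$ are open, so $W=(\sigma(U)\times\tau(U))\cap\kappa_\pi$ is an open neighborhood of $(t_1,t_2)$ in $\kappa_\pi$.

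The key computation is that on $W$ the global operation factors through a section: any $(s_1,s_2)\in W$ satisfies $s_1=\sigma(\pi(s_1))$ and $s_2=\tau(\pi(s_1))$, since $\pi$ inverts $\sigma$ and $\tau$ on their open images and $\pi(s_1)=\pi(s_2)$; therefore $\overset{\pi}{\diamond}(s_1,s_2)=(\sigma\diamond\tau)(\pi(s_1))$. As $\sigma\diamond\tau\in\Gamma(U,\mathscr{T})$ is continuous by \ref{bunetalresipro3} and $(s_1,s_2)\mapsto\pi(s_1)$ is continuous, $\overset{\pi}{\diamond}|_W$ is continuous, and since $(t_1,t_2)$ was arbitrary, $\overset{\pi}{\diamond}$ is continuous. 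I expect this localization to be the main obstacle: it is precisely where the étalé hypothesis is indispensable, because only the local homeomorphism property (via Lemma \ref{shepropo} and the openness of sections) lets me cover each point of $\kappa_\pi$ by a chart on which continuity of $\overset{\pi}{\diamond}$ is traded for the continuity of the section-level operation furnished by \ref{bunetalresipro3}.
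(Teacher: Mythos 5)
Your proof is correct, and while your treatment of \ref{bunetalresipro1}$\Rightarrow$\ref{bunetalresipro2} coincides with the paper's (the pullback-induced map $\dblp{\sigma,\tau}:X\to\kappa_{\pi}$ composed with $\overset{\pi}{\diamond}$, with $\pi$-properness ensuring the result is again a section), your argument for \ref{bunetalresipro3}$\Rightarrow$\ref{bunetalresipro1} takes a genuinely different route. The paper proves continuity of $\overset{\pi}{\diamond}$ by showing preimages of basic opens are open: it invokes Proposition \ref{1sheafbasepro} to reduce to sets of the form $\sigma(U)$, then for $(x,y)$ in the preimage uses Lemma \ref{shepropo} to produce local sections $\sigma_{x},\sigma_{y}$ through $x,y$, the openness of the equalizer $V=\textsc{Eq}(\sigma,\sigma_{x}\diamond\sigma_{y})$ from Proposition \ref{sheprop}\ref{sheprop1}, and openness of sections (Proposition \ref{sheprop}\ref{sheprop3}) to place the open box $(\sigma_{x}(V)\times\sigma_{y}(V))\cap\kappa_{\pi}$ inside $\overset{\pi}{\diamond}^{\leftarrow}(\sigma(U))$. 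You instead cover $\kappa_{\pi}$ by open boxes $W=(\sigma(U)\times\tau(U))\cap\kappa_{\pi}$ and factor $\overset{\pi}{\diamond}|_{W}=(\sigma\diamond\tau)\circ\pi\circ p_{1}$, concluding by the locality of continuity; your factorization identity is verified correctly ($s_{1}=\sigma(\pi(s_{1}))$ and $s_{2}=\tau(\pi(s_{1}))$ because $\pi$ inverts $\sigma,\tau$ on their open images and $\pi(s_{1})=\pi(s_{2})$ on $\kappa_{\pi}$), and it needs only Lemma \ref{shepropo} and Proposition \ref{sheprop}\ref{sheprop3}, not the equalizer lemma or the basis of section images. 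What each buys: the paper's route directly exhibits openness of preimages of basic opens using machinery it has already assembled, whereas yours is leaner in its lemma dependencies and gives a conceptual local chart description of the stalkwise operation as a section-level operation read through $\pi$. You also cover two points the paper's proof leaves implicit: an explicit continuity check for the nullary operations $\overset{\pi}{0},\overset{\pi}{1}$ (via \ref{bunetalresipro3} applied with $U=\mathscr{B}$, whose pointwise constants are exactly these global maps), and the observation that the residuated-lattice axioms in \ref{bunetalresipro2} hold automatically because $\Gamma(X,\mathscr{T})$ is a subalgebra of $\prod_{b\in X}\mathscr{T}_{b}$ and $\textbf{RL}$ is a variety.
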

\begin{proof}
\item [\ref{bunetalresipro1}$\Rightarrow$\ref{bunetalresipro2}:] Let $\overset{\pi}{\diamond}\in\{\overset{\pi}{\vee},\overset{\pi}{\wedge},\overset{\pi}{\odot},\overset{\pi}{\to}\}$. The corresponding map
$\diamond : \Gamma(X,\mathscr{T})\times\Gamma(X,\mathscr{T})\to \Gamma(X,\mathscr{T})$ is defined by $(\sigma \diamond \tau)(x)=\sigma(x)\overset{\pi}{\diamond}_x\tau(x)$, for any $x\in X$. We need to show for continuous sections $\sigma,\tau:X\to \mathscr{T}$, $\sigma \diamond \tau:X\to \mathscr{T}$ is continuous. Since $\pi\sigma=1=\pi\tau$, there is a unique map, that we denote by $\dblp{\sigma,\tau}$, rendering commutative the two triangles as shown below.
\begin{center}
$\xymatrix{X\ar@/_2pc/[rdd]_{\sigma} \ar[rd]|{\dblp{\sigma,\tau}} \ar@/^2pc/[rrd]^{\tau} & & \\ & \kappa_{\pi} \ar[d]_{p_1} \ar[r]^{p_2} & \mathscr{T} \ar[d]^{\pi} \\ & \mathscr{T} \ar[r]_{\pi} & \mathscr{B}}$
\end{center}
One can easily verify that $\sigma \diamond \tau=\overset{\pi}{\diamond}\circ \dblp{\sigma,\tau}$. Since $\dblp{\sigma,\tau}$ and $\overset{\pi}{\diamond}$ are continuous, so is their composition and thus $\sigma\diamond\tau$ is continuous.

 Now let $\overset{\pi}{\diamond}\in\{\overset{\pi}{0},\overset{\pi}{1}\}$. The corresponding section over $X$ is $\diamond:X\to \mathscr{T}$ defined by $\diamond = \overset{\pi}{\diamond}\circ i_X$, where $i_X:X\to \mathscr{B}$ is the inclusion.
Since $\overset{\pi}{\diamond}$ and $i_X$ are continuous, so is $\diamond$. It follows that $\Gamma(X,\mathscr{T})$ is a residuated lattice under the pointwise operations.
\item [\ref{bunetalresipro2}$\Rightarrow$\ref{bunetalresipro3}:] Obvious.
\item [\ref{bunetalresipro3}$\Rightarrow$\ref{bunetalresipro1}:] Let $\mathscr{T}\overset{\pi}{\downarrow}\mathscr{B}$ be an \'{e}tal\'{e} of residuated lattices. Using Proposition \ref{1sheafbasepro}, let $\sigma(U)$ be a basic open set of $\mathscr{T}$, for some open $U$ in $\mathscr{B}$ and section $\sigma$ in $\Gamma(U,\mathscr{T})$. Let $(x,y)\in \overset{\pi}{\diamond}^{\leftarrow}(\sigma(U))$. So $x\overset{\pi}{\diamond}_{b}y=\sigma(b)$, where $\pi(x)=b\in U$. Applying Lemma \ref{shepropo}, there exists a neighborhood $W$ of $b$ and sections $\sigma_{x},\sigma_{y}\in \Gamma(W,\mathscr{T})$ such that $\sigma_{x}(b)=x$ and $\sigma_{y}(b)=y$. By Proposition \ref{sheprop}\ref{sheprop1}, one can see that $V=\textsc{Eq}(\sigma,\sigma_{x}\diamond\sigma_{y})$ is a neighborhood of $b$. Since $(x,y)\in \sigma_{x}(V)\times \sigma_{y}(V)\cap \kappa_{\pi}\subseteq \overset{\pi}{\diamond}^{\leftarrow}(\sigma(U))$, so the result holds by Proposition \ref{sheprop}\ref{sheprop3}.
\end{proof}

\begin{definition}\label{resetalmor}
Let $\mathscr{T} \overset{\pi}{\downarrow}\mathscr{B}$ and $\mathscr{S}\overset{\phi}{\downarrow}\mathscr{B}$ be two bundles (\'{e}tal\'{e}s) of residuated lattices. A bundle (an \'{e}tal\'{e}) morphism $h:\mathscr{T} \overset{\pi}{\downarrow}\mathscr{B}\to \mathscr{S}\overset{\phi}{\downarrow}\mathscr{B}$ is said to be a \textit{morphism of bundles (\'{e}tal\'{e}s) of residuated lattices} provided that the following square commutes:
\[
\xymatrix{\kappa_{\phi} \ar[d]_{\dblp{hp_{1},hp_{2}}} \ar[r]^{\overset{\phi}{\diamond}} & \mathscr{T} \ar[d]^h \\ \kappa_{\psi} \ar[r]_{\overset{\psi}{\diamond}} & \mathscr{S}}
\]
\end{definition}
\begin{lemma}\label{etalresilattmorp}
Let $\mathscr{T} \overset{\pi}{\downarrow}\mathscr{B}$ and $\mathscr{S}\overset{\phi}{\downarrow}\mathscr{B}$ be two bundles (\'{e}tal\'{e}s) of residuated lattices and $h:\mathscr{T} \overset{\pi}{\downarrow}\mathscr{B}\to \mathscr{S}\overset{\phi}{\downarrow}\mathscr{B}$ a bundle (an \'{e}tal\'{e}) morphism. The following are equivalent:
\begin{enumerate}
\item [$(1)$ \namedlabel{etalresilattmorp1}{$(1)$}] $h$ is a morphism of bundles (\'{e}tal\'{e}s) of residuated lattices;
\item [$(2)$ \namedlabel{etalresilattmorp2}{$(2)$}] the restriction $h|_{\mathscr{T}_{b}}$, denoted as $h|_{b}$, is a morphism of residuated lattices, for each $b\in \mathscr{B}$.
\end{enumerate}
\end{lemma}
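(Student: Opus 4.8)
The plan is to unpack Definition \ref{resetalmor} stalk by stalk and to match the result against the definition of a morphism of residuated lattices recalled in Section \ref{sec2}. The first observation is that, since $h$ is a morphism of bundles over $\mathscr{B}$, we have $\phi h = \pi$; hence $h$ carries the stalk $\mathscr{T}_b = \pi^{\leftarrow}(b)$ into $\mathscr{S}_b = \phi^{\leftarrow}(b)$ for every $b \in \mathscr{B}$, so that the restriction $h|_b : \mathscr{T}_b \to \mathscr{S}_b$ is a well-defined map of the underlying sets of the stalk residuated lattices. The same identity $\phi h = \pi$ guarantees that the comparison map $\dblp{hp_1,hp_2} : \kappa_{\pi} \to \kappa_{\phi}$, sending $(t_1,t_2) \mapsto (h(t_1),h(t_2))$, is well defined, because $\pi(t_1) = \pi(t_2)$ forces $\phi(h(t_1)) = \pi(t_1) = \pi(t_2) = \phi(h(t_2))$ and hence $(h(t_1),h(t_2)) \in \kappa_{\phi}$.

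The heart of the argument is to evaluate the square of Definition \ref{resetalmor} at an arbitrary $(t_1,t_2) \in \kappa_{\pi}$ lying over $b := \pi(t_1) = \pi(t_2)$. For a binary symbol $\overset{\pi}{\diamond} \in \{\overset{\pi}{\vee},\overset{\pi}{\wedge},\overset{\pi}{\odot},\overset{\pi}{\to}\}$, the composite $h \circ \overset{\pi}{\diamond}$ sends $(t_1,t_2)$ to $h(t_1 \overset{\pi}{\diamond}_b t_2)$, whereas $\overset{\phi}{\diamond} \circ \dblp{hp_1,hp_2}$ sends it to $h(t_1) \overset{\phi}{\diamond}_b h(t_2)$; here I use Proposition \ref{lemofkerpa} to identify the restriction of $\overset{\pi}{\diamond}$ to $\mathscr{T}_b \times \mathscr{T}_b$ with the stalk operation $\overset{\pi}{\diamond}_b$. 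Consequently the square commutes exactly when $h|_b(t_1 \overset{\pi}{\diamond}_b t_2) = h|_b(t_1)\overset{\phi}{\diamond}_b h|_b(t_2)$ for all $t_1,t_2 \in \mathscr{T}_b$ and all $b$, i.e. exactly when each $h|_b$ preserves $\diamond$. The nullary operations are treated in the same spirit: the commuting triangle $h \circ \overset{\pi}{0} = \overset{\phi}{0}$ over $\mathscr{B}$, evaluated at $b$, reads $h|_b(\overset{\pi}{0}_b) = \overset{\phi}{0}_b$, and likewise for $\overset{\pi}{1}$, so these say precisely that $h|_b$ preserves the constants. Assembling the four binary clauses and the two nullary clauses, the commutativity of all the structure diagrams for $h$ is equivalent to $h|_b$ being a morphism of residuated lattices for every $b$; this biconditional is exactly the asserted equivalence \ref{etalresilattmorp1} $\Leftrightarrow$ \ref{etalresilattmorp2} (the forward direction specializes the global diagrams to a fixed $b$, the backward direction reassembles a family of stalkwise identities into the single global one).

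The computations themselves are routine, so the one point deserving care---and which I regard as the crux---is the bookkeeping that identifies a global diagram over $\kappa_{\pi}$ (resp. over $\mathscr{B}$) with the family of its stalkwise instances. This rests on the set-level decomposition $\kappa_{\pi} = \bigsqcup_{b \in \mathscr{B}} \mathscr{T}_b \times \mathscr{T}_b$, which makes two maps out of $\kappa_{\pi}$ equal precisely when they agree on each fiber square. I would also flag that preservation of $\overset{\pi}{1}$ is in fact automatic once $\overset{\pi}{\to}$ is preserved, via $1 = x \to x$, whereas preservation of $\overset{\pi}{0}$ is genuinely independent and must be supplied by the commuting triangle for the global section $\overset{\pi}{0}$. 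Finally, nothing in this reasoning uses continuity or the local-homeomorphism property, so the same proof establishes the equivalence simultaneously for the bundle version and the \etale{} version stated in the lemma.
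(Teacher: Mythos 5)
Your proposal is correct and takes essentially the same approach as the paper: the paper's (one-diagram) proof likewise observes that the commutativity of the global square of Definition \ref{resetalmor} is equivalent to the commutativity of its restriction to $\mathscr{T}_{b}\times\mathscr{T}_{b}\to\mathscr{S}_{b}$ for each $b\in\mathscr{B}$, which is exactly your fiberwise decomposition of $\kappa_{\pi}$. Your added bookkeeping (well-definedness of $h|_{b}$ from $\phi h=\pi$, the treatment of the nullary operations via the commuting triangles, and the remark that no continuity is used, so both the bundle and \'{e}tal\'{e} cases follow at once) simply makes explicit what the paper leaves implicit.
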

\begin{proof}
  It follows from the commutativity of the following square, for each $b\in \mathscr{B}$:
  \begin{center}
$\xymatrix{\mathscr{T}_{b}\times \mathscr{T}_{b}\ar[d]_{\dblp{hp_{1},hp_{2}}} \ar[r]^{\overset{\phi}{\diamond}_{b}} & \mathscr{T}_{b} \ar[d]^h \\ \mathscr{S}_{b}\times \mathscr{S}_{b} \ar[r]_{\overset{\psi}{\diamond}_{b}} & \mathscr{S}_{b}}$
\end{center}
\end{proof}

\begin{theorem}\label{bundetalcateres}
Let $\mathscr{B}$ be a topological space. The following hold:
\begin{itemize}
\item [$(1)$ \namedlabel{bundetalcateres1}{$(1)$}] The class of bundles of residuated lattices over $\mathscr{B}$ with the class of morphisms of bundles of residuated lattices forms a category, denoted as $\textbf{RL-Bundle}(\mathscr{B})$.
\item [$(2)$ \namedlabel{bundetalcateres2}{$(2)$}] The class of \'{e}tal\'{e}s of residuated lattices over $\mathscr{B}$ with the class of morphisms of \'{e}tal\'{e}s of residuated lattices forms a category, denoted as $\textbf{RL-Etale}(\mathscr{B})$.
\end{itemize}
\end{theorem}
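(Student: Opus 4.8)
The plan is to lean on the fact that the ambient categories are already in hand: $\textbf{Bundle}(\mathscr{B})=\textbf{Top}/\mathscr{B}$ and $\textbf{Etale}(\mathscr{B})=\textbf{Top}_{\ell}/\mathscr{B}$ are slice categories, so associativity of composition and the left/right identity laws are inherited automatically. Consequently, to prove that the bundles (\'{e}tal\'{e}s) of residuated lattices together with their morphisms form a category, it suffices to exhibit them as a \emph{subcategory} of the relevant slice category; concretely, I must check that the class of morphisms of bundles (\'{e}tal\'{e}s) of residuated lattices contains the identity on each such object and is closed under the composition of $\textbf{Bundle}(\mathscr{B})$ (resp. $\textbf{Etale}(\mathscr{B})$). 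The key device is Lemma \ref{etalresilattmorp}, which reduces the defining square of Definition \ref{resetalmor} to the stalkwise condition that $h|_{b}$ be a morphism of residuated lattices for every $b\in\mathscr{B}$; this converts the entire statement into a fiberwise question that I can settle by appealing to the fact that $\textbf{RL}$ is itself a category.

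For part \ref{bundetalcateres1}, I would first treat identities. Given a bundle of residuated lattices $\mathscr{T}\overset{\pi}{\downarrow}\mathscr{B}$, the identity $1_{\mathscr{T}}$ is a bundle morphism over $\mathscr{B}$, and its stalk restriction $(1_{\mathscr{T}})|_{b}$ is the identity map on $\mathscr{T}_{b}$. Since $\mathscr{T}_{b}$ is a residuated lattice and the identity is always a morphism of residuated lattices, Lemma \ref{etalresilattmorp} certifies that $1_{\mathscr{T}}$ is a morphism of bundles of residuated lattices. For composition, let $h:\mathscr{T}\overset{\pi}{\downarrow}\mathscr{B}\to\mathscr{S}\overset{\phi}{\downarrow}\mathscr{B}$ and $g:\mathscr{S}\overset{\phi}{\downarrow}\mathscr{B}\to\mathscr{R}\overset{\psi}{\downarrow}\mathscr{B}$ be morphisms of bundles of residuated lattices. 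Their composite $gh$ is a bundle morphism because $\textbf{Bundle}(\mathscr{B})$ is a category, and for each $b$ one has $(gh)|_{b}=g|_{b}\circ h|_{b}$. By hypothesis and Lemma \ref{etalresilattmorp} the maps $h|_{b}$ and $g|_{b}$ are morphisms of residuated lattices, hence so is their composite; invoking Lemma \ref{etalresilattmorp} once more shows $gh$ is a morphism of bundles of residuated lattices. This establishes \ref{bundetalcateres1}.

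Part \ref{bundetalcateres2} follows by the identical argument with $\textbf{Etale}(\mathscr{B})=\textbf{Top}_{\ell}/\mathscr{B}$ replacing $\textbf{Bundle}(\mathscr{B})$; by Corollary \ref{etafulsubbun} (via Proposition \ref{conopnloc}) the \'{e}tal\'{e} morphisms over $\mathscr{B}$ between \'{e}tal\'{e}s coincide with the bundle morphisms between them, so the same stalkwise verification goes through verbatim. There is no serious obstacle here; the only point deserving explicit mention is the fiberwise identity $(gh)|_{b}=g|_{b}\circ h|_{b}$, which rests on the observation that $\phi h=\pi$ and $\psi g=\phi$ force $h(\mathscr{T}_{b})\subseteq\mathscr{S}_{b}$ and $g(\mathscr{S}_{b})\subseteq\mathscr{R}_{b}$, so that the restrictions compose fiber by fiber. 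Once this is recorded, the category axioms for $\textbf{RL-Bundle}(\mathscr{B})$ and $\textbf{RL-Etale}(\mathscr{B})$ reduce entirely to the corresponding facts in $\textbf{RL}$.
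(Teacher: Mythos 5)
Your proposal is correct and is exactly the routine verification the paper intends where it simply writes ``It is straightforward'': identities and composites are certified stalkwise via Lemma \ref{etalresilattmorp}, with associativity and the unit laws inherited from the ambient slice categories $\textbf{Bundle}(\mathscr{B})$ and $\textbf{Etale}(\mathscr{B})$. Your explicit remark that $\phi h=\pi$ forces $h(\mathscr{T}_b)\subseteq\mathscr{S}_b$, so that $(gh)|_b=g|_b\circ h|_b$, is the one point genuinely worth recording, and you have it.
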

\begin{proof}
It is straightforward.
\end{proof}

The next result, which has a routine verification, demonstrates that for a given topological space $\mathscr{B}$, the inclusion functor $i_{\mathscr{B}}:\textbf{Etale}(\mathscr{B})\to \textbf{Bundle}(\mathscr{B})$, as introduced in Corollary \ref{etafulsubbun}, can be lifted to residuated lattices.
\begin{corollary}\label{rletafulsubcatrlbund}
For a given topological space $\mathscr{B}$, $\textbf{RL-Etale}(\mathscr{B})$ is a full subcategory of $\textbf{RL-Bundle}(\mathscr{B})$.
\end{corollary}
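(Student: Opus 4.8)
The plan is to check directly the two requirements for a full subcategory: that the object class of $\textbf{RL-Etale}(\mathscr{B})$ is contained in that of $\textbf{RL-Bundle}(\mathscr{B})$, and that between any two \'{e}tal\'{e}s of residuated lattices the two hom-sets coincide. The whole argument is meant to be a lift, to the residuated-lattice setting, of the set-level fact already recorded in Corollary \ref{etafulsubbun}, together with the observation that the extra algebraic layer in Definition \ref{resetalmor} is phrased by literally the same commuting square in both categories.

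For the objects, I would argue that an \'{e}tal\'{e} of residuated lattices is in particular a bundle of residuated lattices. Indeed, by Definition \ref{bund-etal of RL} an \'{e}tal\'{e} of residuated lattices is an \'{e}tal\'{e} $\mathscr{T}\overset{\pi}{\downarrow}\mathscr{B}$ carrying $\pi$-proper operations and continuous structure maps that make $\kappa_{\pi}$ a stalk residuated lattice; since the \'{e}tal\'{e} projection is continuous (Lemma \ref{lochomeocont}), $\mathscr{T}\overset{\pi}{\downarrow}\mathscr{B}$ is a bundle, and the very same data witness conditions \ref{etalresidef1} and \ref{etalresidef2} for a bundle of residuated lattices. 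Hence every object of $\textbf{RL-Etale}(\mathscr{B})$ is an object of $\textbf{RL-Bundle}(\mathscr{B})$.

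For fullness, I would fix objects $\mathscr{T}\overset{\pi}{\downarrow}\mathscr{B}$ and $\mathscr{S}\overset{\phi}{\downarrow}\mathscr{B}$ of $\textbf{RL-Etale}(\mathscr{B})$ and show that a morphism $h$ of $\textbf{RL-Bundle}(\mathscr{B})$ between them is automatically a morphism of $\textbf{RL-Etale}(\mathscr{B})$, and conversely. Such an $h$ is, by definition, a bundle morphism over $\mathscr{B}$ satisfying the compatibility square of Definition \ref{resetalmor}. Because both total spaces are \'{e}tal\'{e}s, Proposition \ref{conopnloc} (the content packaged as Corollary \ref{etafulsubbun}) upgrades the underlying continuous map $h$ with $\phi h=\pi$ to an \'{e}tal\'{e} morphism, i.e.\ a local homeomorphism; no further hypothesis is needed. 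The algebraic square of Definition \ref{resetalmor} is identical for bundles and for \'{e}tal\'{e}s, so it transfers verbatim. Thus $h$ is an \'{e}tal\'{e} morphism satisfying that square, which is exactly a morphism of \'{e}tal\'{e}s of residuated lattices; the reverse inclusion is immediate, since an \'{e}tal\'{e} morphism is in particular a bundle morphism. Consequently $\mathrm{Hom}_{\textbf{RL-Etale}(\mathscr{B})}(\mathscr{T},\mathscr{S})=\mathrm{Hom}_{\textbf{RL-Bundle}(\mathscr{B})}(\mathscr{T},\mathscr{S})$.

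I expect no genuine obstacle here: the only point requiring care is to keep the ``underlying morphism'' part and the ``algebraic compatibility'' part separate, and to notice that each is handled by an already-established result (Corollary \ref{etafulsubbun} for the former, and the shared wording of Definition \ref{resetalmor} for the latter). This is why the verification is routine once those two ingredients are invoked in the right order.
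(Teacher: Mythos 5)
Your proposal is correct and follows essentially the route the paper itself intends: the paper declares the verification routine precisely because Corollary \ref{etafulsubbun} (via Proposition \ref{conopnloc}) settles the underlying bundle/\'{e}tal\'{e} morphism level, while the algebraic compatibility square of Definition \ref{resetalmor} is stated verbatim identically in both categories, which is exactly the two-ingredient decomposition you use. Your treatment of objects via Lemma \ref{lochomeocont} is likewise the expected (and sound) observation, so nothing is missing.
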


\section{Interrelation between bundles and \'{e}tal\'{e}s of residuated lattices}\label{sec4}
This section investigates the interrelation between bundles and \'{e}tal\'{e}s of residuated lattices. In the preceding section, we demonstrated that given a topological space $\mathscr{B}$, $\textbf{RL-Etale}(\mathscr{B})$ constitutes a full subcategory of $\textbf{RL-Bundle}(\mathscr{B})$. While in this section, we refine this result, stating that $\textbf{RL-Etale}(\mathscr{B})$ is coreflective in $\textbf{RL-Bundle}(\mathscr{B})$.

Let $X$ and $Y$ be topological spaces. We recall \citep[p. 285]{munkres2014topology} that if $C$ is a compact subspace of $X$
and $U$ is an open subset of $Y$, the sets $S(C,U)=\{f\in C(X,Y)\mid  f(C)\subseteq U\}$ form a subbasis for a topology on $C(X,Y)$ that is called the \textit{compact-open topology}.

In the sequel, for topological spaces $X$ and $Y$, we endow $\mathcal{C}(X,Y)$ with the compact-open topology, and $X\times Y$ with the product topology.
\begin{proposition}\label{C(B,-) adj to B times -}
Let $\mathscr{B}$ be a locally compact Hausdorff space. The following functors are adjoints:
\begin{center}
$\xymatrix{\textbf{Top} \ar@<0.5ex>[rr]^{\mathcal{C}(\mathscr{B},-)} && \textbf{Top} \ar@<0.5ex>[ll]^{\mathscr{B}\times -}}$
\end{center}
\end{proposition}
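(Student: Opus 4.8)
The plan is to establish the classical exponential law: to construct a bijection
$$\Phi_{Y,Z}: \textbf{Top}(\mathscr{B}\times Y, Z) \longrightarrow \textbf{Top}(Y, \mathcal{C}(\mathscr{B}, Z))$$
natural in $Y$ and $Z$, witnessing that $\mathscr{B}\times -$ is left adjoint to $\mathcal{C}(\mathscr{B},-)$. Given a continuous $f:\mathscr{B}\times Y\to Z$, I would define $\Phi(f)$ by currying, $\Phi(f)(y)(b)=f(b,y)$. In the reverse direction, given a continuous $g:Y\to \mathcal{C}(\mathscr{B},Z)$, I would define $\Psi(g)(b,y)=g(y)(b)$; equivalently $\Psi(g)=e\circ(g\times \mathrm{id}_{\mathscr{B}})\circ s$, where $s:\mathscr{B}\times Y\to Y\times \mathscr{B}$ is the coordinate swap and $e:\mathcal{C}(\mathscr{B},Z)\times \mathscr{B}\to Z$ is the evaluation map. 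That $\Phi$ and $\Psi$ are mutually inverse as set maps is immediate from the definitions, so the real work is confirming that both land among the continuous maps.

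First I would verify that $\Phi(f)$ is well defined and continuous. For fixed $y$, the assignment $b\mapsto f(b,y)$ is the composite of $f$ with $b\mapsto(b,y)$, hence continuous, so $\Phi(f)(y)\in\mathcal{C}(\mathscr{B},Z)$. For continuity of $\Phi(f)$ into the compact-open topology it suffices to examine preimages of the subbasic sets $S(C,U)$, where $C\subseteq\mathscr{B}$ is compact and $U\subseteq Z$ is open. One computes $\Phi(f)^{\leftarrow}(S(C,U))=\{y\in Y\mid C\times\{y\}\subseteq f^{\leftarrow}(U)\}$; for $y_0$ in this set, the tube lemma (using compactness of $C$) yields an open $W\ni y_0$ with $C\times W\subseteq f^{\leftarrow}(U)$, so the preimage is open. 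This direction uses only compactness of $C$, not any hypothesis on $\mathscr{B}$.

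The main obstacle is the reverse direction, where the assumption that $\mathscr{B}$ is locally compact Hausdorff enters, through the continuity of the evaluation map $e$. To prove it, fix $(g_0,b_0)$ with $e(g_0,b_0)=g_0(b_0)\in U$ open in $Z$. Since $g_0$ is continuous, $g_0^{\leftarrow}(U)$ is an open neighborhood of $b_0$, and local compactness together with the Hausdorff property produces an open $V\ni b_0$ whose closure $\overline{V}$ is compact with $\overline{V}\subseteq g_0^{\leftarrow}(U)$. Then $S(\overline{V},U)\times V$ is a neighborhood of $(g_0,b_0)$ carried into $U$ by $e$, so $e$ is continuous. Consequently $\Psi(g)=e\circ(g\times\mathrm{id}_{\mathscr{B}})\circ s$ is a composite of continuous maps, hence continuous.

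Finally I would record naturality. Given $\alpha:Z\to Z'$ and $\beta:Y'\to Y$, the squares expressing naturality of $\Phi_{Y,Z}$ in each variable commute by directly unwinding $\Phi(f)(y)(b)=f(b,y)$, since post-composition with $\alpha$ and pre-composition with $\beta$ commute with currying. This yields the natural bijection and hence the adjunction $\mathscr{B}\times -\dashv \mathcal{C}(\mathscr{B},-)$. I expect the tube-lemma computation and the construction of the compact neighborhood $\overline{V}$ to be the two delicate points; everything else is routine diagram and definition chasing.
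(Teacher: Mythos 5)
Your proposal is correct and follows the same overall scheme as the paper: the currying bijection $h\mapsto\hat h$ with $\hat h(x)(b)=h(b,x)$ and its inverse, checked to be mutually inverse, plus naturality by unwinding definitions. The difference is in how the analytic content is handled: the paper outsources the key fact that $h$ is continuous if and only if $\hat h$ is to \citet[Theorem 46.11]{munkres2014topology}, whereas you prove both directions from scratch — the tube-lemma argument showing $\Phi(f)^{\leftarrow}(S(C,U))$ is open (correctly noting this direction needs only compactness of $C$, not local compactness of $\mathscr{B}$), and the continuity of the evaluation map $e:\mathcal{C}(\mathscr{B},Z)\times\mathscr{B}\to Z$ via a compact neighborhood $\overline{V}\subseteq g_0^{\leftarrow}(U)$, which is exactly where the locally compact Hausdorff hypothesis enters. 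This buys a self-contained proof and a sharper localization of where each hypothesis is used. One small omission relative to the paper: the statement asserts an adjunction between \emph{functors}, and the paper explicitly verifies that $\mathcal{C}(\mathscr{B},-)$ is a functor, i.e., that post-composition $\mathcal{C}(\mathscr{B},f)=f\circ -$ is continuous, via the identity $\mathcal{C}(\mathscr{B},f)^{\leftarrow}(S(C,V))=S(C,f^{\leftarrow}(V))$; your naturality paragraph tacitly uses this, so you should record that one-line computation to make the argument complete.
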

\begin{proof}
$\mathcal{C}(\mathscr{B},-)$ sends a continuous $f:X\to Y$ to $\mathcal{C}(\mathscr{B},f)=f\circ -:\mathcal{C}(\mathscr{B},X)\to \mathcal{C}(\mathscr{B},Y)$, which is continuous because for a compact $C\subseteq B$ and an open $V\subseteq Y$, we have $\mathcal{C}(\mathscr{B},f)^{\leftarrow}(S(C,V))=S(C,f^{\leftarrow}(V))$. It then easily follows that $\mathcal{C}(\mathscr{B},-)$ is a functor. Also it is easily verified that
$\mathscr{B}\times -$ is a functor.

Now, we need to show that $\textbf{Top}(\mathscr{B}\times -,-)\cong \textbf{Top}(-,C(\mathscr{B},-))$, i.e., the two functors are naturally isomorphic. Let $\mathscr{T}$ and $\mathscr{X}$ be topological spaces. To each $h:\mathscr{B}\times \mathscr{X} \to \mathscr{T}$ we correspond
$\hat{h}:\mathscr{X}\to \mathcal{C}(\mathscr{B},\mathscr{T})$, defined by $\hat{h}(x)(b)=h(b,x)$, and to each $k:\mathscr{X}\to \mathcal{C}(\mathscr{B},\mathscr{T})$ we correspond $\bar{k}:\mathscr{B}\times \mathscr{X}\to \mathscr{T}$, defined by $\bar{k}(b,x)=k(x)(b)$. One can easily verify that $\bar{\hat{h}}=h$ and $\hat{\bar{k}}=k$. Also $h$ is continuous if and only if $\hat{h}$ is, see \citep[Theorem 46.11]{munkres2014topology}. This proves $\textbf{Top}(\mathscr{B}\times \mathscr{X},\mathscr{T})\cong \textbf{Top}(\mathscr{X},\mathcal{C}(\mathscr{B},\mathscr{T}))$. Naturality can be shown by some computations.
\end{proof}

In the sequel, let $U:\textbf{Top}\to \textbf{Set}$ and $D:\textbf{Set}\to \textbf{Top}$ be the forgetful and the discrete functors, respectively.
 \begin{corollary}\label{U C(B,-) adj to (B times -) D}
The following functors are adjoints:
\begin{center}
$\xymatrix{\textbf{Top} \ar@<0.5ex>[rr]^{U\circ \mathcal{C}(\mathscr{B},-)} && \textbf{Set} \ar@<0.5ex>[ll]^{(\mathscr{B}\times -)\circ D}}$
\end{center}
\end{corollary}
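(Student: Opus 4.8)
The plan is to derive this as a formal consequence of Proposition \ref{C(B,-) adj to B times -} together with the elementary adjunction between the forgetful functor $U:\textbf{Top}\to\textbf{Set}$ and the discrete functor $D:\textbf{Set}\to\textbf{Top}$. The key observation is that composites of adjoint pairs are again adjoint, so I would first record the standard fact $D\dashv U$, and then glue it onto the already-established adjunction $\mathscr{B}\times-\;\dashv\;\mathcal{C}(\mathscr{B},-)$.

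Concretely, I would begin by recalling that $D$ is left adjoint to $U$, i.e.\ there is a natural isomorphism
\[
\textbf{Top}(DS,Y)\cong \textbf{Set}(S,UY),
\]
which holds because a map from a discrete space is continuous for \emph{any} choice of values, so set maps $S\to UY$ are in bijection with continuous maps $DS\to Y$; naturality in both variables is immediate. Next I would invoke Proposition \ref{C(B,-) adj to B times -}, which gives the natural isomorphism
\[
\textbf{Top}(\mathscr{B}\times X,Y)\cong \textbf{Top}(X,\mathcal{C}(\mathscr{B},Y)),
\]
valid since $\mathscr{B}$ is locally compact Hausdorff.

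The decisive step is then to compose these two adjunctions. For a set $S$ and a space $Y$ I would string the isomorphisms together:
\[
\textbf{Top}\bigl((\mathscr{B}\times-)\circ D\,(S),Y\bigr)=\textbf{Top}(\mathscr{B}\times DS,Y)\cong \textbf{Top}(DS,\mathcal{C}(\mathscr{B},Y))\cong \textbf{Set}(S,U\,\mathcal{C}(\mathscr{B},Y)),
\]
where the first isomorphism is Proposition \ref{C(B,-) adj to B times -} applied with $X=DS$, and the second is $D\dashv U$ applied with $Y$ replaced by $\mathcal{C}(\mathscr{B},Y)$. This exhibits $(\mathscr{B}\times-)\circ D$ as left adjoint to $U\circ \mathcal{C}(\mathscr{B},-)$, which is exactly the claim.

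I do not expect a genuine obstacle here, since the result is the general principle that adjunctions compose; the only point requiring a little care is naturality of the composite isomorphism in both $S$ and $Y$. I would handle this by noting that each of the two constituent isomorphisms is natural (the first by Proposition \ref{C(B,-) adj to B times -}, the second by the standard $D\dashv U$ adjunction), and that a composite of natural isomorphisms is again natural, so the combined bijection is natural in $S$ and $Y$ as required. Hence the corollary follows immediately.
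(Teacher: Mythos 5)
Your argument is precisely the paper's primary justification: record $D\dashv U$, invoke Proposition \ref{C(B,-) adj to B times -}, and use the fact that adjunctions compose. That chain of natural isomorphisms is correct as far as it goes, and the naturality discussion is handled properly.

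The one substantive issue is the hypothesis on $\mathscr{B}$. The corollary is stated with \emph{no} assumption on $\mathscr{B}$, whereas your proof invokes Proposition \ref{C(B,-) adj to B times -} and therefore, as you yourself note, is ``valid since $\mathscr{B}$ is locally compact Hausdorff'' --- an assumption you have silently imported into the statement. The paper's proof makes exactly your composition argument but then adds that a direct proof shows the local compactness assumption is not needed, and that remark is what makes the unrestricted statement legitimate. The direct argument is short: when the second factor is a discrete space $DS$, the product $\mathscr{B}\times DS$ is topologically the coproduct $\coprod_{s\in S}\mathscr{B}$, so a map $h:\mathscr{B}\times DS\to Y$ is continuous if and only if every slice $h(-,s):\mathscr{B}\to Y$ is continuous; this gives the bijection $\textbf{Top}(\mathscr{B}\times DS,\,Y)\cong \textbf{Set}(S,\,U\mathcal{C}(\mathscr{B},Y))$, natural in $S$ and $Y$, for an arbitrary topological space $\mathscr{B}$. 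The locally compact Hausdorff condition in Proposition \ref{C(B,-) adj to B times -} is needed only to make the transpose $\hat{h}$ continuous for an arbitrary space $X$ in the second factor (via \citet[Theorem 46.11]{munkres2014topology}), and that point is vacuous when $X=DS$ carries the discrete topology. So to match the corollary as stated, either restrict its statement to locally compact Hausdorff $\mathscr{B}$, or replace the appeal to Proposition \ref{C(B,-) adj to B times -} with this direct verification.
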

\begin{proof}
Follows from Proposition \ref{C(B,-) adj to B times -} and the fact that $U$ is a right adjoint of $D$. A direct proof shows that the assumption that $\mathscr{B}$ is a locally compact Hausdorff space is not needed.
\end{proof}

For a continuous function $\pi:\mathscr{T}\to \mathscr{B}$, putting the subspace topology on $\Gamma(\mathscr{B},\mathscr{T})\subseteq \mathcal{C}(\mathscr{B},\mathscr{T})$ and letting $\pi_1:\mathscr{B}\times \mathscr{X}\to \mathscr{B}$ be the natural projection, we have:
\begin{proposition}\label{h cor to dot h}
Let $\pi:\mathscr{T}\to \mathscr{B}$ be continuous and $\mathscr{X}$ a topological space. Suppose $h:\mathscr{B}\times \mathscr{X}\to \mathscr{T}$ is a continuous function and $\hat{h}:\mathscr{X}\to \mathcal{C}(\mathscr{B},\mathscr{T})$ its corresponding counterpart. Then in the following diagram, the left triangle commutes if and only if there is a map $\dot{h}:\mathscr{X}\to \Gamma(\mathscr{B},\mathscr{T})$ making the right triangle commute. Furthermore $\dot h$ is continuous.
\begin{center}
$\xymatrix{\mathscr{B}\times \mathscr{X} \ar[rr]^h \ar[rd]_{\pi_1} && \mathscr{T} \ar[ld]^{\pi}\\ & \mathscr{B} &}$ \hfil
$\xymatrix{\mathscr{X} \ar[rr]^{\hat{h}} \ar[rd]_{\dot{h}} && \mathcal{C}(\mathscr{B},\mathscr{T}) \\ & \Gamma(\mathscr{B},\mathscr{T}) \ar@{^(->}[ru] &}$
\end{center}
\end{proposition}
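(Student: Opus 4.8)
The plan is to reduce both sides of the stated equivalence to a single pointwise condition, namely that for every $x\in\mathscr{X}$ the function $\hat{h}(x)=h(-,x)\colon\mathscr{B}\to\mathscr{T}$ is a global section of $\mathscr{T}$. First I would unwind the left triangle: it commutes exactly when $\pi h=\pi_1$, that is, $\pi(h(b,x))=b$ for every $(b,x)\in\mathscr{B}\times\mathscr{X}$. On the other hand, since $\hat{h}(x)(b)=h(b,x)$, the element $\hat{h}(x)\in\mathcal{C}(\mathscr{B},\mathscr{T})$ belongs to $\Gamma(\mathscr{B},\mathscr{T})$ precisely when $\pi\circ\hat{h}(x)=1_{\mathscr{B}}$, i.e.\ when $\pi(h(b,x))=b$ for every $b\in\mathscr{B}$. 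Quantifying over all $x$ shows that $\pi h=\pi_1$ holds if and only if $\hat{h}$ takes all its values in the subset $\Gamma(\mathscr{B},\mathscr{T})\subseteq\mathcal{C}(\mathscr{B},\mathscr{T})$.

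Second, I would convert this image condition into the desired factorization. If $\hat{h}$ lands in $\Gamma(\mathscr{B},\mathscr{T})$, I define $\dot{h}\colon\mathscr{X}\to\Gamma(\mathscr{B},\mathscr{T})$ by the same rule $x\mapsto\hat{h}(x)$; composing with the inclusion $i\colon\Gamma(\mathscr{B},\mathscr{T})\hookrightarrow\mathcal{C}(\mathscr{B},\mathscr{T})$ then returns $\hat{h}$, so the right triangle commutes, and $\dot{h}$ is the unique such map because $i$ is injective. Conversely, the existence of any $\dot{h}$ with $i\circ\dot{h}=\hat{h}$ forces each $\hat{h}(x)=\dot{h}(x)$ to be a section, which by the first step is equivalent to commutativity of the left triangle.

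Finally, for continuity of $\dot{h}$: since $h$ is continuous, its exponential transpose $\hat{h}\colon\mathscr{X}\to\mathcal{C}(\mathscr{B},\mathscr{T})$ is continuous (this is the direction of the exponential correspondence requiring no hypothesis on $\mathscr{B}$, as in the argument of Proposition \ref{C(B,-) adj to B times -} and \citep[Theorem 46.11]{munkres2014topology}). As $\Gamma(\mathscr{B},\mathscr{T})$ carries the subspace topology and $\dot{h}$ is the corestriction of the continuous map $\hat{h}$ to this subspace, the universal property of the subspace topology yields continuity of $\dot{h}$ immediately.

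I expect no serious obstacle; the only point needing care is that continuity of $\dot{h}$ rests on continuity of $\hat{h}$, and one must invoke the \emph{unconditional} direction of the exponential law (continuity of $h$ implies continuity of $\hat{h}$) rather than the full adjunction of Proposition \ref{C(B,-) adj to B times -}, which was stated under a local-compactness assumption on $\mathscr{B}$ that is not imposed here.
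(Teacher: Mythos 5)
Your proposal is correct and follows essentially the same route as the paper's proof: the same pointwise unwinding showing that commutativity of the left triangle is equivalent to $\hat{h}$ landing in $\Gamma(\mathscr{B},\mathscr{T})$, with $\dot{h}$ the corestriction and continuity obtained from the subspace topology. Your explicit remark that continuity of $\hat{h}$ uses only the unconditional direction of the exponential correspondence (so no local compactness of $\mathscr{B}$ is needed here) is a point the paper leaves implicit, and it is a welcome clarification rather than a deviation.
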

\begin{proof}
Suppose the left triangle commutes. For $x\in \mathscr{X}$, we have $(\pi \hat{h}(x))(b)=\pi(\hat{h}(x)(b))=\pi h(b,x)=\pi_1(b,x)=b$, so that $\pi \hat{h}(x)=1$. Thus $\hat{h}(x)\in \Gamma(\mathscr{B},\mathscr{T})$. Set $\dot{h}(x)=\hat{h}(x)$. Commutativity of the right triangle is obvious. Conversely, suppose there is a map $\dot{h}:\mathscr{X}\to \Gamma(\mathscr{B},\mathscr{T})$ making the right triangle commute. For $(b,x)\in \mathscr{B}\times \mathscr{X}$, we have $\pi h(b,x)=\pi \hat{h}(x)(b)=\pi\dot{h}(x)(b)=1(b)=\pi_1(b,x)$. Thus $\pi h=\pi_1$, as desired. Lastly the continuity of $\dot{h}$ follows from the commutativity of the right triangle.
\end{proof}

In the sequel, $\pi_{\mathscr{B}}=(\mathscr{B}\times -,\pi_1):\textbf{Top} \to \mathbf{Bundle}(\mathscr{B})$ is the functor that takes $\mathscr{X}$ to
$\mathscr{B}\times\mathscr{X}  \overset{\pi_1}{\downarrow}\mathscr{B}$.

\begin{theorem}\label{Gamma(B,-) adj to pi_1}
Let $\mathscr{B}$ be a locally compact Hausdorff space. Then the following functors are adjoints.
\begin{center}
$\xymatrix{\mathbf{Bundle}(\mathscr{B}) \ar@<0.5ex>[rr]^{\Gamma(\mathscr{B},-)} && \textbf{Top} \ar@<0.5ex>[ll]^{\pi_{\mathscr{B}}}}$
\end{center}
\end{theorem}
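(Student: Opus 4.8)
The plan is to establish the adjunction $\pi_{\mathscr{B}}\dashv\Gamma(\mathscr{B},-)$ by exhibiting, for each topological space $\mathscr{X}$ and each bundle $\mathscr{T}\overset{\pi}{\downarrow}\mathscr{B}$, a bijection
\[
\mathbf{Bundle}(\mathscr{B})\bigl(\pi_{\mathscr{B}}(\mathscr{X}),\,\mathscr{T}\overset{\pi}{\downarrow}\mathscr{B}\bigr)\;\cong\;\textbf{Top}\bigl(\mathscr{X},\,\Gamma(\mathscr{B},\mathscr{T})\bigr)
\]
that is natural in both arguments. Essentially all of the analytic content has already been isolated in Propositions \ref{C(B,-) adj to B times -} and \ref{h cor to dot h}; what remains is to splice them together and to verify naturality.

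First I would unwind the left-hand hom-set. By definition of $\pi_{\mathscr{B}}$ and of the slice category $\mathbf{Bundle}(\mathscr{B})=\textbf{Top}/\mathscr{B}$, a morphism $\pi_{\mathscr{B}}(\mathscr{X})\to\mathscr{T}\overset{\pi}{\downarrow}\mathscr{B}$ is exactly a continuous map $h:\mathscr{B}\times\mathscr{X}\to\mathscr{T}$ with $\pi h=\pi_1$, that is, one making the left triangle of Proposition \ref{h cor to dot h} commute. Proposition \ref{C(B,-) adj to B times -}---the only place the hypothesis that $\mathscr{B}$ is locally compact Hausdorff is used---gives a natural bijection between such continuous $h$ (forgetting the triangle condition) and continuous maps $\hat{h}:\mathscr{X}\to\mathcal{C}(\mathscr{B},\mathscr{T})$, via $\hat{h}(x)(b)=h(b,x)$.

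Next I would cut this bijection down to the sub-hom-set singled out by the condition $\pi h=\pi_1$. This is precisely Proposition \ref{h cor to dot h}: the left triangle commutes if and only if $\hat{h}$ factors through the inclusion $\Gamma(\mathscr{B},\mathscr{T})\hookrightarrow\mathcal{C}(\mathscr{B},\mathscr{T})$, yielding a continuous $\dot{h}:\mathscr{X}\to\Gamma(\mathscr{B},\mathscr{T})$, and conversely every continuous map into $\Gamma(\mathscr{B},\mathscr{T})$ arises this way. Composing the two correspondences gives the desired bijection $h\mapsto\dot{h}$, whose inverse sends $\dot{h}$ first along the inclusion into $\mathcal{C}(\mathscr{B},\mathscr{T})$ and then back across the exponential bijection of Proposition \ref{C(B,-) adj to B times -}.

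The remaining step, which I expect to be the only genuinely fiddly part, is naturality. For naturality in $\mathscr{X}$ I would precompose with a continuous $g:\mathscr{X}'\to\mathscr{X}$, whose image under $\pi_{\mathscr{B}}$ is $1_{\mathscr{B}}\times g$, and verify that the section associated to $h\circ(1_{\mathscr{B}}\times g)$ is $\dot{h}\circ g$; for naturality in the bundle I would postcompose with a bundle morphism $k:\mathscr{T}\overset{\pi}{\downarrow}\mathscr{B}\to\mathscr{S}\overset{\psi}{\downarrow}\mathscr{B}$ (so $\psi k=\pi$), observe that $\psi k=\pi$ forces $k\sigma$ to be a section whenever $\sigma$ is---so that $\Gamma(\mathscr{B},k)=k\circ-$ is well defined---and verify that the section associated to $k\circ h$ is $\Gamma(\mathscr{B},k)\circ\dot{h}$. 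Both identities reduce to the naturality of the exponential correspondence of Proposition \ref{C(B,-) adj to B times -} together with the evident compatibility of the section condition with pre- and post-composition. This makes $h\mapsto\dot{h}$ a natural isomorphism of hom-functors and thereby establishes $\pi_{\mathscr{B}}\dashv\Gamma(\mathscr{B},-)$.
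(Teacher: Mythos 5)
Your proposal is correct and follows exactly the route the paper takes: its proof of Theorem \ref{Gamma(B,-) adj to pi_1} is precisely ``Follows from Propositions \ref{C(B,-) adj to B times -} and \ref{h cor to dot h}'', and you have spliced those same two results together in the intended way, correctly identifying slice-category morphisms $\pi_{\mathscr{B}}(\mathscr{X})\to\mathscr{T}\overset{\pi}{\downarrow}\mathscr{B}$ with maps $h$ satisfying $\pi h=\pi_{1}$, getting the direction of the adjunction right, and locating the locally compact Hausdorff hypothesis where it is actually used. Your explicit treatment of naturality in both variables is more detailed than anything the paper records, but it is the verification the paper's one-line proof tacitly assumes.
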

\begin{proof}
Follows from Propositions \ref{C(B,-) adj to B times -} and \ref{h cor to dot h}.
\end{proof}

\begin{corollary}\label{bundugset}
Let $\mathscr{B}$ be a topological space. The following are adjoint functors.
\begin{center}
$\xymatrix{\mathbf{Bundle}(\mathscr{B}) \ar@<0.5ex>[rr]^{U\circ\Gamma(\mathscr{B},-)} && \textbf{Set} \ar@<0.5ex>[ll]^{\pi_{\mathscr{B}}\circ D}}$
\end{center}
\end{corollary}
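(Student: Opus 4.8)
The plan is to obtain the asserted adjunction by \emph{restricting} the $\textbf{Set}$--$\textbf{Top}$ adjunction of Corollary \ref{U C(B,-) adj to (B times -) D} to the slice category $\mathbf{Bundle}(\mathscr{B})=\textbf{Top}/\mathscr{B}$. Concretely, for a set $S$ and a bundle $\mathscr{T}\overset{\pi}{\downarrow}\mathscr{B}$, I would exhibit a bijection, natural in both arguments,
$$\mathbf{Bundle}(\mathscr{B})\bigl(\pi_{\mathscr{B}}D(S),\,\mathscr{T}\overset{\pi}{\downarrow}\mathscr{B}\bigr)\;\cong\;\textbf{Set}\bigl(S,\,U\Gamma(\mathscr{B},\mathscr{T})\bigr),$$
which exhibits $\pi_{\mathscr{B}}\circ D$ as left adjoint to $U\circ\Gamma(\mathscr{B},-)$.

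First I would unwind the left-hand side. A morphism in $\mathbf{Bundle}(\mathscr{B})$ from $\pi_{\mathscr{B}}D(S)=\mathscr{B}\times D(S)\overset{\pi_1}{\downarrow}\mathscr{B}$ to $\mathscr{T}\overset{\pi}{\downarrow}\mathscr{B}$ is precisely a continuous map $h:\mathscr{B}\times D(S)\to\mathscr{T}$ with $\pi h=\pi_1$, i.e.\ the left triangle of Proposition \ref{h cor to dot h} (taken with $\mathscr{X}=D(S)$) commutes. Next I would invoke the $\textbf{Top}$-level bijection of Corollary \ref{U C(B,-) adj to (B times -) D}, namely $\textbf{Top}(\mathscr{B}\times D(S),\mathscr{T})\cong\textbf{Set}(S,U\mathcal{C}(\mathscr{B},\mathscr{T}))$, which sends $h$ to $s\mapsto h(-,s)$ with inverse $g\mapsto\bigl((b,s)\mapsto g(s)(b)\bigr)$. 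Because $D(S)$ carries the discrete topology, $\mathscr{B}\times D(S)$ is the disjoint union $\coprod_{s\in S}\mathscr{B}\times\{s\}$, so a map out of it is continuous exactly when each $h(-,s)$ is; this is what makes the correspondence a bijection with \emph{no} local-compactness hypothesis, as recorded in the remark following Corollary \ref{U C(B,-) adj to (B times -) D}. Under this bijection the condition $\pi h=\pi_1$ translates, via Proposition \ref{h cor to dot h}, into the requirement that each $h(-,s)=\dot h(s)$ be a global section, i.e.\ $h(-,s)\in\Gamma(\mathscr{B},\mathscr{T})$ for every $s\in S$. Hence the $\textbf{Top}$-level bijection restricts to
$$\{\,h\mid \pi h=\pi_1\,\}\;\cong\;\textbf{Set}\bigl(S,\,U\Gamma(\mathscr{B},\mathscr{T})\bigr),$$
which is exactly the displayed isomorphism, since the left set is $\mathbf{Bundle}(\mathscr{B})\bigl(\pi_{\mathscr{B}}D(S),\mathscr{T}\overset{\pi}{\downarrow}\mathscr{B}\bigr)$.

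Finally I would check naturality in $S$ and in $\mathscr{T}\overset{\pi}{\downarrow}\mathscr{B}$, which is inherited from the naturality already established in Proposition \ref{C(B,-) adj to B times -} and Corollary \ref{U C(B,-) adj to (B times -) D} (precomposition in $S$, postcomposition by bundle morphisms in $\mathscr{T}$). I do not expect a genuine obstacle here; the one point worth flagging — and the reason the locally compact Hausdorff hypothesis of Theorem \ref{Gamma(B,-) adj to pi_1} can now be dropped entirely — is the continuity of the transposes: the exponential adjoint $\hat h$ and the section-valued map $\dot h$ are automatically continuous because their domain $D(S)$ is discrete, so none of the point-set topology that forced local compactness in the general exponential law of Proposition \ref{C(B,-) adj to B times -} is needed in this restricted setting.
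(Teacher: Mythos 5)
Your proof is correct, and it is in fact more complete than what the paper writes down. The paper's own proof of Corollary \ref{bundugset} is a one-liner: it composes the adjunction $\pi_{\mathscr{B}}\dashv \Gamma(\mathscr{B},-)$ of Theorem \ref{Gamma(B,-) adj to pi_1} with the adjunction $D\dashv U$, and then merely \emph{asserts} that ``a direct proof shows that the assumption that $\mathscr{B}$ is locally compact Hausdorff is not needed,'' without supplying that direct argument. Since Theorem \ref{Gamma(B,-) adj to pi_1} is only proved under the locally compact Hausdorff hypothesis, the paper's cited derivation covers only that case, whereas the corollary is stated for an arbitrary topological space; your argument is precisely the missing general-case proof. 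Concretely, you bypass Theorem \ref{Gamma(B,-) adj to pi_1} entirely and instead restrict the hypothesis-free adjunction of Corollary \ref{U C(B,-) adj to (B times -) D} to the slice $\mathbf{Bundle}(\mathscr{B})=\textbf{Top}/\mathscr{B}$, using the set-level content of Proposition \ref{h cor to dot h} to translate the bundle condition $\pi h=\pi_1$ into ``each transpose $h(-,s)$ is a global section,'' with the discreteness of $D(S)$ (so that $\mathscr{B}\times D(S)\cong\coprod_{s\in S}\mathscr{B}\times\{s\}$) guaranteeing the bijection without any exponential-law topology. What each route buys: the paper's composition of adjunctions is shorter when the hypothesis holds, while yours proves the statement as actually stated and isolates the reason local compactness is dispensable — it was only ever needed for the compact-open exponential law with a non-discrete exponent. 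One minor remark: since the right adjoint here is $U\circ\Gamma(\mathscr{B},-)$ with values in $\textbf{Set}$, the transpose $\dot{h}$ is just a function, so your closing point about the automatic continuity of $\hat{h}$ and $\dot{h}$ is slightly more than is needed; the only continuity at stake is that of $\bar{g}:\mathscr{B}\times D(S)\to\mathscr{T}$ for a set map $g$, which your disjoint-union observation already handles.
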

\begin{proof}
Follows from Theorem \ref{Gamma(B,-) adj to pi_1}. However a direct proof shows that the assumption that $\mathscr{B}$ is locally compact Hausdorff is not needed.
\end{proof}

\begin{theorem}\label{pi1 adj to U_B}
Let $\mathscr{B}$ be a locally compact Hausdorff space. The following functors are adjoints,
\begin{center}
$\xymatrix{\textbf{Top} \ar@<0.5ex>[rr]^{\pi_\mathscr{B}} && \mathbf{Bundle}(\mathscr{B}) \ar@<0.5ex>[ll]^{U_{\mathscr{B}}}}$
\end{center}
where $U_\mathscr{B}:\mathbf{Bundle}(\mathscr{B})\to \textbf{Top}$ is the forgetful functor.
\end{theorem}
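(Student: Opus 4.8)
The plan is to identify $U_{\mathscr{B}}$ as the \emph{left} adjoint and $\pi_{\mathscr{B}}$ as the \emph{right} adjoint, which is the orientation opposite to Theorem~\ref{Gamma(B,-) adj to pi_1}; there is no conflict in $\pi_{\mathscr{B}}$ carrying adjoints on both sides. Concretely, I would produce, for each bundle $(\mathscr{T},\pi)$ and each space $\mathscr{X}$, a bijection
\[
\mathbf{Bundle}(\mathscr{B})\big((\mathscr{T},\pi),\,\pi_{\mathscr{B}}(\mathscr{X})\big)\;\cong\;\textbf{Top}\big(U_{\mathscr{B}}(\mathscr{T},\pi),\,\mathscr{X}\big),
\]
natural in both arguments.

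First I would unwind the left-hand side. Since $\pi_{\mathscr{B}}(\mathscr{X})=(\mathscr{B}\times\mathscr{X},\pi_1)$, a morphism there from $(\mathscr{T},\pi)$ is a continuous map $k:\mathscr{T}\to\mathscr{B}\times\mathscr{X}$ with $\pi_1 k=\pi$. By the universal property of the product topology, $k$ is determined by its two continuous components $\pi_1 k$ and $\pi_2 k$, and conversely every such pair assembles into a continuous map; the slice condition forces the first component to be $\pi$, leaving the second component $g:=\pi_2 k:\mathscr{T}\to\mathscr{X}$ entirely free. Thus $k\mapsto \pi_2 k$ is a bijection onto $\textbf{Top}(\mathscr{T},\mathscr{X})=\textbf{Top}(U_{\mathscr{B}}(\mathscr{T},\pi),\mathscr{X})$, with inverse $g\mapsto\langle\pi,g\rangle$.

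Next I would check naturality of this assignment in $(\mathscr{T},\pi)$ and in $\mathscr{X}$; both reduce to the naturality of the product projection $\pi_2$ and the functoriality of $\mathscr{B}\times-$, and are routine. Equivalently, and perhaps more cleanly, I would present the adjunction via its unit and counit: the unit at $(\mathscr{T},\pi)$ is the bundle morphism $\langle\pi,1_{\mathscr{T}}\rangle:(\mathscr{T},\pi)\to(\mathscr{B}\times\mathscr{T},\pi_1)$ and the counit at $\mathscr{X}$ is the projection $\pi_2:\mathscr{B}\times\mathscr{X}\to\mathscr{X}$, and then verify the two triangle identities $\pi_2\circ\langle\pi,1_{\mathscr{T}}\rangle=1_{\mathscr{T}}$ and $(1_{\mathscr{B}}\times\pi_2)\circ\langle\pi_1,1_{\mathscr{B}\times\mathscr{X}}\rangle=1_{\mathscr{B}\times\mathscr{X}}$, each a one-line computation.

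The step I would flag is simply the conceptual one of recognizing the direction: this is the canonical adjunction between the domain (forgetful) functor of a slice category and the ``product with the base'' functor, available in any category with binary products. Consequently no genuine obstacle arises --- the product universal property does all the work --- and, exactly as remarked for Corollary~\ref{bundugset}, the local compactness and Hausdorffness of $\mathscr{B}$ are not actually needed.
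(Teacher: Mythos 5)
Your proposal is correct and takes essentially the same route as the paper: the paper's proof exhibits exactly your bijection $\textbf{Top}\big(U_{\mathscr{B}}(X,f),Y\big)\cong \mathbf{Bundle}(\mathscr{B})\big((X,f),\pi_{\mathscr{B}}(Y)\big)$, sending $g\mapsto\langle f,g\rangle$ and $k\mapsto \pi_{2}k$, with the same orientation $U_{\mathscr{B}}\dashv\pi_{\mathscr{B}}$. Your added unit/counit packaging and your observation that local compactness and Hausdorffness are never used (this being the generic slice-category adjunction from binary products) are accurate refinements, though the paper does not record them for this theorem.
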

\begin{proof}
Given a bundle $(X,f)$ over $\mathscr{B}$ and a topological space $Y$, $\textbf{Top}(U_{\mathscr{B}}(X,f),Y)\cong \mathbf{Bundle}(\mathscr{B})((X,f),\pi_{\mathscr{B}}(Y))$, as can be seen by the assignment that sends each continuous function $g:U_{\mathscr{B}}(X,f)\to Y$ to the continuous function $\langle f,g\rangle :(X,f)\to \pi_{\mathscr{B}}(Y)$ and each $k:(X,f)\to \pi_{\mathscr{B}}(Y)$ to
$\pi_{2} k$. The naturality follows easily.
\end{proof}

The adjunctions of \ref{C(B,-) adj to B times -}, \ref{Gamma(B,-) adj to pi_1} and \ref{pi1 adj to U_B} are related as follows.
\begin{theorem}\label{adjs related}
Let $\mathscr{B}$ be a locally compact Hausdorff space. In the following diagram, the upper and lower triangles commute.
\begin{center}
$\xymatrix{\textbf{Top} \ar@<0.5ex>[rr]^{\pi_{\mathscr{B}}} \ar@/^2.5pc/[rrrr]^{\mathcal{C}(\mathscr{B},-)} && \mathbf{Bundle}(\mathscr{B}) \ar@<0.5ex>[ll]^{U_{\mathscr{B}}}
\ar@<0.5ex>[rr]^{\Gamma(\mathscr{B},-)} && \textbf{Top} \ar@<0.5ex>[ll]^{\pi_{\mathscr{B}}} \ar@/^2.5pc/[llll]^{\mathscr{B}\times -}
}$
\end{center}
\end{theorem}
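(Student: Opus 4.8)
The statement to prove (Theorem \ref{adjs related}) asserts that in the displayed diagram, the upper triangle (involving $\mathcal{C}(\mathscr{B},-)$, $\pi_\mathscr{B}$, and $\Gamma(\mathscr{B},-)$) and the lower triangle (involving $\mathscr{B}\times -$, $U_\mathscr{B}$, and $\pi_\mathscr{B}$) commute.

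\begin{proof}[Proof proposal]
The plan is to verify the two triangle commutativities directly at the level of functors, checking agreement on objects and on morphisms; these are equalities of composite functors, so no naturality-of-adjunction machinery is needed beyond unwinding the definitions given earlier in the section. The lower triangle reads $U_{\mathscr{B}}\circ \pi_{\mathscr{B}} = \mathscr{B}\times -$, and the upper triangle reads $\Gamma(\mathscr{B},-)\circ \pi_{\mathscr{B}} = \mathcal{C}(\mathscr{B},-)$. I would treat these one at a time.

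First I would dispatch the lower triangle, which is essentially immediate from the definition of $\pi_{\mathscr{B}}=(\mathscr{B}\times -,\pi_1)$ recorded just before Theorem \ref{Gamma(B,-) adj to pi_1}: this functor sends a space $\mathscr{X}$ to the bundle $\mathscr{B}\times\mathscr{X}\overset{\pi_1}{\downarrow}\mathscr{B}$, and the forgetful functor $U_{\mathscr{B}}$ discards the projection $\pi_1$, returning the total space $\mathscr{B}\times\mathscr{X}$. On objects this gives $U_{\mathscr{B}}\pi_{\mathscr{B}}(\mathscr{X})=\mathscr{B}\times\mathscr{X}$ exactly. On a morphism $g:\mathscr{X}\to\mathscr{Y}$, $\pi_{\mathscr{B}}(g)$ is the bundle morphism $1_{\mathscr{B}}\times g$, and $U_{\mathscr{B}}$ returns the same underlying continuous map $1_{\mathscr{B}}\times g=(\mathscr{B}\times -)(g)$, so the two functors agree on morphisms too.

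Next I would handle the upper triangle $\Gamma(\mathscr{B},-)\circ\pi_{\mathscr{B}}=\mathcal{C}(\mathscr{B},-)$. On a space $\mathscr{X}$, $\pi_{\mathscr{B}}(\mathscr{X})$ is the bundle $\mathscr{B}\times\mathscr{X}\overset{\pi_1}{\downarrow}\mathscr{B}$, so I must identify the global sections $\Gamma(\mathscr{B},\mathscr{B}\times\mathscr{X})$ of the first projection with $\mathcal{C}(\mathscr{B},\mathscr{X})$. A section $\sigma:\mathscr{B}\to\mathscr{B}\times\mathscr{X}$ satisfies $\pi_1\sigma=1$, which forces $\sigma(b)=(b,s(b))$ for a uniquely determined map $s:\mathscr{B}\to\mathscr{X}$; continuity of $\sigma$ is equivalent to continuity of $s$, yielding a natural set-bijection $\Gamma(\mathscr{B},\mathscr{B}\times\mathscr{X})\cong\mathcal{C}(\mathscr{B},\mathscr{X})$. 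The point requiring care, and the step I expect to be the main obstacle, is that this bijection is a \emph{homeomorphism} for the relevant topologies: $\Gamma(\mathscr{B},\mathscr{B}\times\mathscr{X})$ carries the subspace topology from $\mathcal{C}(\mathscr{B},\mathscr{B}\times\mathscr{X})$ with its compact-open topology, and I would need to show this coincides with the compact-open topology on $\mathcal{C}(\mathscr{B},\mathscr{X})$. I would verify this by comparing subbasic sets $S(C,V)$: an open $V\subseteq\mathscr{X}$ corresponds to the open strip $\mathscr{B}\times V$, and tracing how the projection-induced homeomorphism $\mathscr{B}\times V\cong$ (the relevant open in $\mathscr{B}\times\mathscr{X}$) acts on the subbases should show the two topologies have matching subbases.

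Finally I would confirm that the identification is natural in $\mathscr{X}$ by checking it on morphisms: for $g:\mathscr{X}\to\mathscr{Y}$, the functor $\Gamma(\mathscr{B},-)$ applied to $\pi_{\mathscr{B}}(g)=1_{\mathscr{B}}\times g$ is post-composition with $1_{\mathscr{B}}\times g$ on sections, which under the bijection $\sigma\mapsto s$ becomes post-composition with $g$, i.e.\ exactly $\mathcal{C}(\mathscr{B},g)=g\circ -$. Thus both triangles commute, as claimed. Since the functorial identities are what the diagram asserts, and the compact-open topology comparison is the only nontrivial verification, this completes the proof.
\end{proof}
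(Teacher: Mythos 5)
The paper prints no proof of Theorem \ref{adjs related} at all: the sentence immediately preceding it (``The adjunctions of \ref{C(B,-) adj to B times -}, \ref{Gamma(B,-) adj to pi_1} and \ref{pi1 adj to U_B} are related as follows'') indicates the intended argument is purely formal. Your lower triangle is indeed an equality of functors on the nose, exactly as you verify, and once you have it the upper triangle follows with no point-set topology whatsoever: $U_{\mathscr{B}}\circ\pi_{\mathscr{B}}=\mathscr{B}\times -$ exhibits $\Gamma(\mathscr{B},-)\circ\pi_{\mathscr{B}}$ (a composite of the right adjoints from Theorems \ref{pi1 adj to U_B} and \ref{Gamma(B,-) adj to pi_1}) as a right adjoint of $\mathscr{B}\times -$, and $\mathcal{C}(\mathscr{B},-)$ is also right adjoint to $\mathscr{B}\times -$ by Proposition \ref{C(B,-) adj to B times -}; uniqueness of adjoints gives the upper triangle up to natural isomorphism, which is the only sense in which it can hold (as your own bijection $\sigma\mapsto\pi_2\sigma$ shows, the two functors do not literally agree on objects). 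Your direct verification is a legitimately different route; what the formal argument buys is that it sidesteps entirely the one step of yours that is genuinely delicate.

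That step is where your sketch has a real soft spot. You propose to show the transported topology on $\Gamma(\mathscr{B},\mathscr{B}\times\mathscr{X})$ agrees with the compact-open topology on $\mathcal{C}(\mathscr{B},\mathscr{X})$ by ``matching subbases'' via the strips $\mathscr{B}\times V$. But the subspace topology on $\Gamma(\mathscr{B},\mathscr{B}\times\mathscr{X})\subseteq\mathcal{C}(\mathscr{B},\mathscr{B}\times\mathscr{X})$ is generated by \emph{all} sets $S(C,W)$ with $W$ open in $\mathscr{B}\times\mathscr{X}$, and a general such $W$ is not a strip, so the subbases do not match and one containment does not follow by inspection. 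The easy direction (strips give continuity of $\sigma\mapsto\pi_2\sigma$) is fine; for the converse you must show that if $(1,s)(C)\subseteq W$ then $s$ has a compact-open neighborhood all of whose members $t$ satisfy $(1,t)(C)\subseteq W$. This needs a tube-lemma-style subdivision: cover each $c\in C$ by $U_c\times V_c\subseteq W$ with $s(U_c)\subseteq V_c$, use regularity (available since $\mathscr{B}$ is locally compact Hausdorff, or already since the compact $C$ is Hausdorff hence regular) to shrink to relatively open $O_c$ with $\mathrm{cl}_C(O_c)\subseteq U_c\cap C$, extract a finite subcover, and set $C_i=\mathrm{cl}_C(O_{c_i})$; then $s\in\bigcap_i S(C_i,V_{c_i})\subseteq\{t\mid (1,t)(C)\subseteq W\}$. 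This is exactly where the theorem's locally compact Hausdorff hypothesis enters your argument, and without it the phrase ``matching subbases'' would leave the proof incomplete. With this paragraph supplied (or with the two-line uniqueness-of-adjoints argument substituted), your proof is correct, including the naturality check on morphisms.
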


Let $\mathfrak{A}=(A;\vee,\wedge,\odot,\rightarrow,0,1)$ be a residuated lattice and $\tau$ be a topology on $A$. We recall \cite[Definition 3.4]{rasouli2020topological} that the pair $\mathfrak{A}_{\tau}=(\mathfrak{A};\tau)$ is said to be a \textit{topological residuated lattice} provided that each operation of $\mathfrak{A}$ is continuous, i.e., the fundamental operation $\sigma$ of $\mathfrak{A}$ is continuous as a function from $A\times A$ to $A$. The category of topological residuated lattices shall be denoted by $\textbf{RL-Top}$.
\begin{theorem}\label{C(B,-) fr RLTop to RLTop}
Let $\mathscr{B}$ be a topological space. The functor $\mathscr{C}(\mathscr{B},-):\textbf{Top} \to \textbf{Top}$ lifts to a functor $\mathscr{C}(\mathscr{B},-):\textbf{RL-Top} \to \textbf{RL-Top}$ in the sense the following square commutes:
\begin{center}
$\xymatrix{\textbf{RL-Top} \ar[d]_{U} \ar[rr]^{\mathscr{C}(\mathscr{B},-)} && \textbf{RL-Top} \ar[d]^{U} \\ \textbf{Top} \ar[rr]_{\mathscr{C}(\mathscr{B},-)} && \textbf{Top}
}$
\end{center}
where $U$ is the forgetful functor.
\end{theorem}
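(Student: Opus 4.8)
The plan is to verify four things: that the underlying topological space of $\mathcal{C}(\mathscr{B},\mathfrak{A})$ is exactly $\mathcal{C}(\mathscr{B},U\mathfrak{A})$, so that the square commutes on the nose; that the pointwise operations make $\mathcal{C}(\mathscr{B},A)$ a residuated lattice; that these operations are continuous for the compact-open topology; and finally that a continuous morphism of residuated lattices is sent to a continuous morphism of residuated lattices. The commuting square is immediate once the operations are defined pointwise, since forgetting the residuated-lattice structure leaves precisely the compact-open space $\mathcal{C}(\mathscr{B},U\mathfrak{A})$ produced by the bottom functor.

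For the algebraic structure I would realize $\mathcal{C}(\mathscr{B},A)$ as a subalgebra of the power $A^{\mathscr{B}}$. Since $\mathbf{RL}$ is a variety, the product $A^{\mathscr{B}}=\prod_{\mathscr{B}}\mathfrak{A}$ with the pointwise operations is a residuated lattice. The constant maps at $0$ and $1$ are continuous, and if $f,g\in\mathcal{C}(\mathscr{B},A)$ then for $\diamond\in\{\vee,\wedge,\odot,\to\}$ the map $f\diamond g=\diamond\circ\langle f,g\rangle$ is continuous, because $\langle f,g\rangle:\mathscr{B}\to A\times A$ is continuous and $\diamond:A\times A\to A$ is continuous by the hypothesis that $\mathfrak{A}_{\tau}$ is a topological residuated lattice. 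Thus $\mathcal{C}(\mathscr{B},A)$ contains the constants and is closed under the fundamental operations, hence is a subuniverse and therefore a residuated lattice.

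To see that each induced operation $\diamond_{\mathcal{C}}:\mathcal{C}(\mathscr{B},A)\times\mathcal{C}(\mathscr{B},A)\to\mathcal{C}(\mathscr{B},A)$ is continuous, I would factor it as $\diamond_{\mathcal{C}}=\mathcal{C}(\mathscr{B},\diamond)\circ\Delta$, where $\Delta(f,g)=\langle f,g\rangle$ lands in $\mathcal{C}(\mathscr{B},A\times A)$ and $\mathcal{C}(\mathscr{B},\diamond)$ is the functorial action on the continuous map $\diamond$. The second factor is already known to be continuous: the computation $\mathcal{C}(\mathscr{B},\diamond)^{\leftarrow}(S(C,V))=S(C,\diamond^{\leftarrow}(V))$ from the proof of Proposition \ref{C(B,-) adj to B times -} applies verbatim. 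The nullary operations single out the two constant maps, for which there is nothing to check. For morphisms, a continuous morphism $f:\mathfrak{A}\to\mathfrak{B}$ is sent to $\mathcal{C}(\mathscr{B},f)=f\circ-$, continuous by the same functoriality computation and preserving the pointwise operations because $f$ preserves the operations of $\mathfrak{A}$ fibrewise; preservation of identities and composites is inherited from the underlying $\textbf{Top}$-functor.

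The main obstacle is the continuity of the comparison map $\Delta:\mathcal{C}(\mathscr{B},A)\times\mathcal{C}(\mathscr{B},A)\to\mathcal{C}(\mathscr{B},A\times A)$, that is, the fact that $\mathcal{C}(\mathscr{B},-)$ preserves the binary product $A\times A$. Unwinding a subbasic neighbourhood $S(C,O)$ with $C$ compact in $\mathscr{B}$ and $O$ open in $A\times A$, and using continuity of $\diamond$ to produce boxes $U_c\times W_c\subseteq O$ around each $(f_0(c),g_0(c))$ together with the open sets $f_0^{\leftarrow}(U_c)\cap g_0^{\leftarrow}(W_c)$, one reduces to a finite subcover of $C$. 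The delicate point is then to refine this finite open cover of the compact set $C$ by \emph{compact} pieces $K_i$ subordinate to it, so as to build a basic neighbourhood of the form $\bigl(\bigcap_i S(K_i,U_{c_i})\bigr)\times\bigl(\bigcap_i S(K_i,W_{c_i})\bigr)$ lying inside $\Delta^{\leftarrow}(S(C,O))$. This shrinking of a finite open cover of $C$ is exactly where one needs the compact subsets of $\mathscr{B}$ to be well behaved (it is automatic, for instance, when compact subsets are normal, as when $\mathscr{B}$ is Hausdorff); every other step in the argument is formal.
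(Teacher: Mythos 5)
Your proof is correct under the caveat you yourself flag, and structurally it follows the same skeleton as the paper's: you define the operations pointwise and factor the induced binary operation on $\mathcal{C}(\mathscr{B},A)$ as $\mathcal{C}(\mathscr{B},\diamond)\circ\Delta$ through $\mathcal{C}(\mathscr{B},\mathfrak{A}\times\mathfrak{A})$, with continuity of $\mathcal{C}(\mathscr{B},\diamond)$ coming from the computation $\mathcal{C}(\mathscr{B},\diamond)^{\leftarrow}(S(C,V))=S(C,\diamond^{\leftarrow}(V))$ of Proposition \ref{C(B,-) adj to B times -}; your $\Delta$ is the paper's $\theta$. The one substantive divergence is how continuity of this pairing map is established. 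The paper disposes of it in one line by asserting, without proof, that the sets $S(C,U\times V)$ form a subbasis for the compact-open topology on $\mathcal{C}(\mathscr{B},\mathfrak{A}\times\mathfrak{A})$, and then computing $\theta^{\leftarrow}(S(C,U\times V))=S(C,U)\times S(C,V)$. Your covering argument is precisely the missing proof of that subbasis assertion, and you correctly isolate its cost: after extracting a finite subcover of $C$ by the sets $f_0^{\leftarrow}(U_c)\cap g_0^{\leftarrow}(W_c)$, one must shrink this finite open cover of the compact set $C$ to a cover by \emph{compact} sets $K_i$ subordinate to it. That shrinking is available when compact subsets of $\mathscr{B}$ are normal --- in particular when $\mathscr{B}$ is Hausdorff, since a finite open cover of a compact Hausdorff space admits a closed, hence compact, shrinking --- but it is not available in an arbitrary topological space: in the one-point compactification of $\mathbb{Q}$, which is compact but not Hausdorff, the open cover consisting of $\mathbb{Q}$ and of the complement of a convergent sequence together with its limit admits no compact shrinking. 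So your argument is careful exactly where the paper's is glib, at the price of a mild hypothesis on $\mathscr{B}$ (Hausdorffness suffices, and is anyway assumed in the neighbouring Proposition \ref{C(B,-) adj to B times -}) that the theorem's bare ``let $\mathscr{B}$ be a topological space'' does not supply; in that generality the paper's subbasis claim is at best unproved. Your preliminary step realizing $\mathcal{C}(\mathscr{B},A)$ as a subalgebra of the power $A^{\mathscr{B}}$ is also a tidy way of discharging the well-definedness issues (closure of the continuous maps under the pointwise operations, and inheritance of the residuated-lattice identities) that the paper leaves implicit.
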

\begin{proof}
Let $\mathfrak{A}$ be a topological residuated lattice and $\sigma$ a binary fundamental operation. We define the operation $\overline{\sigma}$ on $\mathscr{C}(\mathscr{B},\mathfrak{A})$, pointwisely. To show $\overline{\sigma}$ is continuous, we observe that it is the composition of the two maps shown below.
\[\xymatrix{
\mathscr{C}(\mathscr{B},\mathfrak{A})\times \mathscr{C}(\mathscr{B},\mathfrak{A}) \ar@{->}[rd]_{\theta} \ar@{->}[rr]^{\overline{\sigma}} &  & \mathscr{C}(\mathscr{B},\mathfrak{A}) \\
 & \mathscr{C}(\mathscr{B},\mathfrak{A}\times \mathfrak{A}) \ar@{->}[ru]_{{\mathscr{C}(\mathscr{B},\sigma)}} &
}
\]
$\mathscr{C}(\mathscr{B},\sigma)$ is continuous by Proposition \ref{C(B,-) adj to B times -} and the fact that $\sigma$ is continuous. So we need to show that $\theta$ is continuous. Since the collection
\[\{S(C,U\times V) \mid C \hbox{ compact subset of } \mathscr{B} \hbox{ and } U, V \hbox{open subsets of } A\}\]
constitutes a subbasis for $\mathscr{C}(\mathscr{B},\mathfrak{A}\times \mathfrak{A})$ as well and
$\theta^{\leftarrow}(S(C,U\times V))=S(C,U)\times S(C,V)$, continuity of $\theta$ follows. This states that $\mathscr{C}(\mathscr{B},\mathfrak{A})$ is a topological residuated lattice. One can also verify that $\mathscr{C}(\mathscr{B},-)$ sends a continuous morphism $h:\mathfrak{A}\to \mathfrak{B}$ to a continuous morphism $\mathscr{C}(\mathscr{B},h):\mathscr{C}(\mathscr{B},\mathfrak{A})\to \mathscr{C}(\mathscr{B},\mathfrak{B})$. Preservation of identities and composition follows from that of $\mathscr{C}(\mathscr{B},-):\textbf{Top} \to \textbf{Top}$.
\end{proof}
\begin{theorem}
Let $\mathscr{B}$ be a topological space. The functor $U\circ \mathscr{C}(\mathscr{B},-):\textbf{Top} \to \textbf{Set}$ lifts to a functor $U\circ \mathscr{C}(\mathscr{B},-):\textbf{RL-Top} \to \textbf{RL}$ in the sense that the following squares commute:
\[
\xymatrix{\textbf{RL-Top} \ar[d]_{U} \ar[rr]^{\mathscr{C}(\mathscr{B},-)} && \textbf{RL-Top} \ar[d]|{U} \ar[r]^{U} & \textbf{RL} \ar[d]^{U} \\ \textbf{Top} \ar[rr]_{\mathscr{C}(\mathscr{B},-)} && \textbf{Top} \ar[r]_{U} & \textbf{Set}
}
\]
where the functors $U$ are the obvious forgetful functors.
\end{theorem}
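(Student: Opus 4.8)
The plan is to construct the required functor simply as the composite of the functor furnished by the previous theorem with the forgetful functor $U:\textbf{RL-Top}\to\textbf{RL}$, and then to verify that the diagram decomposes into two commuting squares. Because a composite of functors is again a functor, post-composing $\mathscr{C}(\mathscr{B},-):\textbf{RL-Top}\to\textbf{RL-Top}$ from Theorem \ref{C(B,-) fr RLTop to RLTop} with the forgetful functor $U:\textbf{RL-Top}\to\textbf{RL}$ at once yields a functor $U\circ\mathscr{C}(\mathscr{B},-):\textbf{RL-Top}\to\textbf{RL}$; explicitly, it sends a topological residuated lattice $\mathfrak{A}_{\tau}$ to the residuated lattice carried by $\mathscr{C}(\mathscr{B},\mathfrak{A})$ under the pointwise operations, with its topology discarded.

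The left-hand square of the diagram is precisely the assertion of Theorem \ref{C(B,-) fr RLTop to RLTop}, so it commutes and nothing further is needed there. For the right-hand square I would note that all four of its edges are forgetful functors, and that the two paths from $\textbf{RL-Top}$ to $\textbf{Set}$ coincide: along the upper path one first forgets the topology (landing in $\textbf{RL}$) and then the residuated-lattice operations, while along the lower path one first forgets the operations (landing in $\textbf{Top}$) and then the topology. In either order one is left with the bare underlying set, and each edge acts as the identity on underlying functions, so the square commutes on objects and on morphisms alike.

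Pasting the two squares then shows that the outer rectangle commutes, which is exactly the statement that $U\circ\mathscr{C}(\mathscr{B},-):\textbf{RL-Top}\to\textbf{RL}$ is a lift of $U\circ\mathscr{C}(\mathscr{B},-):\textbf{Top}\to\textbf{Set}$. I expect no genuine obstacle here: the substantive work --- that the pointwise operations are continuous and assemble $\mathscr{C}(\mathscr{B},\mathfrak{A})$ into a (topological) residuated lattice --- was already carried out in the previous theorem, so what remains is only the formal bookkeeping that the relevant forgetful functors commute.
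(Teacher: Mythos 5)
Your proposal is correct and matches the paper's own argument, which likewise deduces the result from Theorem \ref{C(B,-) fr RLTop to RLTop} together with the observation that the right-hand square of forgetful functors commutes. The only cosmetic difference is that the paper also cites Corollary \ref{U C(B,-) adj to (B times -) D}, which your composite-of-functors argument renders unnecessary.
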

\begin{proof}
Follows from Corollary \ref{U C(B,-) adj to (B times -) D}, Theorem \ref{C(B,-) fr RLTop to RLTop}, and the fact that the right square in the above diagram commutes.
\end{proof}
\begin{theorem}\label{Gamma and pi B lift}
Let $\mathscr{B}$ be a topological space. The functor
\begin{itemize}
\item [$(1)$ \namedlabel{Gamma and pi B lift1}{$(1)$}] $\Gamma(\mathscr{B},-):\textbf{Bundle}(\mathscr{B}) \to \textbf{Top}$ lifts to a functor $\Gamma(\mathscr{B},-):\textbf{RL-Bundle}(\mathscr{B}) \to \textbf{RL-Top}$ in the sense that the following square commutes:
\begin{center}
$\xymatrix{\textbf{RL-Bundle}(\mathscr{B}) \ar[d]_{U} \ar[rr]^{\Gamma(\mathscr{B},-)} && \textbf{RL-Top} \ar[d]^{U} \\ \textbf{Bundle}(\mathscr{B}) \ar[rr]_{\Gamma(\mathscr{B},-)} && \textbf{Top}
}$
\end{center}
\item [$(2)$ \namedlabel{Gamma and pi B lift2}{$(2)$}] $\pi_\mathscr{B}: \textbf{Top} \to \textbf{Bundle}(\mathscr{B})$ lifts to a functor $\pi_{\mathscr{B}}: \textbf{RL-Top} \to \textbf{RL-Bundle}(\mathscr{B})$ in the sense that the following square commutes:
\begin{center}
$\xymatrix{\textbf{RL-Top}(\mathscr{B}) \ar[d]_{U} \ar[rr]^{\pi_{\mathscr{B}}} && \textbf{RL-Bundle}(\mathscr{B}) \ar[d]^{U} \\ \textbf{Top} \ar[rr]_{\pi_{\mathscr{B}}} && \textbf{Bundle}(\mathscr{B})}$
\end{center}

\end{itemize}
\end{theorem}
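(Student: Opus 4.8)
The plan is to establish the two liftings separately. In each case I would first produce the residuated-lattice structure on the relevant object, then verify continuity of the operations and of the induced morphisms, and finally observe that the displayed squares commute automatically because each lifted functor leaves the underlying spaces and maps unchanged (so that applying $U$ after the lift equals applying the unlifted functor after $U$).

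For \ref{Gamma and pi B lift1}, fix a bundle of residuated lattices $\mathscr{T}\overset{\pi}{\downarrow}\mathscr{B}$. The algebraic structure on $\Gamma(\mathscr{B},\mathscr{T})$ is supplied by Proposition \ref{bunetalresipro} (the implication \ref{bunetalresipro1}$\Rightarrow$\ref{bunetalresipro2} with $X=\mathscr{B}$): it is a residuated lattice under the pointwise operations $(\sigma\diamond\tau)(b)=\sigma(b)\overset{\pi}{\diamond}_{b}\tau(b)$. The substantive point is that each induced operation $\diamond:\Gamma(\mathscr{B},\mathscr{T})\times\Gamma(\mathscr{B},\mathscr{T})\to\Gamma(\mathscr{B},\mathscr{T})$ is continuous for the compact-open (subspace) topology. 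I would prove this by recalling from the proof of Proposition \ref{bunetalresipro} that $\sigma\diamond\tau=\overset{\pi}{\diamond}\circ\dblp{\sigma,\tau}$ and factoring $\diamond$ as
\[
\Gamma(\mathscr{B},\mathscr{T})\times\Gamma(\mathscr{B},\mathscr{T}) \xrightarrow{\theta} \mathcal{C}(\mathscr{B},\kappa_{\pi}) \xrightarrow{\mathcal{C}(\mathscr{B},\overset{\pi}{\diamond})} \mathcal{C}(\mathscr{B},\mathscr{T}),
\]
where $\theta(\sigma,\tau)=\dblp{\sigma,\tau}$ is the section $b\mapsto(\sigma(b),\tau(b))$ of $\kappa_{\pi}$. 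The map $\theta$ is continuous by the same computation $\theta^{\leftarrow}(S(C,U\times V))=S(C,U)\times S(C,V)$ employed in Theorem \ref{C(B,-) fr RLTop to RLTop}, using that $\mathcal{C}(\mathscr{B},-)$ sends the embedding $\kappa_{\pi}\hookrightarrow\mathscr{T}\times\mathscr{T}$ to an embedding, so that the corestriction to $\mathcal{C}(\mathscr{B},\kappa_{\pi})$ is legitimate; and $\mathcal{C}(\mathscr{B},\overset{\pi}{\diamond})$ is continuous by Proposition \ref{C(B,-) adj to B times -} since $\overset{\pi}{\diamond}$ is continuous. Because the composite takes values in $\Gamma(\mathscr{B},\mathscr{T})$ (as $\overset{\pi}{\diamond}$ is $\pi$-proper) and $\Gamma(\mathscr{B},\mathscr{T})$ carries the subspace topology, its corestriction — namely $\diamond$ — is continuous; the nullary operations $\overset{\pi}{0},\overset{\pi}{1}$ are constants and require nothing. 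Hence $\Gamma(\mathscr{B},\mathscr{T})$ is a topological residuated lattice. On morphisms, a morphism $h$ of bundles of residuated lattices goes to $\Gamma(\mathscr{B},h)=h\circ(-)$, which is continuous as the restriction of $\mathcal{C}(\mathscr{B},h)$ and is a residuated-lattice morphism because each $h|_{b}$ is one by Lemma \ref{etalresilattmorp}; thus it is an arrow of $\textbf{RL-Top}$.

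For \ref{Gamma and pi B lift2}, given a topological residuated lattice $\mathfrak{X}$ with underlying space $\mathscr{X}$, I would equip the bundle $\mathscr{B}\times\mathscr{X}\overset{\pi_{1}}{\downarrow}\mathscr{B}$ with a bundle-of-residuated-lattices structure. Under the identification $\kappa_{\pi_{1}}\cong\mathscr{B}\times\mathscr{X}\times\mathscr{X}$, set $\overset{\pi_{1}}{\diamond}(b,x_{1},x_{2})=(b,x_{1}\diamond x_{2})$ and take global sections $\overset{\pi_{1}}{0}(b)=(b,0)$, $\overset{\pi_{1}}{1}(b)=(b,1)$. Each stalk is a copy of $\mathfrak{X}$, so $\kappa_{\pi_{1}}$ is a stalk residuated lattice, and each operation is continuous because it equals $1_{\mathscr{B}}\times(\text{operation of }\mathfrak{X})$ and $\mathfrak{X}$ is a topological residuated lattice. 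A continuous residuated-lattice morphism $f:\mathfrak{X}\to\mathfrak{Y}$ is sent to $1_{\mathscr{B}}\times f$, which restricts on each stalk to the residuated-lattice morphism $f$ and hence is a morphism of bundles of residuated lattices by Lemma \ref{etalresilattmorp}; functoriality is inherited from $\pi_{\mathscr{B}}$ on $\textbf{Top}$.

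I expect the only genuine obstacle to be the continuity of the pointwise operations in \ref{Gamma and pi B lift1}: a naive compact-open argument would try to shrink a compact cover of a compact $C$ into compact pieces subordinate to an open cover, which is unavailable for an arbitrary base $\mathscr{B}$. The factorization through $\theta$ and $\mathcal{C}(\mathscr{B},\overset{\pi}{\diamond})$ circumvents this entirely and explains why the theorem needs no local-compactness or Hausdorff hypothesis on $\mathscr{B}$; everything else reduces to Proposition \ref{bunetalresipro}, Lemma \ref{etalresilattmorp}, and the bookkeeping that the underlying spaces and maps are unchanged, which is exactly what makes the two squares commute.
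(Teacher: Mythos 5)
Your proposal is correct and follows essentially the same route as the paper's proof: residuated-lattice structure on $\Gamma(\mathscr{B},\mathscr{T})$ from Proposition \ref{bunetalresipro}, continuity of each binary operation via the factorization through $\dblp{-,-}:\Gamma(\mathscr{B},\mathscr{T})\times\Gamma(\mathscr{B},\mathscr{T})\to \mathcal{C}(\mathscr{B},\kappa_{\pi})$ followed by $\mathcal{C}(\mathscr{B},\overset{\pi}{\diamond})$ (exactly the paper's commuting square, with your subbasis computation making explicit the continuity of $\dblp{-,-}$ that the paper leaves implicit), and for part \ref{Gamma and pi B lift2} the identical construction $\kappa_{\pi_1}\cong\mathscr{B}\times\mathfrak{X}\times\mathfrak{X}$ with $\overset{\pi_1}{\diamond}=1_{\mathscr{B}}\times\diamond$ and $f\mapsto 1_{\mathscr{B}}\times f$. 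The only (harmless) deviation is that you verify preservation of operations by $\Gamma(\mathscr{B},h)=h\circ(-)$ stalkwise via Lemma \ref{etalresilattmorp}, where the paper instead chases a larger commuting diagram; the two arguments are interchangeable.
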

\begin{proof}
\item [\ref{Gamma and pi B lift1}:] Given a bundle of residuated lattices $\mathscr{T}\overset{\phi}{\downarrow}\mathscr{B}$, by Proposition \ref{bunetalresipro}, $\Gamma(\mathscr{B},\mathscr{T})$ is a residuated lattice and by  Theorem \ref{Gamma(B,-) adj to pi_1}, it is a topological space. We need to show that the residuated lattice operations are continuous. For $\dot\diamond\in\{\dot\vee, \dot\wedge, \dot\odot, \dot\rightarrow\}$, by the definition of $\diamond : \Gamma(B,T)\times\Gamma(B,T)\to \Gamma(B,T)$ as given in the proof of Proposition \ref{bunetalresipro}, one can verify the commutativity of the following square:
\[
\xymatrix{
\Gamma(\mathscr{B},\mathscr{T})\times\Gamma(\mathscr{B},\mathscr{T}) \ar@{->}[rr]^{\diamond} \ar@{>->}[d]_{\dblp{-,-}} &  & \Gamma(\mathscr{B},\mathscr{T}) \ar@{_{(}->}[d]^{i} \\
\mathscr{C}(\mathscr{B},\kappa_{\pi}) \ar@{->}[rr]_{\mathscr{C}(\mathscr{B},\dot\diamond)} &  & \mathscr{C}(\mathscr{B},\kappa_{\pi})
}
\]
in which $\dblp{-,-}$ is injection and $i$ is the inclusion. Since $\dblp{-,-}$, $\mathscr{C}(\mathscr{B},\dot\diamond)$ and $i$ are continuous, and $i$ is the inclusion, the continuity of $\diamond$ follows.
For $\dot\diamond\in\{\dot 0, \dot 1\}$, the corresponding map $\diamond=\dot\diamond$ and is therefore continuous. Hence, $\Gamma(\mathscr{B},\mathscr{T})$ is a topological residuated lattice.

Next we show that a bundle residuated lattice morphism goes to a topological residuated lattice morphism. So let $h:\mathscr{S} \overset{\phi}{\downarrow} \mathscr{B} \to \mathscr{T} \overset{\psi}{\downarrow} \mathscr{B}$ be a morphism in $\textbf{RL-Bundle}(\mathscr{B})$. We have the morphism $\kappa_h=\dblp{hp_1,hp_2}$ rendering commutative the triangles in the following diagram:
\[
\xymatrix{\kappa_{\phi} \ar@/_2pc/[rdd]_{hp_1} \ar[rd]|{\kappa_h} \ar@/^2pc/[rrd]^{hp_2} && \\
 & \kappa_{\psi} \ar[d]_{q_1} \ar[r]^{q_2} & \mathscr{T} \ar[d]^{\psi} \\
 & \mathscr{T} \ar[r]_{\psi} & \mathscr{B}}
 \]

With $\dot\diamond\in\{\dot\vee, \dot\wedge, \dot\odot, \dot\rightarrow\}$, the commutativity of the following square can be verified easily:
\begin{equation}\label{K-morphism}
\xymatrix{\kappa_{\phi} \ar[d]_{\dot\diamond} \ar[rr]^{\kappa_h} && \kappa_{\psi} \ar[d]^{\dot\diamond} \\ \mathscr{S} \ar[rr]_{h} && \mathscr{T}}
\end{equation}

Now consider the following diagram:
\[
\xymatrix{\Gamma(\mathscr{B},\mathscr{S})\times\Gamma(\mathscr{B},\mathscr{S}) \ar[d]_{\dblp{-,-}} \ar@/_4pc/[dd]_{\diamond} \ar[rrr]^{\mathscr{C}(\mathscr{B},h)\times \mathscr{C}(\mathscr{B},h)} &&& \Gamma(\mathscr{B},T)\times\Gamma(\mathscr{B},\mathscr{T}) \ar[d]^{\dblp{-,-}}
\ar@/^4pc/[dd]^{\diamond} \\
\mathscr{C}(\mathscr{B},\kappa_{\phi}) \ar[d]_{\mathscr{C}(\mathscr{B},\dot\diamond)} \ar[rrr]|{\mathscr{C}(\mathscr{B},\kappa_{h})} &&& \mathscr{C}(\mathscr{B},\kappa_{\psi}) \ar[d]^{\mathscr{C}(\mathscr{B},\dot\diamond)} \\
\Gamma(\mathscr{B},S) \ar[rrr]_{\mathscr{C}(\mathscr{B},h)} &&& \Gamma(\mathscr{B},T)}
\]

The left and right triangles commute by definition of $\diamond$. The bottom square commutes by using the commutativity of \eqref{K-morphism}. The commutativity of the top square follows from the definitions of $\dblp{-,-}$ and $\kappa_h$. Hence, the big square in the above diagram commutes, implying the preservation of $\diamond$ by $\Gamma(\mathscr{B},-)$.
For $\dot\diamond\in\{\dot 0, \dot 1\}$, since for all $b\in B$, $h_b\circ\diamond_b=\diamond_b$, we get $(h\circ -)(\diamond)=h\circ \diamond=\diamond$. Hence $\Gamma(\mathscr{B},h)=h\circ -$
is in $\textbf{RL-Top}$.

Preservation of identities and composition follows from the fact that $\Gamma(B,-):\textbf{Bundle}(\mathscr{B}) \to \textbf{Top}$ is a functor. Finally, commutativity of the square involving the forgetful functors is seen easily.
\item [\ref{Gamma and pi B lift2}:] Given a topological resituated lattice $\mathfrak{A}$, we need to show that $\mathscr{B}\times \mathfrak{A} \overset{\pi_1}{\downarrow} \mathscr{B}$ is a bundle of residuated lattices. For each $b\in B$, we define
$\diamond_b : (\{b\}\times A)\times (\{b\}\times A)\to \{b\}\times A$ by $(b,x)\diamond_b (b,y)=(b,x\diamond_\mathfrak{A} y)$, where $\diamond_{\mathfrak{A}}\in\{\vee_{\mathfrak{A}}, \wedge_{\mathfrak{A}}, \odot_{\mathfrak{A}}, \rightarrow_{\mathfrak{A}}\}$,
and we define $0_b=(b,0_{\mathfrak{A}})$ and $1_b=(b,1_{\mathfrak{A}})$. The kernel pair $\kappa_{\pi_1}$ is $\mathscr{B}\times \mathfrak{A}\times \mathfrak{A}$, and the corresponding map $\dot\diamond:\mathscr{B}\times \mathfrak{A}\times \mathfrak{A} \to \mathscr{B}\times \mathfrak{A}$ is $1_B\times \diamond_\mathfrak{A}$, which is continuous, because both $1_B$ and $\diamond_{\mathfrak{A}}$ are. Thus $\mathscr{B}\times \mathfrak{A} \overset{\pi_1}{\downarrow} \mathscr{B}$ is in $\textbf{RL-Bundle}(\mathscr{B})$.

Also a continuous map $f:X\to Y$ goes to $1_B\times f : B\times X \overset{\pi_1}{\downarrow} B \to B\times Y \overset{\pi_1}{\downarrow} B$, which can be easily verified to be a bundle residuated lattice morphism. What remains to show can be easily verified too.
\end{proof}

The next result shows that the adjoint functors of Corollary \ref{bundugset} both lift to residuated lattices.
 \begin{theorem}
Let $\mathscr{B}$ be a topological residuated lattice. The functors $U\circ \Gamma(\mathscr{B},-):\textbf{Bundle}(\mathscr{B}) \to \textbf{Set}$ and $\pi_{\mathscr{B}}\circ D : \textbf{Set} \to \textbf{Bundle}(\mathscr{B})$ both lift to residuated lattices, i.e, the following squares commute:
\begin{center}
$\xymatrix{\textbf{RL-Bundle}(\mathscr{B}) \ar[d]_{U} \ar@<0.5ex>[rr]^{\Gamma(\mathscr{B},-)} && \textbf{RL-Top} \ar@<0.5ex>[ll]^{\pi_\mathscr{B}} \ar[d]|{U} \ar[r]^{U} & \textbf{RL} \ar@<0.9ex>[l]^D \ar[d]^{U} \\ \textbf{Bundle}(\mathscr{B}) \ar@<0.5ex>[rr]^{\Gamma(\mathscr{B},-)} && \textbf{Top} \ar@<0.5ex>[ll]^{\pi_{\mathscr{B}}} \ar[r]^{U} & \textbf{Set} \ar@<0.9ex>[l]^D
}$
\end{center}
\end{theorem}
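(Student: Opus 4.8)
The plan is to realize both asserted lifts as horizontal pastings of commuting squares, where the left-hand square is supplied directly by Theorem~\ref{Gamma and pi B lift} and the right-hand square merely records the compatibility of the topology-toggling functors $U \dashv D$ with the residuated-lattice-forgetting functors $U$. As in Corollary~\ref{bundugset}, no local-compactness or Hausdorff hypothesis on the underlying space of $\mathscr{B}$ is needed here, since $\Gamma(\mathscr{B},-)$ and $\pi_{\mathscr{B}}$ are honest functors for an arbitrary base; the only real content is their interaction with the residuated-lattice structure, and that has already been secured by Theorem~\ref{Gamma and pi B lift}.

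First I would treat $U\circ\Gamma(\mathscr{B},-)$. By Theorem~\ref{Gamma and pi B lift}\ref{Gamma and pi B lift1} the left square commutes, so $\Gamma(\mathscr{B},-)\colon\textbf{RL-Bundle}(\mathscr{B})\to\textbf{RL-Top}$ lies over $\Gamma(\mathscr{B},-)\colon\textbf{Bundle}(\mathscr{B})\to\textbf{Top}$ along the operation-forgetting verticals. It then remains to observe that the right square
\begin{center}
$\xymatrix{\textbf{RL-Top}\ar[d]_{U}\ar[r]^{U} & \textbf{RL}\ar[d]^{U} \\ \textbf{Top}\ar[r]_{U} & \textbf{Set}}$
\end{center}
commutes; this is exactly the compatibility of forgetful functors already invoked in the proof of the (unnamed) theorem immediately preceding Theorem~\ref{Gamma and pi B lift}, namely that discarding the topology and then the operations returns the same bare underlying set as discarding the operations and then the topology. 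Pasting the two squares along their shared vertical $U\colon\textbf{RL-Top}\to\textbf{Top}$ yields commutativity of the outer rectangle, which is precisely the statement that $U\circ\Gamma(\mathscr{B},-)$ lifts to a functor $\textbf{RL-Bundle}(\mathscr{B})\to\textbf{RL}$.

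Next I would treat $\pi_{\mathscr{B}}\circ D$ dually. By Theorem~\ref{Gamma and pi B lift}\ref{Gamma and pi B lift2} the corresponding left square commutes for $\pi_{\mathscr{B}}$, and the right square
\begin{center}
$\xymatrix{\textbf{RL}\ar[d]_{U}\ar[r]^{D} & \textbf{RL-Top}\ar[d]^{U} \\ \textbf{Set}\ar[r]_{D} & \textbf{Top}}$
\end{center}
commutes because equipping the underlying set of a residuated lattice with the discrete topology and then forgetting the operations produces the same discrete space as forgetting the operations first and then taking the discrete topology; here one also uses that every operation is automatically continuous for the discrete topology, so that $D$ genuinely lands in $\textbf{RL-Top}$. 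Pasting this with the left square along the shared vertical $U\colon\textbf{RL-Top}\to\textbf{Top}$ gives the lift of $\pi_{\mathscr{B}}\circ D$ to $\textbf{RL}\to\textbf{RL-Bundle}(\mathscr{B})$.

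The entire argument is formal diagram-pasting, so I do not expect a genuine obstacle; the only steps deserving a line of justification are the commutativity of the two right-hand squares, and these reduce to the trivial fact that the forgetful and discrete functors between $\textbf{RL}$ and $\textbf{Set}$, and between $\textbf{RL-Top}$ and $\textbf{Top}$, act identically on underlying data. Preservation of identities and of composition for the lifted functors is inherited from that of the functors already established in Theorem~\ref{Gamma and pi B lift} and Corollary~\ref{bundugset}.
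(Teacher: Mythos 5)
Your proposal is correct and follows essentially the same route as the paper, whose proof is exactly the pasting you describe: the left squares come from Theorem~\ref{Gamma and pi B lift}, the right squares commute with respect to both $U$ and $D$, and Corollary~\ref{bundugset} supplies the underlying functors without any local-compactness hypothesis. Your explicit justification that $D$ lands in $\textbf{RL-Top}$ (operations are automatically continuous for the discrete topology) is a detail the paper leaves implicit, but it is the same argument.
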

\begin{proof}
Follows from Corollary \ref{bundugset}, Theorem \ref{Gamma and pi B lift}, and the fact that the right squares in the above diagram commute with respect to both $U$ and $D$.
\end{proof}

Building upon Corollary \ref{etafulsubbun}, we have established that, for a given topological space $\mathscr{B}$, $\textbf{Etale}(\mathscr{B})$ forms a full subcategory of $\textbf{Bundle}(\mathscr{B})$. Now, we aim to improve this result by stating that $\textbf{Etale}(\mathscr{B})$ is coreflective in $\textbf{Bundle}(\mathscr{B})$. While \citet[\S 2.6, Corollary 3]{maclane2012sheaves} has stated this result using presheaves, we aim to establish it independently of them.
\begin{proposition}\label{etafulsubcatbund}
  Given a topological space $\mathscr{B}$, $\textbf{Etale}(\mathscr{B})$ is coreflective in $\textbf{Bundle}(\mathscr{B})$.
\end{proposition}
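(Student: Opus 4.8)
The plan is to exhibit a right adjoint (a \emph{coreflector}) $R:\textbf{Bundle}(\mathscr{B})\to\textbf{Etale}(\mathscr{B})$ to the inclusion functor of Corollary \ref{etafulsubbun}, by carrying out the classical germ construction directly, i.e. without invoking (pre)sheaves. Given a bundle $\mathscr{T}\overset{\pi}{\downarrow}\mathscr{B}$, I would form the set $\mathscr{T}^{*}$ of all germs at points of $\mathscr{B}$ of continuous local sections of $\pi$: a point of $\mathscr{T}^{*}$ lying over $b$ is an equivalence class of pairs $(U,\sigma)$, where $U$ is an open neighbourhood of $b$ and $\sigma\in\Gamma(U,\mathscr{T})$, two such being identified when the sections agree on some neighbourhood of $b$. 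Writing $\sigma_{b}$ for the germ of $\sigma$ at $b$, let $\pi^{*}:\mathscr{T}^{*}\to\mathscr{B}$ send $\sigma_{b}$ to $b$, and topologise $\mathscr{T}^{*}$ by declaring the sets $[\sigma]_{U}=\{\sigma_{b}\mid b\in U\}$ (for $U$ open in $\mathscr{B}$ and $\sigma\in\Gamma(U,\mathscr{T})$) to be a basis. Since $\pi^{*}$ carries each $[\sigma]_{U}$ bijectively, and indeed homeomorphically, onto $U$, it is a local homeomorphism, so $\mathscr{T}^{*}\overset{\pi^{*}}{\downarrow}\mathscr{B}$ is an \'{e}tal\'{e}; I set $R(\mathscr{T})=\mathscr{T}^{*}$.

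Next I would produce the counit. Define $\epsilon_{\mathscr{T}}:\mathscr{T}^{*}\to\mathscr{T}$ by $\epsilon_{\mathscr{T}}(\sigma_{b})=\sigma(b)$. This is well defined because germ-equivalent sections share their value at $b$, it satisfies $\pi\epsilon_{\mathscr{T}}=\pi^{*}$, and it is continuous since its restriction to each basic set $[\sigma]_{U}$ equals $\sigma\circ(\pi^{*}|_{[\sigma]_{U}})$, a composite of continuous maps. Thus $\epsilon_{\mathscr{T}}$ is a morphism of bundles over $\mathscr{B}$.

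For the universal property, let $\mathscr{S}\overset{\phi}{\downarrow}\mathscr{B}$ be an \'{e}tal\'{e} and $f:\mathscr{S}\to\mathscr{T}$ a bundle morphism, so that $f$ is continuous and $\pi f=\phi$. Given $s\in\mathscr{S}$, Lemma \ref{shepropo} furnishes a local section $\rho$ of $\phi$ with $\rho(\phi(s))=s$; then $f\rho$ is a local section of $\pi$ (as $\pi f\rho=\phi\rho=1$), and I set $\bar{f}(s)=(f\rho)_{\phi(s)}$. Independence of the choice of $\rho$ follows from Proposition \ref{sheprop}\ref{sheprop1}: any two local sections of $\phi$ through $s$ agree at $\phi(s)$, hence on an open neighbourhood of $\phi(s)$, so the resulting germs of $f\rho$ coincide. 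One checks $\epsilon_{\mathscr{T}}\bar{f}=f$ directly. Moreover $\bar f$ is open, since using that the sets $\rho(V)$ form a basis of $\mathscr{S}$ (Proposition \ref{1sheafbasepro}) one verifies $\bar f(\rho(V))=[f\rho]_{V}$, so $\bar f$ carries basic opens, and hence all opens, to opens; continuity then follows from Proposition \ref{conopnloc}, making $\bar f$ an \'{e}tal\'{e} morphism.

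Finally, for uniqueness, suppose $g:\mathscr{S}\to\mathscr{T}^{*}$ is any \'{e}tal\'{e} morphism with $\epsilon_{\mathscr{T}}g=f$. Continuity of $g$ forces $g(s')=\tau_{\phi(s')}$ for a fixed section $\tau$ on a neighbourhood of $s$, whence $f(s')=\tau(\phi(s'))$ there; comparing with $\bar f$ and using $\phi\rho=1$ shows that $f\rho$ and $\tau$ agree near $\phi(s)$, so $g(s)=\bar f(s)$. This establishes the bijection $\textbf{Bundle}(\mathscr{B})(\mathscr{S},\mathscr{T})\cong\textbf{Etale}(\mathscr{B})(\mathscr{S},R(\mathscr{T}))$, natural in both variables, which proves coreflectivity. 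I expect this last step to be the main obstacle, since it is where the three \'{e}tal\'{e} properties of $\mathscr{S}$ must be orchestrated together, namely the existence of local sections (Lemma \ref{shepropo}), the openness of equalizers (Proposition \ref{sheprop}\ref{sheprop1}), and sections forming a basis (Proposition \ref{1sheafbasepro}); by contrast, the germ construction and the verification that $\mathscr{T}^{*}$ is an \'{e}tal\'{e} are routine.
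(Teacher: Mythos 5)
Your proposal is correct and, at the level of the main construction, coincides with the paper's proof: both build the coreflector as the \'{e}tal\'{e} of germs of local sections of the bundle, take evaluation $\sigma_b\mapsto\sigma(b)$ as the counit, and obtain $\bar f$ via Lemma \ref{shepropo}, with well-definedness from Proposition \ref{sheprop}\ref{sheprop1} applied to sections of the \'{e}tal\'{e} \emph{domain}. Two differences deserve note. A minor one: you declare the sets $[\sigma]_U$ to be a basis outright (do record the routine check that if $\sigma_b\in[\sigma]_U\cap[\tau]_V$ then $\sigma$ and $\tau$ agree on some open $W\ni b$ inside $U\cap V$, so $[\sigma|_W]_W$ witnesses the basis axiom), whereas the paper first puts the final topology coinduced by the maps $s_U$ on $\tilde\Gamma(f)$ and then proves these same sets form a basis; the two topologies agree. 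More substantively: the paper settles couniversality by asserting that the counit $\varepsilon_f$ is injective and open, citing Proposition \ref{sheprop}\ref{sheprop1} and \ref{sheprop}\ref{sheprop3}, and then derives both continuity of $\bar h$ (via $\bar h^{\leftarrow}(O)=h^{\leftarrow}(\varepsilon_f(O))$) and its uniqueness from these facts. But those propositions concern sections of an \'{e}tal\'{e}, while here the codomain $X\overset{f}{\downarrow}\mathscr{B}$ is an arbitrary bundle, and $\varepsilon_f$ is in general \emph{not} injective: for the product bundle $\mathbb{R}\times\mathbb{R}\overset{\pi_1}{\downarrow}\mathbb{R}$, the germs at $0$ of $x\mapsto(x,0)$ and $x\mapsto(x,x)$ are distinct yet have the same value under $\varepsilon_f$. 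Your route sidesteps this entirely: continuity of the counit by restriction to basic opens, openness of $\bar f$ from the identity $\bar f(\rho(V))=[f\rho]_V$ together with Propositions \ref{1sheafbasepro} and \ref{conopnloc}, and uniqueness by the local-constancy argument (continuity forces $g$ into a basic set $[\tau]_W$ near any point, whence $g=\bar f$ there). So your handling of the couniversality step is not merely independent of the paper's but actually repairs it; the only items left to make explicit are routine, namely the basis axiom above and the observation that $\pi^{*}\bar f=\phi$, which is what licenses the appeal to Proposition \ref{conopnloc}.
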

\begin{proof}
To prove this, we show the inclusion $i_{\mathscr{B}}:\textbf{Etale}(\mathscr{B})\to \textbf{Bundle}(\mathscr{B})$ has a right adjoint $R_{\mathscr{B}}:\textbf{Bundle}(\mathscr{B}) \to \textbf{Etale}(\mathscr{B})$. So let a bundle $X\overset{f}{\downarrow}\mathscr{B}$ be given. For each $b\in \mathscr{B}$, set $\Gamma(f)_b=\bigcup\limits_{U\in N_b} \Gamma(U,X)$, where $N_{b}$ is the set of neighbourhoods of $b$.
Define a relation $\sim_b$ on the set $\Gamma(f)_b$ as follows; for sections $s:U\to \mathscr{T}$ and $t:V\to \mathscr{t}$ in $\Gamma(f)_b$, $s\sim_{b} t$ if and only if there exists $W\in N_b$ contained in $U\cap V$, such that the following diagram commutes:
\begin{center}
$\xymatrix{ & U \ar[rd]^s & \\ W \ar@{^(->}[ru]^{i_W^U} \ar@{^(->}[rd]_{i_W^V} & & X \\ & V \ar[ru]_t
}$
\end{center}

 It is easy to verify that $\sim_b$ is an equivalence relation. For a given neighbourhood $U$ of $b$, the equivalence class of a section $s\in \Gamma(U,\mathscr{T})$ shall be denoted by $[s]_{U,b}$, or simply by $[s]_{b}$ if we are able to forget about the neighbourhood on which $s$ is defined. Set $\tilde{\Gamma}(f)_b=\Gamma(f)_b/\sim_b$ and $\tilde{\Gamma}(f)=\coprod\limits_{b\in B} \tilde{\Gamma}(f)_b$. For any open set $U$ of $\mathscr{B}$ and $s\in \Gamma(U,X)$, let the map $s_{U}:U\longrightarrow \tilde{\Gamma}(f)$ be given by $x\mapsto [s]_{x}$. We topologize $\tilde{\Gamma}(f)$ by putting on it the final topology coinduced by the set $\{s_U:U \hbox{ open in } \mathscr{B}, s\in \Gamma(U,X)\}$.

Let $U,V$ be open sets in $\mathscr{B}$, $s\in \Gamma(U,X)$ and $t\in \Gamma(V,X)$. With a little bit of effort one infers that $b\in t_{V}^{\leftarrow}(Im(s_{U}))$ if and only if there exists a neighborhood $W_{b}$ of $b$ contained in $U\cap V$ such that $s|_{W_{b}}=t|_{W_{b}}$. One can see that $t_{V}^{\leftarrow}(Im(s_{U}))=\bigcup_{b\in t_{V}^{\leftarrow}(Im(s_{U}))}W_{b}$. Now, let $O$ be an open set in the final topology. Consider $[s]_{U,x}\in O$. It is evident that $[s]_{U,x}=[s|_{s_{U}^\leftarrow(O)}]_{s_{U}^\leftarrow(O),x}\in Im((s|_{s_{U}^\leftarrow(O)})_{s_{U}^\leftarrow(O)})\subseteq O$. This verifies that the set $\{Im(s_{U})\mid U \hbox{ open in } \mathscr{B}, s\in \Gamma(U,X)\}$ forms a basis for the final topology on $\tilde{\Gamma}(f)$. Define $\tilde{f}:\tilde{\Gamma}(f)\longrightarrow \mathscr{B}$ to take $[s]_{b}$ to $b$. Continuity of $\tilde{f}$ follows by $\tilde{f}^{\leftarrow}(U)=\bigcup\{Im(s_{V})\mid V \hbox{ open subset of } U,~s\in \Gamma(V,X)\}$. Let $U$ be an open set in $\mathscr{B}$ and $s\in \Gamma(U,X)$. The openness of $\tilde{f}$ follows by $\tilde{f}(Im(s_{U}))=U$. Now, consider $[s]_{U,x}\in \tilde{\Gamma}(f)$. It is easy to see that $\tilde{f}|_{Im(s_{U})}$ is an injection and so $\tilde{f}$ is locally injective. Hence, by Theorem \ref{lhcol}, $\tilde{f}$ is a local homeomorphism, and so $\tilde{\Gamma}(f)\overset{\tilde{f}}{\downarrow}\mathscr{B}$ is an \'{e}tal\'{e}.

Now, let $h:X\overset{f}{\downarrow}\mathscr{B} \to Y\overset{g}{\downarrow}\mathscr{B}$ be a bundle morphism. Define the map
$\tilde{h}:\tilde{\Gamma}(f)\overset{\tilde{f}}{\downarrow}\mathscr{B} \to \tilde{\Gamma}(g)\overset{\tilde{g}}{\downarrow}\mathscr{B}$ by $\tilde{h}([s]_b)=[hs]_b$. Routine computations show that $\tilde{h}$ is well-defined and $\tilde{g}\tilde{h}=\tilde{f}$. Also, for a section $s:U\to X$,  the equality $\tilde{h}(Im(s_{U}))=Im((hs)_{U})$ yields openness of
$\tilde{h}$. Hence  by Proposition \ref{conopnloc}, $\tilde{h}$ is an \'{e}tal\'{e} morphism.

Now, define the mapping $R_\mathscr{B}$ to take the object $X \overset{f}{\downarrow} \mathscr{B}$ to $\tilde{\Gamma}(f) \overset{\tilde{f}}{\downarrow} \mathscr{B}$, and the morphism $h:X\overset{f}{\downarrow}\mathscr{B} \to Y\overset{g}{\downarrow}\mathscr{B}$ to $\tilde{h}:\tilde{\Gamma}(f)\overset{\tilde{f}}{\downarrow}\mathscr{B} \to \tilde{\Gamma}(g)\overset{\tilde{g}}{\downarrow}\mathscr{B}$. Preservation of identities and composition of $R_\mathscr{B}$ follows readily, and so $R_\mathscr{B}:\textbf{Bundle}(\mathscr{B})\to \textbf{Etale}(\mathscr{B})$ is a functor.

To show that $R_{\mathscr{B}}$ is a right adjoint of $i_{\mathscr{B}}$, we show that for each bundle $X \overset{f}{\downarrow} \mathscr{B}$, there is a couniversal morphism
$\varepsilon_f : \tilde{\Gamma}(f) \overset{\tilde{f}}{\downarrow} \mathscr{B} \to X \overset{f}{\downarrow} \mathscr{B}$. Define $\varepsilon_f$ by $\varepsilon_f([s]_b)=s(b)$. Routine computations show that $\varepsilon_f$ is well-defined and $f\varepsilon_f=\tilde{f}$. Injectivity can be verified by Proposition \ref{sheprop}\ref{sheprop1}, and openness follows by Proposition \ref{sheprop}\ref{sheprop3} and the fact that $\varepsilon_{f}(Im(s_{U}))=s(U)$. Continuity of $\epsilon_f$ follows by the fact that
$\epsilon_f^{-1}(O)=\bigcup s_U(s^{-1}(O))$, where the union runs over the set $\{U \hbox{ open subset of } \mathscr{B}, s\in \Gamma(U,X)\}$.

For couniversality, suppose a bundle morphism $h:\mathscr{T}\overset{\pi}{\downarrow} \mathscr{B} \to X \overset{f}{\downarrow} \mathscr{B}$ is given, with $\pi$ an \'{e}tal\'{e}. For a given $y\in \mathscr{T}$, by Lemma \ref{shepropo}, there exists a neighborhood $U$ of $\pi(y)$ and a section $s:U\to \mathscr{T}$ such that $s\pi(y)=y$. Define $\bar h:\mathscr{T}\overset{\pi}{\downarrow} \mathscr{B} \to \tilde{\Gamma}(f) \overset{\tilde f}{\downarrow} B$ by $\bar h(y)=[hs]_{\pi(y)}$. Some computations show that $\tilde{f}\bar{h}=\pi$ and that the following triangle commutes.
\begin{center}
$\xymatrix{\tilde{\Gamma}(f) \overset{\tilde{f}}{\downarrow} B \ar[rrr]^{\varepsilon_f} &&& X \overset{f}{\downarrow} B \\
\mathscr{T}\overset{\pi}{\downarrow} \mathscr{B} \ar[u]^{\bar h} \ar[rrru]_h
}$
\end{center}

Using injectivity of $\epsilon_f$, for any subset $O$ of $\mathscr{T}$, we have $\bar{h}^{-1}(O)=\bar{h}^{-1}\epsilon^{-1}_f\epsilon_f(O)=h^{-1}\epsilon_f(O)$. Thus continuity of
$\bar{h}$ follows by openness of $\varepsilon_f$ and continuity of $h$. Uniqueness of $\bar h$ follows from the fact that $\varepsilon_f$ is injective.
\end{proof}

So far we have shown that, given a topological space $\mathscr{B}$, $\textbf{RL-Etale}(\mathscr{B})$ is a full subcategory of $\textbf{RL-Bundle}(\mathscr{B})$ (Corollary \ref{rletafulsubcatrlbund}). Now we want to sharpen this result and say that $\textbf{RL-Etale}(\mathscr{B})$ is coreflective in $\textbf{RL-Bundle}(\mathscr{B})$.
\begin{theorem}\label{rletacorefrlbund}
  Given a topological space $\mathscr{B}$, $\textbf{RL-Etale}(\mathscr{B})$ is coreflective in $\textbf{RL-Bundle}(\mathscr{B})$.
\end{theorem}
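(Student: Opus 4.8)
The plan is to lift the topological coreflection $R_{\mathscr{B}}:\textbf{Bundle}(\mathscr{B})\to\textbf{Etale}(\mathscr{B})$ built in Proposition \ref{etafulsubcatbund} to the residuated-lattice setting, and then to verify that the couniversal morphism $\varepsilon_f$ produced there is compatible with the operations. The organizing tool is the fiberwise criterion of Lemma \ref{etalresilattmorp}: a bundle (\'{e}tal\'{e}) morphism is a morphism of residuated lattices precisely when each of its restrictions to a stalk is a morphism of residuated lattices. This reduces every compatibility check below to a single computation in the stalk over a point $b\in\mathscr{B}$.

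First I would equip the \'{e}tal\'{e} $\tilde\Gamma(f)\overset{\tilde f}{\downarrow}\mathscr{B}$ with a stalk residuated lattice structure. Given an RL-bundle $X\overset{f}{\downarrow}\mathscr{B}$, Proposition \ref{bunetalresipro} guarantees that each $\Gamma(U,X)$, for $U$ open, is a residuated lattice under the pointwise operations. On the germ stalk $\tilde\Gamma(f)_b$ I would define $[s]_b\overset{\tilde f}{\diamond}_b[t]_b:=[s\diamond t]_b$, where $s\diamond t$ denotes the pointwise operation on $\mathrm{dom}(s)\cap\mathrm{dom}(t)$ and $\diamond$ ranges over $\vee,\wedge,\odot,\to$, together with the germs of the constant sections $\overset{f}{0},\overset{f}{1}$. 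Well-definedness follows since two germ-equivalent sections agree on a smaller neighbourhood, and the residuated lattice axioms hold because each is an equational identity valid on every $\Gamma(U,X)$; equivalently, $\tilde\Gamma(f)_b$ is the directed colimit of the residuated lattices $\Gamma(U,X)$ over $U\in N_b$, which is again a residuated lattice since $\textbf{RL}$ is a variety. This makes $\kappa_{\tilde f}$ a stalk residuated lattice.

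Next I would promote $\tilde\Gamma(f)\overset{\tilde f}{\downarrow}\mathscr{B}$ to an \emph{\'{e}tal\'{e}} of residuated lattices. As $\tilde f$ is a local homeomorphism, Proposition \ref{bunetalresipro} reduces this to checking that sections over open sets are closed under the pointwise operations. This is immediate from the basis $\{Im(s_U)\}$ for the topology of $\tilde\Gamma(f)$: a section over an open set is locally of the form $s_U$ for some $s\in\Gamma(U,X)$, and since the germ operations are pointwise, $\sigma\diamond\tau$ is locally $(s\diamond t)_{U\cap V}$, again a section. Thus $\tilde\Gamma(f)\overset{\tilde f}{\downarrow}\mathscr{B}$ lies in $\textbf{RL-Etale}(\mathscr{B})$. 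For a morphism $h:X\overset{f}{\downarrow}\mathscr{B}\to Y\overset{g}{\downarrow}\mathscr{B}$ of RL-bundles, the induced map $\tilde h([s]_b)=[hs]_b$ restricts on each stalk to $[s]_b\mapsto[hs]_b$; since $h|_b$ is a morphism of residuated lattices one has $h(s\diamond t)=(hs)\diamond(ht)$ pointwise, so $\tilde h|_b$ preserves the operations and, by Lemma \ref{etalresilattmorp}, $\tilde h$ is a morphism of RL-\'{e}tal\'{e}s. Hence $R_{\mathscr{B}}$ lifts to a functor $\textbf{RL-Bundle}(\mathscr{B})\to\textbf{RL-Etale}(\mathscr{B})$.

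Finally I would verify that the couniversal data of Proposition \ref{etafulsubcatbund} remains valid in the RL setting. The counit $\varepsilon_f([s]_b)=s(b)$ restricts over $b$ to evaluation $[s]_b\mapsto s(b)$, and pointwiseness of the operations on $\Gamma(U,X)$ gives $(s\diamond t)(b)=s(b)\overset{f}{\diamond}_b t(b)$, so $\varepsilon_f|_b$ is a morphism of residuated lattices and $\varepsilon_f$ is an RL-bundle morphism by Lemma \ref{etalresilattmorp}. For couniversality, given an RL-\'{e}tal\'{e} $\mathscr{T}\overset{\pi}{\downarrow}\mathscr{B}$ and an RL-bundle morphism $h:\mathscr{T}\to X$, Proposition \ref{etafulsubcatbund} already supplies the unique continuous $\bar h$ with $\varepsilon_f\bar h=h$ and $\tilde f\bar h=\pi$, so only its compatibility with the operations remains. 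Choosing, for $y_1,y_2\in\mathscr{T}_b$, local sections $s_i$ with $s_i(b)=y_i$, one computes $\bar h(y_1\overset{\pi}{\diamond}_b y_2)=[h(s_1\diamond s_2)]_b=[(hs_1)\diamond(hs_2)]_b=\bar h(y_1)\overset{\tilde f}{\diamond}_b\bar h(y_2)$, again using that $h|_b$ is a morphism of residuated lattices; thus $\bar h|_b$, and hence $\bar h$, is a morphism of RL-\'{e}tal\'{e}s. This exhibits $\varepsilon_f$ as a couniversal arrow, so the lifted $R_{\mathscr{B}}$ is right adjoint to the inclusion and $\textbf{RL-Etale}(\mathscr{B})$ is coreflective in $\textbf{RL-Bundle}(\mathscr{B})$. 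I expect the only genuinely delicate point to be the continuity of the induced operations — that $\tilde\Gamma(f)$ is an \emph{\'{e}tal\'{e}} rather than merely a bundle of residuated lattices — which is exactly where Proposition \ref{bunetalresipro} and the explicit basis $\{Im(s_U)\}$ do the work; every remaining step is a routine stalkwise application of Lemma \ref{etalresilattmorp}.
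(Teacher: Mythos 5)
Your proposal is correct and follows the same architecture as the paper's proof: lift the topological coreflection $R_{\mathscr{B}}$ of Proposition \ref{etafulsubcatbund}, put the germwise pointwise operations on the stalks $\tilde{\Gamma}(f)_b$ (your $[s]_b\,\tilde\diamond_b\,[t]_b=[s\diamond t]_b$ is exactly the paper's $[s_{|_{U\cap V}}]_b\ \diamond_{U\cap V}\ [t_{|_{U\cap V}}]_b$), check that $\tilde h$, $\varepsilon_f$ and $\bar h$ preserve the operations, and conclude couniversality from the topological case. The one step where you genuinely diverge is the point you yourself flag as delicate: continuity of the lifted operations on $\tilde{\Gamma}(f)$. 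The paper proves this directly, from the commutative square $\varepsilon_f\circ\dot{\tilde\diamond}=\dot\diamond\circ\langle\hspace{-1.5pt}\langle\varepsilon_f,\varepsilon_f\rangle\hspace{-1.5pt}\rangle$ together with the injectivity and openness of $\varepsilon_f$ (so that $\dot{\tilde\diamond}$ factors as a continuous map followed by the inverse of an open embedding), whereas you route it through the implication \ref{bunetalresipro3}$\Rightarrow$\ref{bunetalresipro1} of Proposition \ref{bunetalresipro}, verifying hypothesis \ref{bunetalresipro3} via the basis $\{Im(s_U)\}$ and the observation that every continuous section of $\tilde f$ is locally of the form $s_U$. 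Both arguments are sound; yours has the advantage of reusing machinery the paper already established for étalés (and your colimit remark --- that $\tilde{\Gamma}(f)_b$ is a directed colimit of the residuated lattices $\Gamma(U,X)$, hence a residuated lattice since $\textbf{RL}$ is a variety --- is a cleaner justification of the stalk axioms than the paper's bare assertion), while the paper's $\varepsilon_f$-based argument is shorter once the counit is in hand. One caveat on your route: as literally stated, the implication \ref{bunetalresipro3}$\Rightarrow$\ref{bunetalresipro1} assumes $\mathscr{T}\overset{\pi}{\downarrow}\mathscr{B}$ is already an \emph{étalé of residuated lattices}, which read strictly would make your appeal circular; however, inspection of that proof shows it only uses that $\pi$ is a local homeomorphism carrying a stalk residuated lattice structure satisfying \ref{bunetalresipro3}, which is exactly your situation, so your reading matches the evident intent. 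Similarly, your direct stalkwise verification that $\bar h$ preserves operations (choosing local sections through $y_1,y_2$ and using well-definedness of $\bar h$) is a legitimate replacement for the paper's slicker deduction from the triangle $\varepsilon_f\bar h=h$ and injectivity of $\varepsilon_f$; note only that it silently uses that $s_1\diamond s_2$ is again a \emph{continuous} section of the étalé $\mathscr{T}$, which is implication \ref{bunetalresipro1}$\Rightarrow$\ref{bunetalresipro3} applied to $\mathscr{T}$.
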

\begin{proof}
We show the inclusion $i_{\mathscr{B}}:\textbf{RL-Etale}(\mathscr{B})\to \textbf{RL-Bundle}(\mathscr{B})$ has a right adjoint $R_{\mathscr{B}}:\textbf{RL-Bundle}(\mathscr{B}) \to \textbf{RL-Etale}(\mathscr{B})$. Let a bundle of residuated lattices $X\overset{f}{\downarrow}\mathscr{B}$ be given. By Proposition \ref{etafulsubcatbund}, we have an \'{e}tal\'{e}
$\tilde{\Gamma}(f) \overset{\tilde{f}}{\downarrow} \mathscr{B}$. To show this is an \'{e}tal\'{e} of residuated lattices, by Proposition \ref{bunetalresipro}, for each $U\in \mathscr{O(B)}$, $\Gamma(U,X)$ is a residuated lattice under the pointwise operations. Now, for each $b\in \mathscr{B}$, $U\in N_b$, and each operation $\diamond_U\in\{\vee_U, \wedge_U, \odot_U, \rightarrow_U\}$ on $\Gamma(U,X)$, define the operation $\tilde\diamond_b$ on $\tilde{\Gamma}(f)_b$, by $[s]_b\ \tilde\diamond_b\ [t]_b=[s_{|_{U\cap V}}]_{b}\ \diamond_{U\cap V}\ [t_{|_{U\cap V}}]_{b}$, where $s:U\to X$ and $t:V\to X$ are sections over $X$. Also, for the nullary operations $0_U$ and $1_U$ on $\Gamma(U,X)$, define $\tilde 0_b$ and $\tilde 1_b$ in $\tilde{\Gamma}(f)_b$ by $\tilde 0_b=[0_U]_b$ and $\tilde 1_b=[1_U]_b$. The corresponding $\tilde f$-proper operations make $\kappa_{\tilde f}$ a stalk residuated lattice.

To prove the continuity of the binary operations, consider the following commutative square,
\begin{center}
$\xymatrix{\kappa_{\tilde f} \ar[d]_{\dot{\tilde\diamond}}\ar[rr]^{\dblp{\varepsilon_f,\varepsilon_f}} && \kappa_f \ar[d]^{\dot\diamond} \\ \tilde{\Gamma}(f) \ar[rr]_{\varepsilon_f} && X
}$
\end{center}
where $\varepsilon_{f}$ is given in the proof of Proposition \ref{etafulsubcatbund}. The continuity of $\dot{\tilde\diamond}$ now follows from the injectivity and openness of $\varepsilon_f$ and the continuity of $\dot{\diamond}$ and $\dblp{\varepsilon_f,\varepsilon_f}$. The continuity of the sections $\tilde 0$ and $\tilde 1$ is easily verified.
This proves that $\tilde\Gamma(f) \overset{\tilde f}{\downarrow} \mathscr{B}$ is an \'{e}tal\'{e} of residuated lattices.

By Proposition \ref{etafulsubcatbund}, we know a bundle morphism $h: X \overset{f}{\downarrow} B \to Y \overset{g}{\downarrow} B$ goes to the \'{e}tal\'{e} morphism
$\tilde{h}: \tilde{\Gamma}(f) \overset{\tilde{f}}{\downarrow} B \to \tilde{\Gamma}(g) \overset{\tilde{g}}{\downarrow} B$, where $\tilde{h}([s]_b)=[hs]_b$.

To show $\tilde h$ preserves operations, we have:

\[
\begin{array}{ll}
   \tilde{h}([s]_b\tilde\diamond_b [t]_b)
   & = \tilde{h}([s_{|_{U\cap V}}]_{b}\ \diamond_{U\cap V}\ [t_{|_{U\cap V}}]_{b})\\
   & = \tilde{h}([s_{|_{U\cap V}}\diamond_b t_{|_{U\cap V}}]_{b}) \\
   & = [h(s_{|_{U\cap V}}\diamond_b t_{|_{U\cap V}})]_{b} \\
   & = [h(s_{|_{U\cap V}})\diamond_b h(t_{|_{U\cap V}})]_{b} \\
   & = [h s_{|_{U\cap V}}]_b\diamond_b [h t_{|_{U\cap V}}]_{b} \\
   & = \tilde{h}([s_{|_{U\cap V}}]_b)\diamond_b \tilde{h}([t_{|_{U\cap V}}]_{b}) \\
\end{array}
\]

The preservation of nullary operations can be easily verified. This proves that $\tilde{h}$ is a morphism of etales of residuated lattices. It follows that the functor $R_{\mathscr{B}}$ of $X \overset{f}{\downarrow} \mathscr{B}$ lifts to $R_{\mathscr{B}}:\textbf{RL-Bundle}(\mathscr{B}) \to \textbf{RL-Etale}(\mathscr{B})$.

To show that $R_{\mathscr{B}}$ is a right adjoint of $i_{\mathscr{B}}$, let $X \overset{f}{\downarrow} \mathscr{B}$ be a bundle of residuated lattices. We show that the morphism
$\varepsilon_f$ defined in Proposition \ref{etafulsubcatbund} preserves the operations and is thus a morphism in $\textbf{RL-Bundle}(\mathscr{B})$. We have:

\[
\begin{array}{ll}
   \varepsilon_f([s]_b\tilde{\diamond}_b [t]_b)
   & = \varepsilon_f([s_{|_{U\cap V}}]_{b}\ \diamond_{U\cap V}\ [t_{|_{U\cap V}}]_{b}) \\
   & = \varepsilon_f([s_{|_{U\cap V}}\diamond_b t_{|_{U\cap V}}]_{b}) \\
   & = (s_{|_{U\cap V}}\diamond_b t_{|_{U\cap V}})(b) \\
   & = s_{|_{U\cap V}}(b)\diamond_b t_{|_{U\cap V}}(b) \\
   & = \varepsilon_f([s]_b)\diamond_b \varepsilon_f([t]_b) \\
\end{array}
\]

Next we show $\varepsilon_f$ is couniversal. Given a bundle of residuated lattices morphism $h:\mathscr{T}\overset{\pi}{\downarrow} \mathscr{B} \to X \overset{f}{\downarrow} \mathscr{B}$, with $\pi$ an \'{e}tal\'{e} of residuated lattices, by Proposition \ref{etafulsubcatbund}, there is a unique etale-morphism $\bar h:\mathscr{T}\overset{\pi}{\downarrow} \mathscr{B} \to \tilde{\Gamma}(f) \overset{\tilde f}{\downarrow} B$ rendering commutative the following triangle.
\begin{center}
$\xymatrix{\tilde{\Gamma}(f) \overset{\tilde{f}}{\downarrow} B \ar[rrr]^{\varepsilon_f} &&& X \overset{f}{\downarrow} B \\
\mathscr{T}\overset{\pi}{\downarrow} \mathscr{B} \ar[u]^{\bar h} \ar[rrru]_h
}$
\end{center}

The commutativity of the above triangle, the preservation of the operations by $\varepsilon_f$ and $h$ and the fact that $\varepsilon_f$ is injective, imply that $\bar h$ preserves the operations. Thus $\bar h$ is a morphism in $\textbf{RL-Etale}(\mathscr{B})$ concluding the proof.
\end{proof}

\section{\'{E}tal\'{e}s of residuated lattices}\label{sec5}

So far, we have only been considering morphisms of \'{e}tal\'{e}s of residuated lattices defined over the same space. However, this section presents a method for transferring an \'{e}tal\'{e} of residuated lattices from one topological space to another, utilizing a continuous map. At the end, we define a contravariant functor, called the section functor, from the category of \'{e}tal\'{e}s of residuated lattices with inverse morphisms, denoted as $\textbf{RLE}_{inv}$, to the category of residuated lattices.

The next proposition shows that \'{e}tal\'{e}s over a topological space are pullback stable in $\textbf{Top}$.
\begin{proposition}\label{pb of es and em} Let $f:\mathscr B\to \mathscr C$ be a continuous map. 
\begin{enumerate}
 \item [$(1)$ \namedlabel{pb of es and em1}{$(1)$}] The pullback of an \'{e}tal\'{e} over $\mathscr{C}$ along $f$ is an \'{e}tal\'{e} over $\mathscr{B}$;
 \item [$(2)$ \namedlabel{pb of es and em2}{$(2)$}] the pullback of an \'{e}tal\'{e} morphism over $\mathscr{C}$ along $f$ is an \'{e}tal\'{e} morphism over $\mathscr{B}$.
\end{enumerate}
\end{proposition}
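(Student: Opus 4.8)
The plan is to realise the pullback concretely as a fibre product and show that its projection to $\mathscr{B}$ is a local homeomorphism, thereby reducing everything to the characterisations already established. Given an \'{e}tal\'{e} $\mathscr{T}\overset{\pi}{\downarrow}\mathscr{C}$, I would form the pullback of $\pi$ along $f$ in $\textbf{Top}$ as the fibre product
\[
f^{*}\mathscr{T}=\{(b,t)\in\mathscr{B}\times\mathscr{T}\mid f(b)=\pi(t)\}
\]
equipped with the subspace topology from $\mathscr{B}\times\mathscr{T}$, together with the projection $\pi_{f}:f^{*}\mathscr{T}\to\mathscr{B}$ sending $(b,t)$ to $b$. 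Part \ref{pb of es and em1} then amounts to proving that $\pi_{f}$ is a local homeomorphism.

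For this, I would fix a point $(b,t)\in f^{*}\mathscr{T}$ and, using that $\pi$ is a local homeomorphism, choose an open neighbourhood $V$ of $t$ in $\mathscr{T}$ on which $\pi|_{V}$ is a homeomorphism onto the open set $\pi(V)$ (open by Lemma \ref{lochomeoope}). Setting $W=(\mathscr{B}\times V)\cap f^{*}\mathscr{T}$, an open neighbourhood of $(b,t)$, I would check that $\pi_{f}|_{W}$ is a homeomorphism onto $f^{\leftarrow}(\pi(V))$, which is open in $\mathscr{B}$ since $f$ is continuous. Indeed $\pi_{f}(W)=f^{\leftarrow}(\pi(V))$; injectivity holds because any two points of $W$ with the same first coordinate $b'$ have second coordinates lying in $V$ and sent by $\pi$ to $f(b')$, hence equal by injectivity of $\pi|_{V}$; and the inverse $b'\mapsto\big(b',(\pi|_{V})^{\leftarrow}(f(b'))\big)$ is continuous, its components being the identity and the composite $(\pi|_{V})^{\leftarrow}\circ f$. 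This exhibits $\pi_{f}$ as a local homeomorphism, so $f^{*}\mathscr{T}\overset{\pi_{f}}{\downarrow}\mathscr{B}$ is an \'{e}tal\'{e}. Alternatively, one could verify continuity, openness, and local injectivity of $\pi_{f}$ separately and invoke Theorem \ref{lhcol}.

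For part \ref{pb of es and em2}, an \'{e}tal\'{e} morphism over $\mathscr{C}$ from $\mathscr{T}_{1}\overset{\pi_{1}}{\downarrow}\mathscr{C}$ to $\mathscr{T}_{2}\overset{\pi_{2}}{\downarrow}\mathscr{C}$ is a continuous $h:\mathscr{T}_{1}\to\mathscr{T}_{2}$ with $\pi_{2}h=\pi_{1}$. I would define its pullback $f^{*}h:f^{*}\mathscr{T}_{1}\to f^{*}\mathscr{T}_{2}$ by $(b,t)\mapsto(b,h(t))$, noting that it is well defined because $f(b)=\pi_{1}(t)=\pi_{2}(h(t))$ and that it commutes with the projections to $\mathscr{B}$. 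It is continuous, being the restriction of $1_{\mathscr{B}}\times h$ to the subspace $f^{*}\mathscr{T}_{1}$ with image in $f^{*}\mathscr{T}_{2}$. Since part \ref{pb of es and em1} makes both $f^{*}\mathscr{T}_{1}$ and $f^{*}\mathscr{T}_{2}$ \'{e}tal\'{e}s over $\mathscr{B}$, Proposition \ref{conopnloc} upgrades the continuous, projection-preserving map $f^{*}h$ to a local homeomorphism, so it is an \'{e}tal\'{e} morphism over $\mathscr{B}$.

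The only real content lies in part \ref{pb of es and em1}, namely the base-change stability of local homeomorphisms. The delicate step will be checking that the inverse of $\pi_{f}|_{W}$ is continuous, which hinges on the local section $(\pi|_{V})^{\leftarrow}$ furnished by $\pi$ together with the continuity of $f$. Once this is secured, part \ref{pb of es and em2} is essentially formal, since Proposition \ref{conopnloc} reduces the morphism condition to mere continuity, which $f^{*}h$ inherits automatically from $1_{\mathscr{B}}\times h$.
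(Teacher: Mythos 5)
Your proof is correct and follows essentially the same route as the paper: the same concrete fibre-product model with the basic neighbourhood $(\mathscr{B}\times V)\cap f^{*}\mathscr{T}$ (the paper's $f'^{\leftarrow}(U)$) mapping homeomorphically onto $f^{\leftarrow}(\pi(V))$, and the same reduction of part $(2)$ to continuity via Proposition \ref{conopnloc}. You merely spell out the injectivity and inverse-continuity checks that the paper leaves as ``easily verified,'' and define $f^{*}h$ by its explicit formula rather than by the pullback's universal property---a purely cosmetic difference.
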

\begin{proof}
\item [\ref{pb of es and em1}:] Let $\mathscr{S}\overset{\phi}{\downarrow} \mathscr{C}$ be an \'{e}tal\'{e}. Consider the following pullback square in the category $\textbf{Top}$.
\begin{figure}[h!]
\centering
\begin{tikzcd}
f^*\mathscr{S} \arrow[dd, "f^*\phi" description] \arrow[rr, "f'" description] &  & \mathscr{S} \arrow[dd, "\phi" description] \\
                                                                          &  &                                            \\
\mathscr{B} \arrow[rr, "f" description]                                   &  & \mathscr{C}
\end{tikzcd}
\captionsetup{labelformat=empty}
\end{figure}

  We know that $f^*\mathscr{S}$ is the set $\{(b,s)\in \mathscr{B}\times \mathscr{S}\mid f(b)=\phi(s)\}$ endowed with the subspace topology induced by the product topology on $\mathscr{B}\times \mathscr{S}$, and that  $f^*\phi$ and $f'$ are natural projections. Given $(b,s)\in f^*\mathscr{S}$, let $U$ be an open neighborhood of $s$ such that $\phi|_U$ is a homeomorphism onto the open set $\phi(U)$. Thus $f'^{\leftarrow}(U)$ is an open neighborhood of $(b,s)$ and $\phi'(f'^{\leftarrow}(U))=f^{\leftarrow}(\phi(U))$ is open. One can easily verify that $f^*\phi|_{f'^{\leftarrow}(U)}$ is a homeomorphism.
\item [\ref{pb of es and em2}:]
 Let $h:\mathscr{S}_{1}\overset{\phi_{1}}{\downarrow} \mathscr{C}\to  \mathscr{S}_{2}\overset{\phi_{2}}{\downarrow} \mathscr{C}$ be a morphism of \'{e}tal\'{e}s. To get the pullback $f^*h$ of $h$ along $f$, we take the pullbacks of $\phi_{1}$ and $\phi_{2}$ along $f$ to get the bottom and big squares in the following diagram. Using the equality $ \phi_{2} h=\phi_{1}$, we get $f^*h$ as the unique morphism rendering commutative the top square and the left triangle.
 \begin{figure}[h!]
\centering
\begin{tikzcd}
f^*\mathscr{S}_{1} \arrow[rr, "f_{1}" description] \arrow[dd, "f^*h" description] \arrow[dddd, "f^*\phi_{1}" description, bend right=49] &  & \mathscr{S}_{1} \arrow[dd, "h" description] \arrow[dddd, "\phi_{1}" description, bend left=49] \\
                                                                                                                                         &  &                                                                                                \\
f^*\mathscr{S}_{2} \arrow[dd, "f^*\phi_{2}" description] \arrow[rr, "f_{2}" description]                                                 &  & \mathscr{S}_{2} \arrow[dd, "\phi_{2}" description]                                             \\
                                                                                                                                         &  &                                                                                                \\
\mathscr{B} \arrow[rr, "f" description]                                                                                                  &  & \mathscr{C}
\end{tikzcd}
\captionsetup{labelformat=empty}
\end{figure}

The commutativity of the left triangle makes $f^*h:f^*\mathscr{S}_{1}\overset{f^*\phi_{1}}{\downarrow}\mathscr{B}\to f^*\mathscr{S}_{2}\overset{f^*\phi_{2}}{\downarrow}\mathscr{B}$ a morphism of \'{e}tal\'{e}s.
\end{proof}
\begin{definition}
  Let $\mathscr{S}\overset{\phi}{\downarrow} \mathscr{C}$ be an \'{e}tal\'{e} and $f:\mathscr{B}\longrightarrow \mathscr{C}$ a continuous map. The \'{e}tal\'{e} $f^*\mathscr{S}\overset{f^*\phi}{\downarrow}\mathscr{B}$ is called
\text\it{the inverse image of} $\mathscr{S}\overset{\phi}{\downarrow} \mathscr{C}$ along $f$.
\end{definition}
\begin{theorem}\label{pb func on es}
Let $f:\mathscr{B}\longrightarrow \mathscr{C}$ be a continuous map. Pulling back along $f$ yields a functor $f^*:\mathbf{Etale}(\mathscr{C}) \longrightarrow \mathbf{Etale}(\mathscr{B})$.
\end{theorem}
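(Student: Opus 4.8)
The plan is to assemble the functor $f^*$ directly from the two parts of Proposition \ref{pb of es and em}, which already furnish its action on objects and on morphisms, and then to discharge the two functoriality axioms (preservation of identities and of composition). The substantive topological content is already done; what remains is bookkeeping around the universal property of the pullback.

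First I would fix, once and for all, the explicit model of the pullback used in Proposition \ref{pb of es and em}: for an \'{e}tal\'{e} $\mathscr{S}\overset{\phi}{\downarrow}\mathscr{C}$, set $f^*\mathscr{S}=\{(b,s)\in\mathscr{B}\times\mathscr{S}\mid f(b)=\phi(s)\}$ with the subspace topology, and let $f^*\phi$ and $f'$ be the projections onto $\mathscr{B}$ and $\mathscr{S}$, respectively. By Proposition \ref{pb of es and em}\ref{pb of es and em1} this is an \'{e}tal\'{e} over $\mathscr{B}$, so $f^*$ is well defined on objects. For a morphism $h:\mathscr{S}_{1}\overset{\phi_{1}}{\downarrow}\mathscr{C}\to\mathscr{S}_{2}\overset{\phi_{2}}{\downarrow}\mathscr{C}$, the induced map from Proposition \ref{pb of es and em}\ref{pb of es and em2} is, in this model, simply $f^*h\colon(b,s)\mapsto(b,h(s))$; it is well defined because $\phi_{2}h=\phi_{1}$ forces $\phi_{2}(h(s))=\phi_{1}(s)=f(b)$, and Proposition \ref{pb of es and em}\ref{pb of es and em2} guarantees it is an \'{e}tal\'{e} morphism over $\mathscr{B}$.

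With this explicit description the functor laws reduce to direct computations. For identities, $f^*(1_{\mathscr{S}})(b,s)=(b,1_{\mathscr{S}}(s))=(b,s)$, so $f^*(1_{\mathscr{S}})=1_{f^*\mathscr{S}}$. For composition, given $h_{1}:\mathscr{S}_{1}\to\mathscr{S}_{2}$ and $h_{2}:\mathscr{S}_{2}\to\mathscr{S}_{3}$ over $\mathscr{C}$, both $f^*(h_{2}h_{1})$ and $f^*h_{2}\circ f^*h_{1}$ send $(b,s)$ to $(b,h_{2}(h_{1}(s)))$, hence coincide. If one prefers to avoid the explicit formula, both laws follow equally from the uniqueness clause of the universal property of the pullback, since on each side the two competing morphisms into $f^*\mathscr{S}$ (respectively $f^*\mathscr{S}_{3}$) agree after postcomposition with both projections $f^*\phi$ and $f'$, and are therefore equal.

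I expect no serious obstacle. The genuinely nontrivial step, namely that the pullback of an \'{e}tal\'{e} and of an \'{e}tal\'{e} morphism is again an \'{e}tal\'{e}, is exactly what Proposition \ref{pb of es and em} supplies. The only point requiring mild care is to commit to the canonical choice of pullback object above, so that the assignments are \emph{strictly} functorial rather than merely functorial up to coherent isomorphism; once that choice is fixed, the verification is routine.
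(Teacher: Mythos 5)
Your proposal is correct and takes essentially the same approach as the paper: the paper's proof simply cites Proposition \ref{pb of es and em} together with \citet[p. 35]{johnstone1977topos}, delegating to that reference exactly the routine verification (canonical choice of pullbacks, preservation of identities and composition via the universal property) that you carry out explicitly. Your point about fixing a canonical model of the pullback so that $f^*$ is \emph{strictly} functorial, rather than functorial up to coherent isomorphism, is precisely the subtlety the cited reference addresses, so nothing is missing.
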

\begin{proof}
Follows from Proposition \ref{pb of es and em} and \citet[p. 35]{johnstone1977topos}.
\end{proof}
\begin{proposition}\label{pb of rles and rles mor} Let $f:\mathscr B\to \mathscr C$ be a continuous map. 
\begin{enumerate}
 \item [$(1)$ \namedlabel{pb of rles and rles mor1}{$(1)$}] The pullback of an \'{e}tal\'{e} of residuated lattices over $\mathscr{C}$ along $f$ is an \'{e}tal\'{e} of residuated lattices over $\mathscr{B}$;
 \item [$(2)$ \namedlabel{pb of rles and rles mor2}{$(2)$}] the pullback of an \'{e}tal\'{e} of residuated lattices morphism over $\mathscr{C}$ along $f$ is an \'{e}tal\'{e} of residuated lattices morphism over $\mathscr{B}$.
\end{enumerate}
\end{proposition}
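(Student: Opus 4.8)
The plan is to equip the underlying étalé $f^*\mathscr{S}\overset{f^*\phi}{\downarrow}\mathscr{B}$, which is already furnished by Proposition \ref{pb of es and em}\ref{pb of es and em1}, with a stalk residuated lattice structure pulled back from $\mathscr{S}\overset{\phi}{\downarrow}\mathscr{C}$, and then to establish continuity and $f^*\phi$-properness of the resulting operations by exploiting the universal property of the pullback rather than arguing directly with the subspace topology.

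For part \ref{pb of rles and rles mor1}, I would first record the relevant identifications. Writing $f^*\mathscr{S}=\{(b,s)\in\mathscr{B}\times\mathscr{S}\mid f(b)=\phi(s)\}$, the projection $f'$ carries each fibre $(f^*\phi)^{\leftarrow}(b)$ bijectively onto $\mathscr{S}_{f(b)}$, and the kernel pair $\kappa_{f^*\phi}$ is identified with $\{(b,s_1,s_2)\mid \phi(s_1)=\phi(s_2)=f(b)\}$, which admits continuous maps $\alpha:\kappa_{f^*\phi}\to\kappa_{\phi}$, $(b,s_1,s_2)\mapsto(s_1,s_2)$, and $\beta:\kappa_{f^*\phi}\to\mathscr{B}$, $(b,s_1,s_2)\mapsto b$. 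For each operation symbol $\diamond$, I define the candidate $f^*\phi$-proper map as the unique morphism into the pullback $f^*\mathscr{S}$ determined by the cone $(\beta,\overset{\phi}{\diamond}\circ\alpha)$; the compatibility condition $f\circ\beta=\phi\circ(\overset{\phi}{\diamond}\circ\alpha)$ required to invoke the universal property is exactly the $\phi$-properness of $\overset{\phi}{\diamond}$, which gives $\phi\,\overset{\phi}{\diamond}(s_1,s_2)=\phi(s_1)=f(b)$. The universal property then yields, in one stroke, both the continuity of $\overset{f^*\phi}{\diamond}$ and the identity $f^*\phi\circ\overset{f^*\phi}{\diamond}=\beta=f^*\phi\circ p_1$, i.e. its $f^*\phi$-properness. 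The global sections $\overset{f^*\phi}{0},\overset{f^*\phi}{1}$ are produced the same way, from the cone $(1_{\mathscr{B}},\overset{\phi}{0}\circ f)$, whose compatibility follows from $\phi\,\overset{\phi}{0}=1_{\mathscr{C}}$. Since these operations are arranged so that $f'$ restricts on each stalk to a bijection intertwining $\overset{f^*\phi}{\diamond}_b$ with $\overset{\phi}{\diamond}_{f(b)}$, every stalk $(f^*\mathscr{S})_b$ is isomorphic to the residuated lattice $\mathscr{S}_{f(b)}$ and is therefore itself a residuated lattice; hence $\kappa_{f^*\phi}$ is a stalk residuated lattice and, by the continuity just obtained, $f^*\mathscr{S}\overset{f^*\phi}{\downarrow}\mathscr{B}$ is an étalé of residuated lattices in the sense of Definition \ref{bund-etal of RL}.

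For part \ref{pb of rles and rles mor2}, given a morphism of étalés of residuated lattices $h:\mathscr{S}_1\overset{\phi_1}{\downarrow}\mathscr{C}\to\mathscr{S}_2\overset{\phi_2}{\downarrow}\mathscr{C}$, Proposition \ref{pb of es and em}\ref{pb of es and em2} already gives that $f^*h$, acting by $(b,s)\mapsto(b,h(s))$, is an étalé morphism over $\mathscr{B}$. To upgrade it, I would apply Lemma \ref{etalresilattmorp} and verify the stalkwise condition: the restriction $(f^*h)|_b$ sends $(b,s)\mapsto(b,h(s))$, and under the stalk isomorphisms $(f^*\mathscr{S}_i)_b\cong(\mathscr{S}_i)_{f(b)}$ of part \ref{pb of rles and rles mor1} it corresponds precisely to $h|_{f(b)}$, which is a morphism of residuated lattices because $h$ is one (again by Lemma \ref{etalresilattmorp}). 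As the stalk identifications are residuated lattice isomorphisms, $(f^*h)|_b$ is a morphism of residuated lattices for every $b\in\mathscr{B}$, and Lemma \ref{etalresilattmorp} concludes that $f^*h$ is a morphism of étalés of residuated lattices.

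The main obstacle is establishing continuity together with $f^*\phi$-properness of the pulled-back operations. The decisive point is that neither needs to be checked by hand against the subspace topology: both fall out simultaneously from the universal property of the pullback, once one observes that the compatibility hypothesis it demands is nothing other than the $\phi$-properness of the operations on $\mathscr{S}$. The remaining verifications — the identification of stalks and the stalkwise check in part \ref{pb of rles and rles mor2} — are then routine.
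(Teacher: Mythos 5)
Your proposal is correct, and its two halves relate to the paper's proof differently. For part \ref{pb of rles and rles mor1} you follow essentially the paper's route: the paper likewise forms the comparison map $\dblp{f'q_{1},f'q_{2}}:\kappa_{f^{*}\phi}\to\kappa_{\phi}$ (your $\alpha$) and obtains $\overset{f^{*}\phi}{\diamond}$ as the unique map into the pullback $f^{*}\mathscr{S}$ induced by the cone whose compatibility is exactly the $\phi$-properness of $\overset{\phi}{\diamond}$, the lower triangle giving properness and the induced-topology description $(f^{*}\phi,f'):f^{*}\mathscr{S}\to\mathscr{B}\times\mathscr{S}$ giving continuity --- the same ``one stroke'' you describe. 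Where you genuinely diverge is part \ref{pb of rles and rles mor2}. The paper verifies the defining square of Definition \ref{resetalmor} for $f^{*}h$ head-on, by an extended arrow chase: it checks the two composites agree after postcomposition with the jointly monic pullback projections $f^{*}\phi_{2}$ and $f_{2}$, which requires first proving the auxiliary identity $\dblp{hp_{1},hp_{2}}\dblp{f_{1}p^{*}_{1},f_{1}p^{*}_{2}}=\dblp{f_{2}q^{*}_{1},f_{2}q^{*}_{2}}\dblp{f^{*}hp^{*}_{1},f^{*}hp^{*}_{2}}$ by the same device. You instead reduce to stalks via Lemma \ref{etalresilattmorp}: since your construction in part \ref{pb of rles and rles mor1} makes $f'$ restrict to a residuated lattice isomorphism $(f^{*}\mathscr{S}_{i})_{b}\cong(\mathscr{S}_{i})_{f(b)}$ intertwining the operations, $(f^{*}h)|_{b}$ is conjugate to $h|_{f(b)}$ and is therefore a morphism of residuated lattices, and the lemma (applied over $\mathscr{B}$) upgrades $f^{*}h$. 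This is legitimate --- the lemma is exactly an equivalence, it applies to each étalé over its own base, and the needed stalk identification is something you established rather than assumed --- and it is substantially shorter than the paper's computation. What the paper's element-free chase buys in exchange is independence from any fibrewise description: it proves commutativity of the square purely from the universal properties, so it would transfer verbatim to settings where one does not want to reason pointwise; your route buys brevity and makes visible the conceptual content, namely that pulling back along $f$ does nothing to the stalks except reindex them by $f$.
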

\begin{proof}
\item [\ref{pb of rles and rles mor1}:] Let $\mathscr{S}\overset{\phi}{\downarrow} \mathscr{C}$ be an \'{e}tal\'{e} of residuated lattices. By Proposition \ref{pb of es and em}\ref{pb of es and em1}, $f^*\mathscr{S}\overset{f^*\phi}{\downarrow}\mathscr{B}$ is an \'{e}tal\'{e}. We have the following pullbacks:

\[
\xymatrix{
f^{*}\mathscr{S} \ar@{->}[r]^{f'} \ar@{->}[d]_{f^{*}\phi} & \mathscr{S} \ar@{->}[d]^{\phi} & \kappa_{f^{*}\phi} \ar@{->}[r]^{q_{2}} \ar@{->}[d]_{q_{1}} & f^{*}\mathscr{S} \ar@{->}[d]^{f^{*}\phi} \\
\mathscr{B} \ar@{->}[r]_{f} & \mathscr{C} & f^{*}\mathscr{S} \ar@{->}[r]_{f^{*}\phi} & \mathscr{C} & }
\]

The commutativity of the above pullbacks yields the commutativity of the outer square in the following diagram. Since the inner square is a pullback, there is a unique morphism $\dblp{f'q_{1},f'q_{2}}$ making the triangles commute.

\[
\xymatrix{\kappa_{f^{*}\phi}\ar@/_2pc/[rdd]_{f'q_{1}} \ar[rd]|{\dblp{f'q_{1},f'q_{2}}} \ar@/^2pc/[rrd]^{f'q_{2}} & & \\ & \kappa_{\phi} \ar[d]_{p_1} \ar[r]^{p_2} & \mathscr{S} \ar[d]^{\phi} \\ & \mathscr{S} \ar[r]_{\phi} & \mathscr{C}}
\]

Let $\diamond$ be a $\phi$-proper map. So, there is a unique map $\overset{f^{*}\phi}{\diamond}$, making the triangles in the following diagram commute.
\[
\xymatrix{\kappa_{f^{*}\phi}\ar@/_2pc/[rdd]_{f^{*}q_{1}} \ar[rd]|{\overset{f^{*}\phi}{\diamond}} \ar@/^2pc/[rrd]^{\diamond\dblp{f'q_{1},f'q_{2}}} & & \\ & f^{*}\mathscr{S} \ar[d]_{f^{*}\phi} \ar[r]^{f'} & \mathscr{S} \ar[d]^{\phi} \\ & \mathscr{B} \ar[r]_{f} & \mathscr{C}}
\]

The commutativity of the lower triangle shows that $\overset{f^{*}\phi}{\diamond}$ is $f^{*}\phi$-proper and the commutativity of the upper triangle give the following commutative square.
\[
\xymatrix{\kappa_{f^{*}\phi}\ar[d]_{\dblp{f'q_{1},f'q_{2}}} \ar[r]^{\overset{f^{*}\phi}{\diamond}} & f^{*}\mathscr{S} \ar[d]^{f'} \\ \kappa_{\phi} \ar[r]_{\diamond} & \mathscr{S}}
\]

Now, the continuity of $\dblp{f'q_{1},f'q_{2}}$, $\diamond$ and $f'$, and the fact that the topology on $f^{*}\mathscr{S}$ is the induced topology by
$\xymatrix{f^{*}\mathscr{S} \ar[rr]^{(f^{*}\phi,f')} && \mathscr{B}\times \mathscr{S}}$, proves the continuity of $\overset{f^{*}\phi}{\diamond}$.
It follows that $f^*\mathscr{S}\overset{f^*\phi}{\downarrow}\mathscr{B}$ is an \'{e}tal\'{e} of residuated lattices.

\item [\ref{pb of rles and rles mor2}:] Let $h:\mathscr{S}_{1}\overset{\phi_{1}}{\downarrow} \mathscr{C}\to  \mathscr{S}_{2}\overset{\phi_{2}}{\downarrow} \mathscr{C}$ be a morphism of \'{e}tal\'{e}s of residuated lattices. By Proposition \ref{pb of es and em}\ref{pb of es and em2}, $f^*h:f^*\mathscr{S}_{1}\overset{f^*\phi_{1}}{\downarrow}\mathscr{B}\to f^*\mathscr{S}_{2}\overset{f^*\phi_{2}}{\downarrow}\mathscr{B}$ is a morphism of \'{e}tal\'{e}s. Using the pullbacks,
    \[
    \xymatrix{
\kappa_{\phi_{1}} \ar@{->}[r]^{p_{2}} \ar@{->}[d]_{p_{1}} & \mathscr{S}_{1} \ar@{->}[d]^{\phi_{1}} & \kappa_{\phi_{2}} \ar@{->}[r]^{q_{2}} \ar@{->}[d]_{q_{1}} & \mathscr{S}_{2} \ar@{->}[d]^{\phi_{2}} &  & f^{*}\mathscr{S}_{1} \ar@{->}[r]^{f_{1}} \ar@{->}[d]_{f^{*}h} \ar@/_3pc/@{->}[dd]_{f^*\phi_{1}} & \mathscr{S}_{1} \ar@{->}[d]^{h} \ar@/^3pc/@{->}[dd]^{\phi_{1}} \\
\mathscr{S}_{1} \ar@{->}[r]_{\phi_{1}} & \mathscr{C} & \mathscr{S}_{2} \ar@{->}[r]_{\phi_{2}} & \mathscr{C} &  & f^{*}\mathscr{S}_{2} \ar@{->}[r]^{f_{2}} \ar@{->}[d]_{f^{*}\phi_{2}} & \mathscr{S}_{2} \ar@{->}[d]^{\phi_{2}} \\
 &  &  &  &  & \mathscr{B} \ar@{->}[r]^{f} & \mathscr{C}
}
    \]

\[
\xymatrix{\kappa_{f^*{\phi_1}} \ar[d]_{p^*_1} \ar[r]^{p^*_2} & f^*\mathscr{S}_1 \ar[d]^{f^*\phi_1} \\ f^*\mathscr{S}_1 \ar[r]_{f^*\phi_1} & \mathscr{B}}
\hspace{1cm}
\xymatrix{\kappa_{f^*{\phi_2}} \ar[d]_{q^*_1} \ar[r]^{q^*_2} & f^*\mathscr{S}_2 \ar[d]^{f^*\phi_2} \\ f^*\mathscr{S}_2 \ar[r]_{f^*\phi_2} & \mathscr{B}}
\]

we get the unique morphisms making the triangles commute in the following diagrams.

\[
\xymatrix{
\kappa_{\phi_{1}} \ar@{->}[rd]^{{\dblp{hp_{1},hp_{2}}}} \ar@/^2pc/@{->}[rrd]^{hp_{2}} \ar@/_2pc/@{->}[rdd]_{hp_{1}} &  &  & \kappa_{f^{*}\phi_{1}} \ar@{->}[rd]^{{\dblp{f^{*}hp^{*}_{1},f^{*}hp^{*}_{2}}}} \ar@/^2pc/@{->}[rrd]^{f^{*}hp^{*}_{2}} \ar@/_2pc/@{->}[rdd]_{f^{*}hp^{*}_{1}} &  &  \\
 & \kappa_{\phi_{2}} \ar@{->}[r]^{q_{2}} \ar@{->}[d]_{q_{1}} & \mathscr{S}_{2} \ar@{->}[d]^{\phi_{2}} &  & \kappa_{f^{*}\phi_{2}} \ar@{->}[r]^{q^{*}_{2}} \ar@{->}[d]_{q^{*}_{1}} & f^{*}\mathscr{S}_{2} \ar@{->}[d]^{f^{*}\phi_{2}} \\
 & \mathscr{S}_{2} \ar@{->}[r]_{\phi_{2}} & \mathscr{C} &  & f^{*}\mathscr{S}_{2} \ar@{->}[r]_{f^{*}\phi_{2}} & \mathscr{B} \\
\kappa_{f^{*}\phi_{1}} \ar@{->}[rd]^{{\dblp{f_{1}p^{*}_{1},f_{1}p^{*}_{2}}}} \ar@/^2pc/@{->}[rrd]^{f_{1}p^{*}_{2}} \ar@/_2pc/@{->}[rdd]_{f_{1}p^{*}_{1}} &  &  & \kappa_{f^{*}\phi_{1}} \ar@{->}[rd]^{\overset{f^{*}\phi_{1}}{\diamond}} \ar@/^2pc/@{->}[rrd]^{{\overset{\phi_{1}}{\diamond}\dblp{f_{1}p^{*}_{1},f_{1}p^{*}_{2}}}} \ar@/_2pc/@{->}[rdd]_{f^{*}\phi_{1}p^{*}_{1}} &  &  \\
 & \kappa_{\phi_{1}} \ar@{->}[r]^{p_{2}} \ar@{->}[d]_{p_{1}} & \mathscr{S}_{1} \ar@{->}[d]^{\phi_{1}} &  & f^{*}\mathscr{S}_{1} \ar@{->}[r]^{f_{1}} \ar@{->}[d]_{f^{*}\phi_{1}} & \mathscr{S}_{1} \ar@{->}[d]^{\phi_{1}} \\
 & \mathscr{S}_{1} \ar@{->}[r]_{\phi_{1}} & \mathscr{C} &  & \mathscr{B} \ar@{->}[r]_{f} & \mathscr{C} \\
\kappa_{f^{*}\phi_{2}} \ar@{->}[rd]^{{\dblp{f_{2}q^{*}_{1},f_{2}q^{*}_{2}}}} \ar@/^2pc/@{->}[rrd]^{f_{2}q^{*}_{2}} \ar@/_2pc/@{->}[rdd]_{f_{2}q^{*}_{1}} &  &  & \kappa_{f^{*}\phi_{2}} \ar@{->}[rd]^{\overset{f^{*}\phi_{2}}{\diamond}} \ar@/^2pc/@{->}[rrd]^{{\overset{\phi_{2}}{\diamond}\dblp{f_{2}q^{*}_{1},f_{2}q^{*}_{2}}}} \ar@/_2pc/@{->}[rdd]_{f^{*}\phi_{2}q^{*}_{1}} &  &  \\
 & \kappa_{\phi_{2}} \ar@{->}[r]^{q_{2}} \ar@{->}[d]_{q_{1}} & \mathscr{S}_{2} \ar@{->}[d]^{\phi_{2}} &  & f^{*}\mathscr{S}_{2} \ar@{->}[r]^{f_{2}} \ar@{->}[d]_{f^{*}\phi_{2}} & \mathscr{S}_{2} \ar@{->}[d]^{\phi_{2}} \\
 & \mathscr{S}_{2} \ar@{->}[r]_{\phi_{2}} & \mathscr{C} &  & \mathscr{B} \ar@{->}[r]_{f} & \mathscr{C}
}
\]

Since $h:\mathscr{S}_{1}\overset{\phi_{1}}{\downarrow} \mathscr{C}\to  \mathscr{S}_{2}\overset{\phi_{2}}{\downarrow} \mathscr{C}$ is a morphism of \'{e}tal\'{e}s of residuated lattices, we have the following commutative diagram:
\[
\xymatrix{\kappa_{\phi_{1}} \ar[d]_{\dblp{hp_{1},hp_{2}}} \ar[r]^{\overset{\phi_{1}}{\diamond}} & \mathscr{S}_{1} \ar[d]^h \\ \kappa_{\phi_{2}} \ar[r]_{\overset{\phi_{2}}{\diamond}} & \mathscr{S}_{2}}
\]

To show that $f^*h:f^*\mathscr{S}_{1}\overset{f^*\phi_{1}}{\downarrow}\mathscr{B}\to f^*\mathscr{S}_{2}\overset{f^*\phi_{2}}{\downarrow}\mathscr{B}$ is a morphism of \'{e}tal\'{e}s of residuated lattices, we need to show that the following square is commutative:
\[
\xymatrix{\kappa_{f^{*}\phi_{1}} \ar[d]_{\dblp{f^{*}hp^{*}_{1},f^{*}hp^{*}_{2}}} \ar[r]^{\overset{f^{*}\phi_{1}}{\diamond}} & f^{*}\mathscr{S}_{1} \ar[d]^{f^{*}h} \\ \kappa_{f^{*}\phi_{2}} \ar[r]_{\overset{f^{*}\phi_{2}}{\diamond}} & f^{*}\mathscr{S}_{2}}
\]

Since the composition of the sides in the above square are to $f^{*}\mathscr{S}_{2}$, it suffices to show they are equal when composed with $f^{*}\phi_{2}$ and $f_{2}$.
But first we show $\dblp{hp_{1},hp_{2}}\dblp{f_{1}p^{*}_{1},f_{1}p^{*}_{2}}$ and  $\dblp{f_{2}q^{*}_{1},f_{2}2^{*}_{2}}\dblp{f^{*}hp^{*}_{1},f^{*}hp^{*}_{2}}$ are equal by showing they are equal when composed with both $q_1$ and $q_2$. We have,
\[
\begin{array}{ll}
  q_{1}\dblp{hp_{1},hp_{2}}\dblp{f_{1}p^{*}_{1},f_{1}p^{*}_{2}} & =hp_{1}\dblp{f_{1}p^{*}_{1},f_{1}p^{*}_{2}}\\
   & = hf_{1}p^{*}_{1} \\
   & = f_{2}f^{*}hp^{*}_{1} \\
   & = f_{2}q^{*}_{1}\dblp{f^{*}hp^{*}_{1},f^{*}hp^{*}_{2}}\\
   & = q_{1}\dblp{f_{2}q^{*}_{1},f_{2}2^{*}_{2}}\dblp{f^{*}hp^{*}_{1},f^{*}hp^{*}_{2}}.
\end{array}
\]

Similarly, we can show that,
$$q_{2}\dblp{hp_{1},hp_{2}}\dblp{f_{1}p^{*}_{1},f_{1}p^{*}_{2}}= q_{2}\dblp{f_{2}q^{*}_{1},f_{2}2^{*}_{2}}\dblp{f^{*}hp^{*}_{1},f^{*}hp^{*}_{2}}$$

Now we have,

\[
\begin{array}{ll}
  f^{*}\phi_{2}f^{*}h\overset{f^{*}\phi_{1}}{\diamond} & =f^{*}\phi_{1}\overset{f^{*}\phi_{1}}{\diamond} \\
   & = f^{*}\phi_{1}p^{*}_{1}\\
   & = f^{*}\phi_{2}f^{*}hp^{*}_{1}\\
   & = f^{*}\phi_{2}q^{*}_{1}\dblp{f^{*}hp^{*}_{1},f^{*}hp^{*}_{2}}\\
   & = f^{*}\phi_{2}\overset{f^{*}\phi_{2}}{\diamond}\dblp{f^{*}hp^{*}_{1},f^{*}hp^{*}_{2}},
\end{array}
\]
and
\[
\begin{array}{ll}
  f_{2}f^{*}h\overset{f^{*}\phi_{1}}{\diamond} & =hf_{1}\overset{f^{*}\phi_{1}}{\diamond}\\
   & = h\overset{\phi_{1}}{\diamond}\dblp{f_{1}p^{*}_{1},f_{1}p^{*}_{2}}\\
   & = \overset{\phi_{2}}{\diamond}\dblp{hp_{1},hp_{2}}\dblp{f_{1}p^{*}_{1},f_{1}p^{*}_{2}}\\
   & = \overset{\phi_{2}}{\diamond}\dblp{f_{2}q^{*}_{1},f_{2}q^{*}_{2}}\dblp{f^{*}hp^{*}_{1},f^{*}hp^{*}_{2}} \\
   & =f_{2}\overset{f^{*}\phi_{2}}{\diamond}\dblp{f^{*}hp^{*}_{1},f^{*}hp^{*}_{2}}.
\end{array}
\]

and that concludes the proof.
\end{proof}

The following result shows that the functor of Theorem \ref{pb func on es} can be lifted to residuated lattices.
\begin{theorem}\label{rlefuncy}
Let $f:\mathscr B\longrightarrow \mathscr C$ be a continuous map. Pulling back along $f$ yields a functor $f^*:\textbf{RL-Etale}(\mathscr{C}) \longrightarrow \textbf{RL-Etale}(\mathscr{B})$.
\end{theorem}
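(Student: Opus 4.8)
The plan is to observe that the theorem is essentially an upgrade of Theorem~\ref{pb func on es} along the data supplied by Proposition~\ref{pb of rles and rles mor}, and so the work has already been done; what remains is to check that the functorial structure of $f^*$ on underlying \'{e}tal\'{e}s is compatible with the residuated lattice structure. Concretely, I would begin by recalling from Theorem~\ref{pb func on es} that pulling back along $f$ already defines a functor $f^*:\mathbf{Etale}(\mathscr{C})\to\mathbf{Etale}(\mathscr{B})$, sending $\mathscr{S}\overset{\phi}{\downarrow}\mathscr{C}$ to $f^*\mathscr{S}\overset{f^*\phi}{\downarrow}\mathscr{B}$ and an \'{e}tal\'{e} morphism $h$ to $f^*h$. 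The task is then to verify that this functor restricts to a functor between the corresponding categories of \'{e}tal\'{e}s of residuated lattices.

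For the action on objects, I would invoke Proposition~\ref{pb of rles and rles mor}\ref{pb of rles and rles mor1}: if $\mathscr{S}\overset{\phi}{\downarrow}\mathscr{C}$ is an \'{e}tal\'{e} of residuated lattices, then $f^*\mathscr{S}\overset{f^*\phi}{\downarrow}\mathscr{B}$ is again an \'{e}tal\'{e} of residuated lattices, equipped with the $f^*\phi$-proper operations $\overset{f^*\phi}{\diamond}$ and global sections $\overset{f^*\phi}{0},\overset{f^*\phi}{1}$ constructed there. For the action on morphisms, I would invoke Proposition~\ref{pb of rles and rles mor}\ref{pb of rles and rles mor2}: a morphism $h:\mathscr{S}_1\overset{\phi_1}{\downarrow}\mathscr{C}\to\mathscr{S}_2\overset{\phi_2}{\downarrow}\mathscr{C}$ of \'{e}tal\'{e}s of residuated lattices is sent by $f^*$ to a morphism $f^*h$ of \'{e}tal\'{e}s of residuated lattices over $\mathscr{B}$. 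Thus $f^*$ takes objects and morphisms of $\textbf{RL-Etale}(\mathscr{C})$ into $\textbf{RL-Etale}(\mathscr{B})$.

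It then only remains to record that $f^*$ preserves identities and composition on $\textbf{RL-Etale}(\mathscr{C})$. Since the underlying maps of the morphisms $f^*h$ in $\textbf{RL-Etale}(\mathscr{B})$ coincide with those produced by the functor $f^*:\mathbf{Etale}(\mathscr{C})\to\mathbf{Etale}(\mathscr{B})$ of Theorem~\ref{pb func on es}, and since composition and identities in $\textbf{RL-Etale}$ are computed on the underlying \'{e}tal\'{e} morphisms (the residuated-lattice condition being a property of a morphism rather than extra data), the equalities $f^*(\mathrm{id})=\mathrm{id}$ and $f^*(h_2 h_1)=f^*h_2\,f^*h_1$ are inherited directly from Theorem~\ref{pb func on es}. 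I do not anticipate a genuine obstacle here; the only point deserving a word of care is the consistency of the $f^*\phi$-proper operations under composition, but these are uniquely determined by the universal property of the pullback (as in the construction in Proposition~\ref{pb of rles and rles mor}\ref{pb of rles and rles mor1}), so the compatibility is automatic. Hence $f^*:\textbf{RL-Etale}(\mathscr{C})\to\textbf{RL-Etale}(\mathscr{B})$ is a well-defined functor.
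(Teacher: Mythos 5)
Your proposal is correct and matches the paper's own proof, which reads in its entirety ``Follows from Theorem \ref{pb func on es}, Proposition \ref{pb of rles and rles mor} and \citet[p. 35]{johnstone1977topos}'' --- you invoke exactly the same two results and merely make explicit the (correct) observation that, since being a morphism of \'{e}tal\'{e}s of residuated lattices is a property of an \'{e}tal\'{e} morphism rather than extra data, preservation of identities and composition is inherited from the underlying functor $f^*:\mathbf{Etale}(\mathscr{C})\to\mathbf{Etale}(\mathscr{B})$.
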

\begin{proof}
Follows from Theorem \ref{pb func on es}, Proposition \ref{pb of rles and rles mor} and \citet[p. 35]{johnstone1977topos}.
\end{proof}

\begin{definition}\label{rlespacdef}
Let $\mathscr{B}$ be a topological space. A pair $(\mathscr{B},\mathscr{T_{B}})$ is called an \textit{$RLE$-space} provided that $\mathscr{T_{B}}\overset{\pi_{\mathscr{B}}}{\downarrow}\mathscr{B}$ is an \'{e}tal\'{e} of residuated lattices, for some local homeomorphism $\pi_{\mathscr{B}}:\mathscr{T_{B}}\to \mathscr{B}$. An \textit{inverse morphism} from $RLE$-space $(\mathscr{B},\mathscr{T_{B}})$ to $RLE$-space $(\mathscr{C},\mathscr{T_{C}})$, in short an \textit{$\textbf{RLE}_{inv}$-morphism}, is a pair $(f,\alpha)$, in symbol $(f,\alpha):(\mathscr{B},\mathscr{T_{B}})\longrightarrow (\mathscr{C},\mathscr{T_{C}})$, consisting of a continuous map $f:\mathscr{B}\longrightarrow \mathscr{C}$ and an $RLE_{\mathscr{B}}$-morphism $\alpha:f^{*}\mathscr{T_{C}}\longrightarrow \mathscr{T_{B}}$. The composition of two $\textbf{RLE}_{inv}$-morphisms
$(f,\alpha):(\mathscr{B},\mathscr{T_{B}})\longrightarrow (\mathscr{C},\mathscr{T_{C}})$ and $(g,\beta):(\mathscr{C},\mathscr{T_{C}})\longrightarrow
(\mathscr{D},\mathscr{T_{D}})$ is given by $(gf,\alpha f^{*}\beta\lambda_{f,g,\mathscr{T_{D}}})$, where the morphism used in the composition is given by Diagram \ref{fig:rlespacdeffig0}, in which all squares are pullbacks (so that $g^*\pi_{\mathscr{D}}=\pi_{\mathscr{C}}\beta$, $f^*\pi_{\mathscr{C}}=\pi_{\mathscr{B}}\alpha$, $f^*g^*\pi_{\mathscr{D}}=\pi_{\mathscr{B}}\alpha f^*\beta$ and $(gf)^*\pi_{\mathscr{D}}=\pi_{\mathscr{B}}\alpha f^*\beta\lambda_{f,g,\mathscr{T_{D}}}$) and $\lambda_{f,g,\mathscr{T_{D}}}$ is the unique isomorphism between two pullbacks.
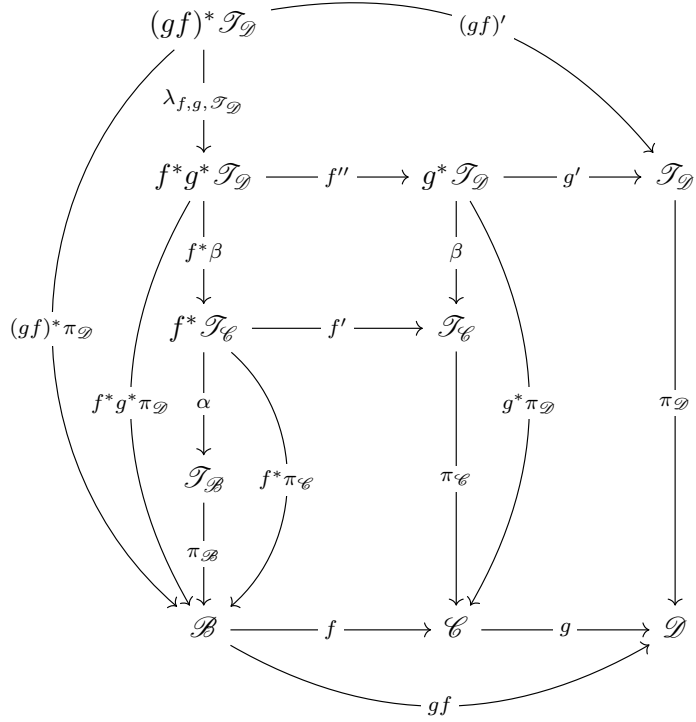
\begin{figure}[h!]
\centering
\begin{tikzcd}
(gf)^{*}\mathscr{T_{D}} \arrow[dd, "{\lambda_{f,g,\mathscr{T_{D}}}}" description] \arrow[rrrrdd, "(gf)'" description, bend left] \arrow[dddddddd, "(gf)^*\pi_{\mathscr{D}}" description, bend right=49] &  &                                                                                                                                                 &  &                                                                           \\
                                                                                                                                                                                                        &  &                                                                                                                                                 &  &                                                                           \\
f^*g^{*}\mathscr{T_{D}} \arrow[dd, "f^*\beta" description] \arrow[rr, "f''" description] \arrow[dddddd, "f^*g^*\pi_{\mathscr{D}}" description, bend right]                                              &  & g^{*}\mathscr{T_{D}} \arrow[dd, "\beta" description] \arrow[rr, "g'" description] \arrow[dddddd, "g^*\pi_{\mathscr{D}}" description, bend left] &  & \mathscr{T}_{\mathscr{D}} \arrow[dddddd, "\pi_{\mathscr{D}}" description] \\
                                                                                                                                                                                                        &  &                                                                                                                                                 &  &                                                                           \\
f^{*}\mathscr{T_{C}} \arrow[dd, "\alpha" description] \arrow[rr, "f'" description] \arrow[dddd, "f^*\pi_{\mathscr{C}}" description, bend left=49]                                                       &  & \mathscr{T}_{\mathscr{C}} \arrow[dddd, "\pi_{\mathscr{C}}" description]                                                                         &  &                                                                           \\
                                                                                                                                                                                                        &  &                                                                                                                                                 &  &                                                                           \\
\mathscr{T}_{\mathscr{B}} \arrow[dd, "\pi_{\mathscr{B}}" description]                                                                                                                                   &  &                                                                                                                                                 &  &                                                                           \\
                                                                                                                                                                                                        &  &                                                                                                                                                 &  &                                                                           \\
\mathscr{B} \arrow[rr, "f" description] \arrow[rrrr, "gf" description, bend right]                                                                                                                      &  & \mathscr{C} \arrow[rr, "g" description]                                                                                                         &  & \mathscr{D}
\end{tikzcd}
\caption{The composition of two $\textbf{RLE}_{inv}$-morphisms}
\label{fig:rlespacdeffig0}
\end{figure}
\end{definition}


In Definition \ref{rlespacdef}, $\alpha$,  $\beta$ and thus $f^*\beta$ are $\textbf{RLE}_{inv}$-morphisms, and $\lambda_{f,g,\mathscr{T_{D}}}$ can be easily verified to be an $\textbf{RLE}_{inv}$-morphism. Thus the composition
$\alpha f^*\beta\lambda_{f,g,\mathscr{T_{D}}}$ is an $\textbf{RLE}_{inv}$-morphism, guaranteeing the well-definedness of the composition.

\begin{theorem}
The class of $RLE$-spaces with $\textbf{RLE}_{inv}$-morphisms, forms a category, denoted by $\textbf{RLE}_{inv}$.
\end{theorem}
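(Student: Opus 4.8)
The plan is to verify the three defining axioms of a category directly: the existence of an identity on each object, the left and right unit laws, and associativity of composition. The composition of $\textbf{RLE}_{inv}$-morphisms and its well-definedness are already supplied by Definition \ref{rlespacdef} and the remark following it, so these may be used freely; throughout, the first coordinate of every composite lives in $\textbf{Top}$, where composition of continuous maps is associative and unital, so all the genuine content concerns the second coordinate and the comparison isomorphisms $\lambda_{f,g,-}$.

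First I would construct the identity. For an $RLE$-space $(\mathscr{B},\mathscr{T_{B}})$, pulling $\mathscr{T_{B}}\overset{\pi_{\mathscr{B}}}{\downarrow}\mathscr{B}$ back along $1_{\mathscr{B}}$ produces a pullback $1_{\mathscr{B}}^{*}\mathscr{T_{B}}$ that is canonically isomorphic to $\mathscr{T_{B}}$; write $\iota_{\mathscr{B}}:1_{\mathscr{B}}^{*}\mathscr{T_{B}}\to\mathscr{T_{B}}$ for this canonical comparison isomorphism. Since the fibrewise residuated lattice operations on $1_{\mathscr{B}}^{*}\mathscr{T_{B}}$ are, by the construction in Proposition \ref{pb of rles and rles mor}\ref{pb of rles and rles mor1}, exactly those transported from $\mathscr{T_{B}}$ along $\iota_{\mathscr{B}}$, the map $\iota_{\mathscr{B}}$ preserves them, so $(1_{\mathscr{B}},\iota_{\mathscr{B}})$ is an $\textbf{RLE}_{inv}$-morphism. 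I would take it as the identity on $(\mathscr{B},\mathscr{T_{B}})$. For the unit laws one forms the composites $(1_{\mathscr{C}},\iota_{\mathscr{C}})\circ(f,\alpha)$ and $(f,\alpha)\circ(1_{\mathscr{B}},\iota_{\mathscr{B}})$ using Definition \ref{rlespacdef}; in each case the relevant $\lambda$ (namely $\lambda_{f,1_{\mathscr{C}},\mathscr{T_{C}}}$ and $\lambda_{1_{\mathscr{B}},f,\mathscr{T_{C}}}$) reduces to a canonical isomorphism, and using naturality of $\iota$ together with functoriality of $f^{*}$ (Theorem \ref{rlefuncy}) these comparison isomorphisms cancel against $\iota$, collapsing the second coordinate back to $\alpha$.

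The substantive axiom is associativity. Given composable $(f,\alpha):(\mathscr{B},\mathscr{T_{B}})\to(\mathscr{C},\mathscr{T_{C}})$, $(g,\beta):(\mathscr{C},\mathscr{T_{C}})\to(\mathscr{D},\mathscr{T_{D}})$ and $(h,\gamma):(\mathscr{D},\mathscr{T_{D}})\to(\mathscr{E},\mathscr{T_{E}})$, both bracketings of the triple composite have first coordinate $hgf$. Unwinding Definition \ref{rlespacdef} twice, the two second coordinates are maps $(hgf)^{*}\mathscr{T_{E}}\to\mathscr{T_{B}}$ beginning with $\alpha\circ f^{*}\beta$; after cancelling this common prefix the required equality reduces, via naturality of $\lambda_{f,g,-}$ in its object variable, to the single cocycle identity $\lambda_{f,g,h^{*}\mathscr{T_{E}}}\circ\lambda_{gf,h,\mathscr{T_{E}}}=f^{*}(\lambda_{g,h,\mathscr{T_{E}}})\circ\lambda_{f,hg,\mathscr{T_{E}}}$. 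Both sides of this identity are isomorphisms $(hgf)^{*}\mathscr{T_{E}}\to f^{*}g^{*}h^{*}\mathscr{T_{E}}$, and I would prove they coincide by observing that each is a mediating map between iterated pullbacks of $\mathscr{T_{E}}$: since every square occurring in Diagram \ref{fig:rlespacdeffig0} is a pullback and $f^{*}$ is a functor, both composites are cones over the same diagram into the same limit, and the uniqueness of the comparison map forces them to agree.

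The main obstacle is precisely this coherence (cocycle) step for associativity; everything else is bookkeeping. It is, however, a diagram chase rather than a genuine difficulty: the essential mechanism is the universal property of pullbacks, which guarantees uniqueness of mediating isomorphisms and thereby makes both the cocycle identity and the reductions of the $\lambda$'s in the unit laws automatic. Once the indices are matched and the naturality of $\lambda_{f,g,-}$ is invoked, the identity, unit, and associativity verifications all follow, establishing that $RLE$-spaces and $\textbf{RLE}_{inv}$-morphisms constitute the category $\textbf{RLE}_{inv}$.
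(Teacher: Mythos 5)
Your proof is correct, and its engine is the same as the paper's---the uniqueness clause of the universal property of pullbacks---but you organize the verification genuinely differently. The paper proves associativity by a fully explicit chase: it writes out both bracketings via Definition \ref{rlespacdef}, reduces to the suffix equality (its Equation \ref{e1}), and then descends through three nested applications of the pullback property, testing against the two projections at each level (Equations \ref{e11}, \ref{e12}, \ref{e2}, \ref{e21}, \ref{e22}, \ref{e3}, \ref{e31}, \ref{e32}) against the squares of Diagrams \ref{fig:rlespacdeffig}--\ref{fig:rlespacdeffig2}. You instead factor this into two reusable lemmas: naturality of $\lambda_{f,g,-}$ in the object variable, used to slide $(gf)^{*}\gamma$ past $\lambda_{f,g,\mathscr{T_{D}}}$, and the cocycle identity $\lambda_{f,g,h^{*}\mathscr{T_{E}}}\circ\lambda_{gf,h,\mathscr{T_{E}}}=f^{*}(\lambda_{g,h,\mathscr{T_{E}}})\circ\lambda_{f,hg,\mathscr{T_{E}}}$, proved by uniqueness of mediating maps between two pullbacks of $\pi_{\mathscr{E}}$ along $hgf$; after composing with the projections, your cocycle identity is literally the paper's Equation \ref{e3}. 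Your route buys conceptual clarity (it is the standard pseudofunctor coherence argument, and the unit laws fall out of the same uniqueness principle); the paper's buys self-containedness, never needing naturality of $\lambda$ as a separate statement. Two points to mind in a full write-up: naturality of $\lambda_{f,g,-}$ is not proved anywhere in the paper, so you must supply it (a one-line uniqueness argument, since both $\lambda_{f,g,X'}\circ(gf)^{*}u$ and $f^{*}g^{*}u\circ\lambda_{f,g,X}$ mediate into $f^{*}g^{*}X'$ with the same projection data); and your two cancellations (of the prefix $\alpha\, f^{*}\beta$, and then of $f^{*}g^{*}\gamma$) must be phrased as sufficiency reductions, since these prefixes need not be monic---which is exactly how the paper phrases its own reductions. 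On identities you diverge only in bookkeeping: the paper chooses the pullback of $\pi_{\mathscr{B}}$ along $1_{\mathscr{B}}$ to be $\mathscr{T_{B}}$ itself (Figure \ref{fig:neutralidenmor}), making its identity $(1_{\mathscr{B}},1_{\mathscr{T}_{\mathscr{B}}})$ on the nose, whereas your $(1_{\mathscr{B}},\iota_{\mathscr{B}})$ is the pullback-choice-free version of the same morphism, with the operation-preservation of $\iota_{\mathscr{B}}$ checked via Proposition \ref{pb of rles and rles mor} rather than obtained for free from the choice.
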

\begin{proof}
Letting the identity morphism on $(\mathscr{B},\mathscr{T_{B}})$ to be $(1_{\mathscr B},1_{\mathscr{T}_{\mathscr B}})$, we need to show it acts neutral on both sides. Let $(f,\alpha):(\mathscr{A},\mathscr{T_{A}})\longrightarrow (\mathscr{B},\mathscr{T_{B}})$ be an $\textbf{RLE}_{inv}$-morphism. Using Theorem \ref{rlefuncy}, which implies that $f^*1_{\mathscr{T}_{\mathscr B}}=1_{f^{*}\mathscr{T_{B}}}$, and Definition \ref{rlespacdef}, it follows that $(f,\alpha)(1_{\mathscr B},1_{\mathscr{T}_{\mathscr B}})=(f,\alpha)$ (See Fig. \ref{fig:neutralidenmor}). Analogously, one can show that $(1_{\mathscr B},1_{\mathscr{T}_{\mathscr B}})(f,\alpha)=(f,\alpha)$, for some $\textbf{RLE}_{inv}$-morphism $(f,\alpha):(\mathscr{B},\mathscr{T_{B}})\longrightarrow (\mathscr{C},\mathscr{T_{C}})$.
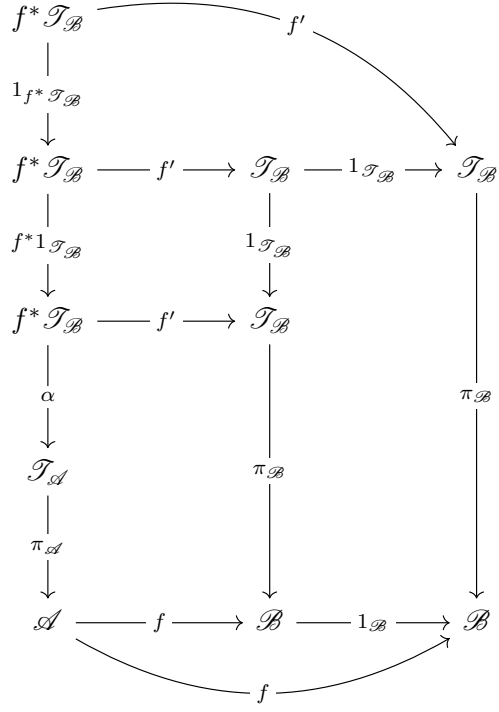
\begin{figure}[h!]
\centering
\begin{tikzcd}
f^{*}\mathscr{T_{B}} \arrow[dd, "1_{f^{*}\mathscr{T_{B}}}" description] \arrow[rrrrdd, "f'" description, bend left] &  &                                                                                                                               &  &                                                                           \\
                                                                                                                    &  &                                                                                                                               &  &                                                                           \\
f^{*}\mathscr{T_{B}} \arrow[dd, "f^*1_{\mathscr{T}_{\mathscr B}}" description] \arrow[rr, "f'" description]         &  & \mathscr{T_{B}} \arrow[dd, "1_{\mathscr{T}_{\mathscr B}}" description] \arrow[rr, "1_{\mathscr{T}_{\mathscr B}}" description] &  & \mathscr{T}_{\mathscr{B}} \arrow[dddddd, "\pi_{\mathscr{B}}" description] \\
                                                                                                                    &  &                                                                                                                               &  &                                                                           \\
f^{*}\mathscr{T_{B}} \arrow[dd, "\alpha" description] \arrow[rr, "f'" description]                                  &  & \mathscr{T_{B}} \arrow[dddd, "\pi_{\mathscr{B}}" description]                                                                 &  &                                                                           \\
                                                                                                                    &  &                                                                                                                               &  &                                                                           \\
\mathscr{T}_{\mathscr{A}} \arrow[dd, "\pi_{\mathscr{A}}" description]                                               &  &                                                                                                                               &  &                                                                           \\
                                                                                                                    &  &                                                                                                                               &  &                                                                           \\
\mathscr{A} \arrow[rr, "f" description] \arrow[rrrr, "f" description, bend right]                                   &  & \mathscr{B} \arrow[rr, "1_{\mathscr{B}}" description]                                                                         &  & \mathscr{B}
\end{tikzcd}
\caption{The identity morphism of $RLE$-space $(\mathscr{B},\mathscr{T_{B}})$}
\label{fig:neutralidenmor}
\end{figure}

To show that composition is associative, let the $\textbf{RLE}_{inv}$-morphisms
\[\xymatrix{(\mathscr{B},\mathscr{T_{B}}) \ar[r]^{(f,\alpha)} & (\mathscr{C},\mathscr{T_{C}}) \ar[r]^{(g,\beta)} & (\mathscr{D},\mathscr{T_{D}}) \ar[r]^{(h,\gamma)} & (\mathscr{E},\mathscr{T_{E}})}\]
be given. Following Diagram \ref{fig:rlespacdeffig}, in which all squares are pullbacks and all triangles commute, it gives the compositions
\begin{center}
  $(g,\beta)(f,\alpha)=(gf,\alpha f^{*}\beta\lambda_{f,g,\mathscr{T_{D}}})$ and $(h,\gamma)(g,\beta)=(hg,\beta g^{*}\gamma\lambda_{g,h,\mathscr{T_{E}}})$.
\end{center}

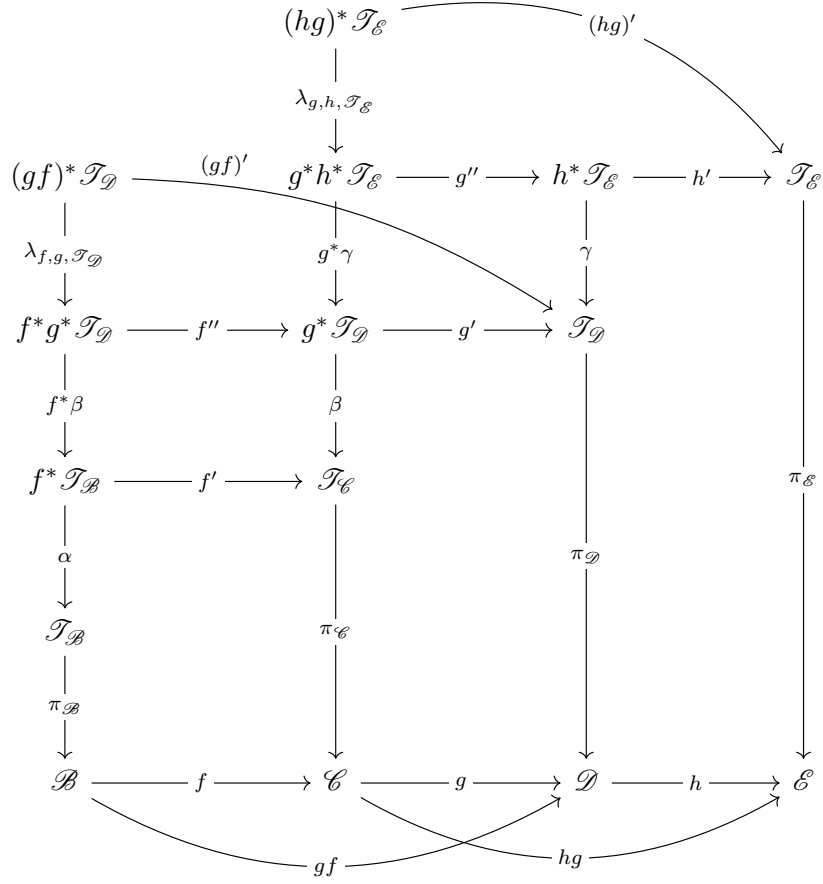
\begin{figure}[h!]
\centering
\begin{tikzcd}
                                                                                                                                               &  & (hg)^{*}\mathscr{T}_{\mathscr{E}} \arrow[rrrrdd, "(hg)'" description, bend left] \arrow[dd, "{\lambda_{g,h,\mathscr{T_{E}}}}" description] &  &                                                                                              &  &                                                                             \\
                                                                                                                                               &  &                                                                                                                                            &  &                                                                                              &  &                                                                             \\
(gf)^{*}\mathscr{T}_{\mathscr{D}} \arrow[dd, "{\lambda_{f,g,\mathscr{T_{D}}}}" description] \arrow[rrrrdd, "    (gf)'" very near start, bend left=15] &  & g^{*}h^{*}\mathscr{T}_{\mathscr{E}} \arrow[rr, "g''" description] \arrow[dd, "g^{*}\gamma" description]                                    &  & h^{*}\mathscr{T}_{\mathscr{E}} \arrow[dd, "\gamma" description] \arrow[rr, "h'" description] &  & \mathscr{T}_{\mathscr{E}} \arrow[dddddddd, "\pi_{\mathscr{E}}" description] \\
                                                                                                                                               &  &                                                                                                                                            &  &                                                                                              &  &                                                                             \\
f^{*}g^{*}\mathscr{T}_{\mathscr{D}} \arrow[dd, "f^*\beta" description] \arrow[rr, "f''" description]                                           &  & g^{*}\mathscr{T}_{\mathscr{D}} \arrow[dd, "\beta" description] \arrow[rr, "g'" description]                                                &  & \mathscr{T}_{\mathscr{D}} \arrow[dddddd, "\pi_{\mathscr{D}}" description]                    &  &                                                                             \\
                                                                                                                                               &  &                                                                                                                                            &  &                                                                                              &  &                                                                             \\
f^{*}\mathscr{T_{B}} \arrow[dd, "\alpha" description] \arrow[rr, "f'" description]                                                             &  & \mathscr{T_{C}} \arrow[dddd, "\pi_{\mathscr{C}}" description]                                                                              &  &                                                                                              &  &                                                                             \\
                                                                                                                                               &  &                                                                                                                                            &  &                                                                                              &  &                                                                             \\
\mathscr{T}_{\mathscr{B}} \arrow[dd, "\pi_{\mathscr{B}}" description]                                                                          &  &                                                                                                                                            &  &                                                                                              &  &                                                                             \\
                                                                                                                                               &  &                                                                                                                                            &  &                                                                                              &  &                                                                             \\
\mathscr{B} \arrow[rr, "f" description] \arrow[rrrr, "gf" description, bend right]                                                             &  & \mathscr{C} \arrow[rr, "g" description] \arrow[rrrr, "hg" description, bend right]                                                         &  & \mathscr{D} \arrow[rr, "h" description]                                                      &  & \mathscr{E}
\end{tikzcd}
\caption{Associativity of the composition of $\textbf{RLE}_{inv}$-morphisms}
\label{fig:rlespacdeffig}
\end{figure}

Following Diagram \ref{fig:rlespacdeffig1}, it verifies that
\begin{center}
  $(h,\gamma)((g,\beta)(f,\alpha))=(h(gf),\alpha f^{*}\beta\lambda_{f,g,\mathscr{T_{D}}}(gf)^*\gamma\lambda_{gf,h,\mathscr{T_{E}}}).$

\end{center}

\begin{figure}[h!]
\centering
\begin{tikzcd}
(h(gf))^{*}\mathscr{T_{E}} \arrow[dd, "{\lambda_{gf,h,\mathscr{T_{E}}}}" description] \arrow[rrrrdd, "(h(gf))'" description, bend left] &  &                                                                                    &  &                                                                           \\
                                                                                                                                        &  &                                                                                    &  &                                                                           \\
(gf)^{*}h^{*}\mathscr{T_{E}} \arrow[dd, "(gf)^*\gamma" description] \arrow[rr, "(gf)''" description]                                    &  & h^{*}\mathscr{T_{E}} \arrow[dd, "\gamma" description] \arrow[rr, "h'" description] &  & \mathscr{T}_{\mathscr{E}} \arrow[dddddd, "\pi_{\mathscr{E}}" description] \\
                                                                                                                                        &  &                                                                                    &  &                                                                           \\
(gf)^{*}\mathscr{T_{D}} \arrow[dd, "{\alpha f^{*}\beta\lambda_{f,g,\mathscr{T_{D}}}}" description] \arrow[rr, "(gf)'" description]      &  & \mathscr{T}_{\mathscr{D}} \arrow[dddd, "\pi_{\mathscr{D}}" description]            &  &                                                                           \\
                                                                                                                                        &  &                                                                                    &  &                                                                           \\
\mathscr{T}_{\mathscr{B}} \arrow[dd, "\pi_{\mathscr{B}}" description]                                                                   &  &                                                                                    &  &                                                                           \\
                                                                                                                                        &  &                                                                                    &  &                                                                           \\
\mathscr{B} \arrow[rr, "gf" description]                                                                                                &  & \mathscr{D} \arrow[rr, "h" description]                                            &  & \mathscr{E}
\end{tikzcd}
\caption{The composition of two $\textbf{RLE}_{inv}$-morphisms $gf$ and $h$}
\label{fig:rlespacdeffig1}
\end{figure}

Following Diagram \ref{fig:rlespacdeffig2}, it verifies that
\begin{center}
  $((h,\gamma)(g,\beta))(f,\alpha)=((hg)f,\alpha f^{*}(\beta g^{*}\gamma\lambda_{g,h,\mathscr{T_E}})\lambda_{f,hg,\mathscr{T_{E}}}).$

\end{center}

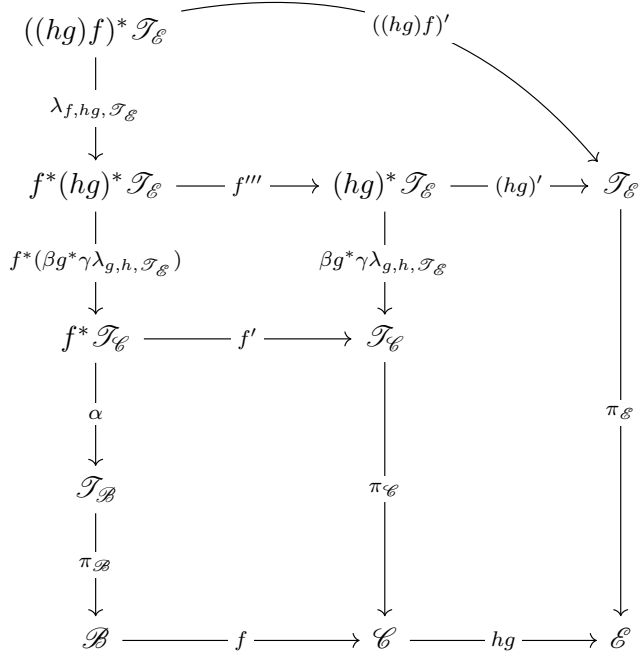
\begin{figure}[h!]
\centering
\begin{tikzcd}
((hg)f)^{*}\mathscr{T_{E}} \arrow[dd, "{\lambda_{f,hg,\mathscr{T_{E}}}}" description] \arrow[rrrrdd, "((hg)f)'" description, bend left]       &  &                                                                                                                                    &  &                                                                           \\
                                                                                                                                              &  &                                                                                                                                    &  &                                                                           \\
f^{*}(hg)^{*}\mathscr{T_{E}} \arrow[rr, "f'''" description] \arrow[dd, "{f^{*}(\beta g^{*}\gamma\lambda_{g,h,\mathscr{T_E}})}" description] &  & (hg)^{*}\mathscr{T_{E}} \arrow[dd, "{\beta g^{*}\gamma\lambda_{g,h,\mathscr{T_E}}}" description] \arrow[rr, "(hg)'" description] &  & \mathscr{T}_{\mathscr{E}} \arrow[dddddd, "\pi_{\mathscr{E}}" description] \\
                                                                                                                                              &  &                                                                                                                                    &  &                                                                           \\
f^{*}\mathscr{T_{C}} \arrow[dd, "\alpha" description] \arrow[rr, "f'" description]                                                            &  & \mathscr{T}_{\mathscr{C}} \arrow[dddd, "\pi_{\mathscr{C}}" description]                                                            &  &                                                                           \\
                                                                                                                                              &  &                                                                                                                                    &  &                                                                           \\
\mathscr{T}_{\mathscr{B}} \arrow[dd, "\pi_{\mathscr{B}}" description]                                                                         &  &                                                                                                                                    &  &                                                                           \\
                                                                                                                                              &  &                                                                                                                                    &  &                                                                           \\
\mathscr{B} \arrow[rr, "f" description]                                                                                                       &  & \mathscr{C} \arrow[rr, "hg" description]                                                                                           &  & \mathscr{E}
\end{tikzcd}
\caption{The composition of two $\textbf{RLE}_{inv}$-morphisms $f$ and $hg$}
\label{fig:rlespacdeffig2}
\end{figure}

To show that $(h,\gamma)((g,\beta)(f,\alpha))=((h,\gamma)(g,\beta))(f,\alpha)$, we know $h(gf)=(hg)f$, so we need to show
$\alpha f^{*}\beta\lambda_{f,g,\mathscr{T_{D}}}(gf)^*\gamma\lambda_{gf,h,\mathscr{T_{E}}}=\alpha f^{*}(\beta g^{*}\gamma\lambda_{g,h,\mathscr{T_E}})\lambda_{f,hg,\mathscr{T_{E}}}$, which holds if the equality
\begin{equation}\label{e1}
  f^{*}\beta\lambda_{f,g,\mathscr{T_{D}}}(gf)^*\gamma\lambda_{gf,h,\mathscr{T_{E}}}=f^{*}(\beta g^{*}\gamma\lambda_{g,h,\mathscr{T_E}})\lambda_{f,hg,\mathscr{T_{E}}}
\end{equation}
holds. Codomains of two morphisms in the equality \ref{e1} are $f^*(\mathscr{T_{B}})$, so by a pullback in Diagram \ref{fig:rlespacdeffig}, Equation \ref{e1} holds if and only if equalities
\begin{equation}\label{e11}
  \pi_{\mathscr{B}}\alpha f^{*}\beta\lambda_{f,g,\mathscr{T_{D}}}(gf)^*\gamma\lambda_{gf,h,\mathscr{T_{E}}}=\pi_{\mathscr{B}}\alpha f^{*}(\beta g^{*}\gamma\lambda_{g,h,\mathscr{T_E}})\lambda_{f,hg,\mathscr{T_{E}}}
\end{equation}
and
\begin{equation}\label{e12}
  f' f^{*}\beta\lambda_{f,g,\mathscr{T_{D}}}(gf)^*\gamma\lambda_{gf,h,\mathscr{T_{E}}}=f' f^{*}(\beta g^{*}\gamma\lambda_{g,h,\mathscr{T_E}})\lambda_{f,hg,\mathscr{T_{E}}}
\end{equation}
hold. For Equation \ref{e11} we have,
\[
\begin{array}{ll}
  \pi_{\mathscr{B}}\alpha f^{*}\beta\lambda_{f,g,\mathscr{T_{D}}}(gf)^*\gamma\lambda_{gf,h,\mathscr{T_{E}}} & \\ =(h(gf))^*\pi_{\mathscr{E}}& \mbox{ by the outer pullback square in Diagram \ref{fig:rlespacdeffig1}}\\
=((hg)f)^*\pi_{\mathscr{E}}&  \mbox{ by associativity of morphisms in $\textbf{Top}$}\\
=\pi_{\mathscr{B}}\alpha f^{*}(\beta g^{*}\gamma\lambda_{g,h,\mathscr{T_E}})\lambda_{f,hg,\mathscr{T_{E}}}.&  \mbox{  by the outer pullback square in Diagram \ref{fig:rlespacdeffig2}}
\end{array}
\]

For Equation \ref{e12} the left side is
\[
\begin{array}{ll}
  f' f^{*}\beta\lambda_{f,g,\mathscr{T_{D}}}(gf)^*\gamma\lambda_{gf,h,\mathscr{T_{E}}} & \\
 =\beta f'' \lambda_{f,g,\mathscr{T_{D}}}(gf)^*\gamma\lambda_{gf,h,\mathscr{T_{E}}},& \mbox{ by the outer pullback square in Diagram \ref{fig:rlespacdeffig}}
\end{array}
\]

and the right side is
\[
\begin{array}{ll}
  f' f^{*}(\beta g^{*}\gamma\lambda_{g,h,\mathscr{T_E}})\lambda_{f,hg,\mathscr{T_{E}}} & \\
 =\beta g^{*}\gamma\lambda_{g,h,\mathscr{T_E}}f'''\lambda_{gf,h,\mathscr{T_{E}}}.& \mbox{ by the outer pullback square in Diagram \ref{fig:rlespacdeffig2}}
\end{array}
\]

So Equation \ref{e12} holds if the equality
\begin{equation}\label{e2}
  f'' \lambda_{f,g,\mathscr{T_{D}}}(gf)^*\gamma\lambda_{gf,h,\mathscr{T_{E}}}= g^{*}\gamma\lambda_{g,h,\mathscr{T_E}}f'''\lambda_{gf,h,\mathscr{T_{E}}}
\end{equation}
holds. Codomains of the two morphisms in Equation \ref{e2} are $g^*\mathscr{T_{D}}$, so by a pullback in Diagram \ref{fig:rlespacdeffig}, Equation \ref{e2} holds if and only if equalities
\begin{equation}\label{e21}
 \pi_{\mathscr{C}}\beta f'' \lambda_{f,g,\mathscr{T_{D}}}(gf)^*\gamma\lambda_{gf,h,\mathscr{T_{E}}}= \pi_{\mathscr{C}}\beta g^{*}\gamma\lambda_{g,h,\mathscr{T_E}}f'''\lambda_{gf,h,\mathscr{T_{E}}}
\end{equation}
and
\begin{equation}\label{e22}
  g' f'' \lambda_{f,g,\mathscr{T_{D}}}(gf)^*\gamma\lambda_{gf,h,\mathscr{T_{E}}}= g'g^{*}\gamma\lambda_{g,h,\mathscr{T_E}}f'''\lambda_{gf,h,\mathscr{T_{E}}}
\end{equation}
hold. For Equation \ref{e21} we have,
\[
\begin{array}{ll}
  \pi_{\mathscr{C}}\beta f'' \lambda_{f,g,\mathscr{T_{D}}}(gf)^*\gamma\lambda_{gf,h,\mathscr{T_{E}}} & \\
=f\pi_{\mathscr{B}}\alpha f^{*}\beta \lambda_{f,g,\mathscr{T_{D}}}(gf)^*\gamma\lambda_{gf,h,\mathscr{T_{E}}}& \mbox{ by a pullback square in Diagram \ref{fig:rlespacdeffig}}\\
=f(h(gf))^{*}\pi_{\mathscr{E}}&  \mbox{ by the outer pullback square in Diagram \ref{fig:rlespacdeffig1}}\\
=f((hg)f)^{*}\pi_{\mathscr{E}}&  \mbox{  by associativity of morphisms in $\textbf{Top}$}\\
=f\pi_{\mathscr{B}}\alpha f^{*}(\beta g^{*}\gamma\lambda_{g,h,\mathscr{T_{E}}})\lambda_{f,hg,\mathscr{T_{E}}} &  \mbox{ by the outer pullback square in Diagram \ref{fig:rlespacdeffig2}}\\
=\pi_{\mathscr{C}}f' f^{*}(\beta g^{*}\gamma\lambda_{g,h,\mathscr{T_{E}}})\lambda_{f,hg,\mathscr{T_{E}}} &  \mbox{ by a pullback square in Diagram \ref{fig:rlespacdeffig}}\\
=\pi_{\mathscr{C}}\beta g^{*}\gamma\lambda_{g,h,\mathscr{T_{E}}} f'''\lambda_{f,hg,\mathscr{T_{E}}}. &  \mbox{ by a pullback square in Diagram \ref{fig:rlespacdeffig2}}\\
\end{array}
\]

For Equation \ref{e22} the left side is
\[
\begin{array}{ll}
g' f'' \lambda_{f,g,\mathscr{T_{D}}}(gf)^*\gamma\lambda_{gf,h,\mathscr{T_{E}}} & \\
=(gf)'(gf)^*\gamma\lambda_{gf,h,\mathscr{T_{E}}}& \mbox{ by a commutative triangle in Diagram \ref{fig:rlespacdeffig}}\\
=\gamma(gf)''\lambda_{gf,h,\mathscr{T_{E}}},&  \mbox{ by a pullback square in Diagram \ref{fig:rlespacdeffig1}}
\end{array}
\]
and the right side is
\[
\begin{array}{ll}
g'g^{*}\gamma\lambda_{g,h,\mathscr{T_E}}f'''\lambda_{f,hg,\mathscr{T_{E}}} & \\
=\gamma g''\lambda_{g,h,\mathscr{T_{E}}}f'''\lambda_{f,hg,\mathscr{T_{E}}}.& \mbox{ by a pullback square in Diagram \ref{fig:rlespacdeffig}}
\end{array}
\]

So Equation \ref{e22} holds if the equality
\begin{equation}\label{e3}
  (gf)''\lambda_{gf,h,\mathscr{T_{E}}}=g''\lambda_{g,h,\mathscr{T_{E}}}f'''\lambda_{f,hg,\mathscr{T_{E}}}
\end{equation}
holds. Codomains of the two morphisms in the equality \ref{e3} are $h^*\mathscr{T_{E}}$, so by a pullback in Diagram \ref{fig:rlespacdeffig}, Equation \ref{e3} holds if and only if the equalities
\begin{equation}\label{e31}
  \pi_{\mathscr{D}}\gamma(gf)''\lambda_{gf,h,\mathscr{T_{E}}}=\pi_{\mathscr{D}}\gamma g''\lambda_{g,h,\mathscr{T_{E}}}f'''\lambda_{f,hg,\mathscr{T_{E}}}
\end{equation}
and
\begin{equation}\label{e32}
  h'(gf)''\lambda_{gf,h,\mathscr{T_{E}}}=h'g''\lambda_{g,h,\mathscr{T_{E}}}f'''\lambda_{f,hg,\mathscr{T_{E}}}
\end{equation}
hold. For Equation \ref{e31} we have,
\[
\begin{array}{ll}
  \pi_{\mathscr{D}}\gamma(gf)''\lambda_{gf,h,\mathscr{T_{E}}} & \\
=gf\pi_{\mathscr{B}}\alpha f^{*}\beta \lambda_{f,g,\mathscr{T_{D}}}(gf)^*\gamma\lambda_{gf,h,\mathscr{T_{E}}}& \mbox{ by a pullback square in Diagram \ref{fig:rlespacdeffig1}}\\
=gf(h(gf))^{*}\pi_{\mathscr{E}}&  \mbox{ by the outer pullback square in Diagram \ref{fig:rlespacdeffig1}}\\
=gf((hg)f)^{*}\pi_{\mathscr{E}}&  \mbox{  by associativity of morphisms in $\textbf{Top}$}\\
=gf\pi_{\mathscr{B}}\alpha f^{*}(\beta g^{*}\gamma\lambda_{g,h,\mathscr{T_{E}}})\lambda_{f,hg,\mathscr{T_{E}}} &  \mbox{ by the outer pullback square in Diagram \ref{fig:rlespacdeffig2}}\\
=g\pi_{\mathscr{C}}f' \beta g^{*}\gamma\lambda_{g,h,\mathscr{T_{E}}}f'''\lambda_{f,hg,\mathscr{T_{E}}} &  \mbox{ by a pullback square in Diagram \ref{fig:rlespacdeffig2}}\\
=\pi_{\mathscr{D}}\gamma g'' \lambda_{g,h,\mathscr{T_{E}}} f'''\lambda_{f,hg,\mathscr{T_{E}}}. &  \mbox{ by a pullback square in Diagram \ref{fig:rlespacdeffig}}\\
\end{array}
\]

For Equation \ref{e32} we have,
\[
\begin{array}{ll}
  h'(gf)''\lambda_{gf,h,\mathscr{T_{E}}} & \\
=(h(gf))'& \mbox{ by a commutative triangle in Diagram \ref{fig:rlespacdeffig1}}\\
=((hg)f)'&  \mbox{ by associativity of morphisms in $\textbf{Top}$}\\
=(hg)'f'''\lambda_{f,hg,\mathscr{T_{E}}}&  \mbox{  by a commutative triangle in Diagram \ref{fig:rlespacdeffig2}}\\
=h'g''\lambda_{g,h,\mathscr{T_{E}}}f'''\lambda_{f,hg,\mathscr{T_{E}}}. &  \mbox{ by a commutative triangle in Diagram \ref{fig:rlespacdeffig}}
\end{array}
\]

This concludes the result.
\end{proof}
At the end, we define a contravariant functor, called the section functor, from the category $\textbf{RLE}_{inv}$ to the category of residuated lattices.
\begin{theorem}\label{maintheoremofart}
The mapping $\mathcal{S}:RLE_{inv} \longrightarrow \textsc{RL}$, sending an $RLE$-space $(\mathscr{B},\mathscr{T_{B}})$ to the residuated lattice $\Gamma(\mathscr{B},\mathscr{T_{B}})$ and an $\textbf{RLE}_{inv}$-morphism $(f,\alpha):(\mathscr{B},\mathscr{T_{B}})\longrightarrow (\mathscr{C},\mathscr{T_{C}})$ to the residuated lattice morphism $\mathcal{S}(f,\alpha):\Gamma(\mathscr{C},\mathscr{T_{C}})\longrightarrow \Gamma(\mathscr{B},\mathscr{T_{B}})$, given by $\mathcal{S}(f,\alpha)(\sigma)(b)=\alpha(b,\sigma(f(b)))$, is a contravariant functor.
\end{theorem}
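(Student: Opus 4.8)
The plan is to verify, in order, that (i) $\mathcal{S}(f,\alpha)$ takes values in $\Gamma(\mathscr{B},\mathscr{T_{B}})$ and is a residuated lattice morphism, and (ii) $\mathcal{S}$ preserves identities and reverses composition. Throughout I would lean on the explicit description of the pullback from the proof of Proposition \ref{pb of es and em}, namely $f^{*}\mathscr{T_{C}}=\{(b,s)\in\mathscr{B}\times\mathscr{T_{C}}\mid f(b)=\pi_{\mathscr{C}}(s)\}$ with $f^{*}\pi_{\mathscr{C}}(b,s)=b$ and $f'(b,s)=s$, together with the fact (Proposition \ref{bunetalresipro}) that sections form residuated lattices under pointwise operations.

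First I would show $\mathcal{S}(f,\alpha)(\sigma)$ is a global section. For $\sigma\in\Gamma(\mathscr{C},\mathscr{T_{C}})$ and $b\in\mathscr{B}$ one has $\pi_{\mathscr{C}}(\sigma(f(b)))=f(b)$, so $(b,\sigma(f(b)))\in f^{*}\mathscr{T_{C}}$; the assignment $b\mapsto(b,\sigma(f(b)))$ is precisely the section of $f^{*}\pi_{\mathscr{C}}$ produced by the universal property of the pullback applied to $1_{\mathscr{B}}$ and $\sigma f$, hence continuous. Since $\pi_{\mathscr{B}}\alpha=f^{*}\pi_{\mathscr{C}}$, composing with the continuous map $\alpha$ yields $\mathcal{S}(f,\alpha)(\sigma)=\alpha\circ\big(b\mapsto(b,\sigma(f(b)))\big)$ continuous with $\pi_{\mathscr{B}}\,\mathcal{S}(f,\alpha)(\sigma)=1_{\mathscr{B}}$, an element of $\Gamma(\mathscr{B},\mathscr{T_{B}})$.

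Next, to see $\mathcal{S}(f,\alpha)$ preserves the operations, the key observation, which I would extract from the commuting square in the proof of Proposition \ref{pb of rles and rles mor}, is that the stalk operations on the pullback satisfy $(b,s_{1})\overset{f^{*}\pi_{\mathscr{C}}}{\diamond}_{b}(b,s_{2})=(b,\ s_{1}\overset{\pi_{\mathscr{C}}}{\diamond}_{f(b)}s_{2})$, so that $f'$ is a stalkwise isomorphism of residuated lattices. Because $\alpha$ is a morphism of \'{e}tal\'{e}s of residuated lattices, Lemma \ref{etalresilattmorp} gives that $\alpha|_{b}$ is a residuated lattice morphism on each stalk, whence $\alpha(b,\ s_{1}\overset{\pi_{\mathscr{C}}}{\diamond}_{f(b)}s_{2})=\alpha(b,s_{1})\overset{\pi_{\mathscr{B}}}{\diamond}_{b}\alpha(b,s_{2})$. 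Taking $s_{1}=\sigma(f(b))$, $s_{2}=\tau(f(b))$ and reading off pointwise shows $\mathcal{S}(f,\alpha)(\sigma\diamond\tau)=\mathcal{S}(f,\alpha)(\sigma)\diamond\mathcal{S}(f,\alpha)(\tau)$; the nullary cases follow identically, since $\alpha|_{b}$ preserves $\overset{\pi}{0}_{b}$ and $\overset{\pi}{1}_{b}$.

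Finally, functoriality. For the identity $(1_{\mathscr{B}},1_{\mathscr{T}_{\mathscr{B}}})$, under the identification $1_{\mathscr{B}}^{*}\mathscr{T_{B}}\cong\mathscr{T_{B}}$ sending $(b,s)\mapsto s$, one computes $\mathcal{S}(1,1)(\sigma)(b)=\sigma(b)$, so identities are preserved. For composition I would unwind $(g,\beta)(f,\alpha)=(gf,\ \alpha\, f^{*}\beta\,\lambda_{f,g,\mathscr{T_{D}}})$ from Definition \ref{rlespacdef}: tracing $(b,s)\in(gf)^{*}\mathscr{T_{D}}$ through the explicit pullbacks gives $\lambda_{f,g,\mathscr{T_{D}}}(b,s)=(b,(f(b),s))$, then $f^{*}\beta(b,(f(b),s))=(b,\beta(f(b),s))$, and finally $\alpha(b,\beta(f(b),s))$. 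Setting $s=\sigma(gf(b))$ yields exactly $\mathcal{S}(f,\alpha)\big(\mathcal{S}(g,\beta)(\sigma)\big)(b)=\alpha\big(b,\beta(f(b),\sigma(gf(b)))\big)$, proving $\mathcal{S}((g,\beta)(f,\alpha))=\mathcal{S}(f,\alpha)\circ\mathcal{S}(g,\beta)$, the required contravariance. The hard part will be this last step: keeping the various pullback projections and the canonical isomorphism $\lambda_{f,g,\mathscr{T_{D}}}$ straight so that the composite second component collapses to the clean formula $\alpha(b,\beta(f(b),-))$; once that concrete description is secured, all three verifications reduce to routine pointwise computations.
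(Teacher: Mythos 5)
Your proof is correct, and its overall decomposition---well-definedness of $\mathcal{S}(f,\alpha)$ on global sections, stalkwise preservation of operations, then identities and composition---matches the paper's. The genuine divergence is in the composition step, which both you and the paper treat as the hard part. The paper establishes the key identity $f^{*}\beta\lambda_{f,g,\mathscr{T_{D}}}(b,\sigma(gf(b)))=(b,\beta(f(b),\sigma(gf(b))))$ (its Equation \eqref{e6}) by a long reduction: it composes with the pullback projections and descends through a tower of equalities (Equations \eqref{e61}, \eqref{e62}, \eqref{e7}, \eqref{e71}, \eqref{e72}), each justified by a commutative triangle or pullback square in Diagram \ref{fig:rlespacdeffig0}. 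You instead fix the canonical concrete model of the pullback, $f^{*}\mathscr{S}=\{(b,s)\mid f(b)=\phi(s)\}$ with coordinate projections---which the paper itself adopts in the proof of Proposition \ref{pb of es and em}, and on which it tacitly relies whenever it justifies a step by ``$f'$ is a projection on the second argument''---and read off $\lambda_{f,g,\mathscr{T_{D}}}(b,s)=(b,(f(b),s))$ and $f^{*}\beta(b,(f(b),s))=(b,\beta(f(b),s))$ directly. This collapses the paper's cascade to a single element-chase; the only care required, which you acknowledge, is to verify that these explicit formulas satisfy the universal-property triangles characterizing $\lambda_{f,g,\mathscr{T_{D}}}$ and $f^{*}\beta$, since those maps are defined only by such conditions (for $\lambda$ one checks both $f^{*}g^{*}\pi_{\mathscr{D}}\lambda=(gf)^{*}\pi_{\mathscr{D}}$ and $g'f''\lambda=(gf)'$, which your formula clearly does). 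Two further points in your favor: your verification that $\mathcal{S}(f,\alpha)(\sigma)$ is a continuous global section (continuity of $b\mapsto(b,\sigma(f(b)))$ via the pullback's universal property applied to $1_{\mathscr{B}}$ and $\sigma f$, plus $\pi_{\mathscr{B}}\alpha=f^{*}\pi_{\mathscr{C}}$) is more explicit than the paper's ``with a little effort''; and your pullback-stalk identity $(b,s_{1})\overset{f^{*}\pi_{\mathscr{C}}}{\diamond}_{b}(b,s_{2})=(b,\,s_{1}\overset{\pi_{\mathscr{C}}}{\diamond}_{f(b)}s_{2})$, extracted from the commuting square in Proposition \ref{pb of rles and rles mor}, is precisely the unstated step in the paper's pointwise computation where it passes from $\vee_{f(b)}$ to $\vee_{b}$ inside $\alpha|_{b}$. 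So the two arguments prove the same identities; the paper's route is more model-independent in style (though not in substance), while yours is shorter and makes the mechanism visible.
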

\begin{proof}
Let $(\mathscr{B},\mathscr{T_{B}})$ be an $RLE$-space. By Definition \ref{bund-etal of RL} $\Gamma(\mathscr{B},\mathscr{T_{B}})$ is a residuated lattice. Let $(f,\alpha):(\mathscr{B},\mathscr{T_{B}})\longrightarrow (\mathscr{C},\mathscr{T_{C}})$ be an $\textbf{RLE}_{inv}$-morphism. For a given global section $\sigma$ of $\mathscr{T_{C}}$, with a little effort, one can see that $\mathcal{S}(f,\alpha)(\sigma)$ is a global section of $\mathscr{T_{B}}$. Let $\sigma$ and $\tau$ be two global section of $\mathscr{T_{C}}$. For an arbitrary $b\in \mathscr{B}$, We have
\[
\begin{array}{ll}
\mathcal{S}(f,\alpha)(\sigma\vee \tau)(b) &=\alpha(b,\sigma\vee \tau(f(b))) \\
&=\alpha(b,\sigma(f(b))\vee_{f(b)} \tau(f(b))) \\
&=\alpha|_{b}(b,\sigma(f(b))\vee_{f(b)} \tau(f(b))) \\
&=\alpha|_{b}(b,\sigma(f(b))\vee_{f(b)} \tau(f(b))) \\
&=\alpha|_{b}((b,\sigma(f(b)))\vee_{b} (b,\tau(f(b)))) \\
&=\alpha|_{b}((b,\sigma(f(b))))\vee_{b} \alpha|_{b}((b,\tau(f(b)))) \\
&=\alpha((b,\sigma(f(b))))\vee_{b} \alpha((b,\tau(f(b)))) \\
&=\mathcal{S}(f,\alpha)(\sigma)(b)\vee_{b} \mathcal{S}(f,\alpha)(\tau)(b)\\
&=(\mathcal{S}(f,\alpha)(\sigma)\vee \mathcal{S}(f,\alpha)(\tau))(b).
\end{array}
\]

Analogously, we can show that the function $\mathcal{S}(f,\alpha)$ preserves the other fundamental operations, and so it is an $RL$-morphism. Also, it is obvious that $\mathcal{S}(1_{\mathscr B},1_{\mathscr{T}_{\mathscr B}})=1_{\Gamma(\mathscr{B},\mathscr{T_{B}})}$. Next we show that $\mathcal{S}$ preserves the reverse composition, let the $\textbf{RLE}_{inv}$-morphisms
\[\xymatrix{(\mathscr{B},\mathscr{T_{B}}) \ar[r]^{(f,\alpha)} & (\mathscr{C},\mathscr{T_{C}}) \ar[r]^{(g,\beta)} & (\mathscr{D},\mathscr{T_{D}})}\]
be given. To show that $\mathcal{S}((g,\beta)(f,\alpha))=\mathcal{S}(f,\alpha)\mathcal{S}(g,\beta)$, we need to show for an arbitrary section $\sigma$ of $\mathscr{T_{D}}$ and $b\in \mathscr{B}$,
\begin{equation}\label{e5}
  \mathcal{S}((g,\beta)(f,\alpha))(\sigma)(b)=(\mathcal{S}(f,\alpha)\mathcal{S}(g,\beta))(\sigma)(b)
\end{equation}

By definition we have,
\[
\begin{array}{ll}
\mathcal{S}((g,\beta)(f,\alpha))(\sigma)(b) &=\mathcal{S}(gf,\alpha f^{*}\beta\lambda_{f,g,\mathscr{T_{D}}})(\sigma)(b) \\
&=\alpha f^{*}\beta\lambda_{f,g,\mathscr{T_{D}}}(b,\sigma(gf(b)))
\end{array}
\]
and
\[
\begin{array}{ll}
(\mathcal{S}(f,\alpha)\mathcal{S}(g,\beta))(\sigma)(b) &=\mathcal{S}(f,\alpha)(\mathcal{S}(g,\beta)(\sigma))(b) \\
&=\alpha(b,(\mathcal{S}(g,\beta)(\sigma)(f(b))))\\
&=\alpha(b,\beta(f(b),\sigma(gf(b))))\\
\end{array}
\]

So the equation \ref{e5} holds if the equality
\begin{equation}\label{e6}
  f^{*}\beta\lambda_{f,g,\mathscr{T_{D}}}(b,\sigma(gf(b)))=(b,\beta(f(b),\sigma(gf(b))))
\end{equation}
holds. By a pullback in Diagram \ref{fig:rlespacdeffig0}, Equation \ref{e6} holds if and only if equalities
\begin{equation}\label{e61}
  f^{*}\pi_{\mathscr{C}}f^{*}\beta\lambda_{f,g,\mathscr{T_{D}}}(b,\sigma(gf(b)))=f^{*}\pi_{\mathscr{C}}(b,\beta(f(b),\sigma(gf(b))))
\end{equation}
and
\begin{equation}\label{e62}
 f'f^{*}\beta\lambda_{f,g,\mathscr{T_{D}}}(b,\sigma(gf(b)))=f'(b,\beta(f(b),\sigma(gf(b))))
\end{equation}
hold. For Equation \ref{e61} we have,
\[
\begin{array}{ll}
 f^{*}\pi_{\mathscr{C}}f^{*}\beta\lambda_{f,g,\mathscr{T_{D}}}(b,\sigma(gf(b))) & \\
=(gf)^{*}\pi_{\mathscr{D}}(b,\sigma(gf(b)))& \mbox{by a commutative triangle in Diagram \ref{fig:rlespacdeffig0}}\\
=b&  \mbox{$(gf)^{*}\pi_{\mathscr{D}}$ is a projection on first argument}\\
=f^{*}\pi_{\mathscr{C}}(b,\beta(f(b),\sigma(gf(b)))).&  \mbox{$f^{*}\pi_{\mathscr{C}}$ is a projection on first argument}
\end{array}
\]

For Equation \ref{e62} the left side is
\[
\begin{array}{ll}
  f'f^{*}\beta\lambda_{f,g,\mathscr{T_{D}}}(b,\sigma(gf(b))) & \\
 =\beta f'' \lambda_{f,g,\mathscr{T_{D}}}(b,\sigma(gf(b))),& \mbox{ by a pullback square in Diagram \ref{fig:rlespacdeffig0}}
\end{array}
\]
and the right side is
\[
\begin{array}{ll}
  f'(b,\beta(f(b),\sigma(gf(b)))) & \\
 =\beta(f(b),\sigma(gf(b))).& \mbox{$f'$ is a projection on second argument}
\end{array}
\]

So Equation \ref{e62} holds if the equality
\begin{equation}\label{e7}
  f'' \lambda_{f,g,\mathscr{T_{D}}}(b,\sigma(gf(b)))=(f(b),\sigma(gf(b)))
\end{equation}
holds. By a pullback in Diagram \ref{fig:rlespacdeffig0}, Equation \ref{e7} holds if and only if equalities
\begin{equation}\label{e71}
  g^{*}\pi_{\mathscr{D}}f'' \lambda_{f,g,\mathscr{T_{D}}}(b,\sigma(gf(b)))=g^{*}\pi_{\mathscr{D}}(f(b),\sigma(gf(b)))
\end{equation}
and
\begin{equation}\label{e72}
 g'f'' \lambda_{f,g,\mathscr{T_{D}}}(b,\sigma(gf(b)))=g'(f(b),\sigma(gf(b)))
\end{equation}
hold.

For Equation \ref{e71} we have,
\[
\begin{array}{ll}
 g^{*}\pi_{\mathscr{D}}f'' \lambda_{f,g,\mathscr{T_{D}}}(b,\sigma(gf(b))) & \\
=f(gf)^{*}\pi_{\mathscr{D}}(b,\sigma(gf(b)))& \mbox{by a pullback square in Diagram \ref{fig:rlespacdeffig0}}\\
=f(b)&  \mbox{$(gf)^{*}\pi_{\mathscr{D}}$ is a projection on the first argument}\\
=g^{*}\pi_{\mathscr{D}}(f(b),\sigma(gf(b))).&  \mbox{$g^{*}\pi_{\mathscr{D}}$ is a projection on first argument}
\end{array}
\]

For Equation \ref{e72} we have
\[
\begin{array}{ll}
  g'f'' \lambda_{f,g,\mathscr{T_{D}}}(b,\sigma(gf(b))) & \\
 =(gf)'(b,\sigma(gf(b)))& \mbox{ by a commutative triangle in Diagram \ref{fig:rlespacdeffig0}}\\
 =\sigma(gf(b))& \mbox{$(gf)'$ is a projection on the second argument}\\
 =g'(f(b),\sigma(gf(b))).& \mbox{$g'$ is a projection on the second argument}
\end{array}
\]
This concludes the result.
\end{proof}



\end{document}